\newcommand\ba{\begin{align*}}
\newcommand\ea{\end{align*}}
\newcommand\be{\begin{enumerate}}
\newcommand\ee{\end{enumerate}}
\newcommand\bp{\begin{proof}}
\newcommand\ep{\end{proof}}
\newcommand\bpp{\begin{prop}}
\newcommand\epp{\end{prop}}
\newcommand\bpb{\begin{prob}}
\newcommand\epb{\end{prob}}
\newcommand\bd{\begin{defn}}
\newcommand\ed{\end{defn}}
\newcommand\bh{\begin{hint}}
\newcommand\eh{\end{hint}}
\newcommand\Q{\mathbb{Q}}
\newcommand\R{\mathbb{R}}
\newcommand\Z{\mathbb{Z}}
\newcommand\bH{\mathbb{H}}
\newcommand\CC{\mathcal{C}}
\newcommand\Sym{\operatorname{Sym}}
\newcommand\supp{\operatorname{supp}}
\newcommand\gam{\Gamma}
\newcommand\Mod{\operatorname{Mod}}
\newcommand\PSL{\operatorname{PSL}}
\newcommand\SL{\operatorname{SL}}
\DeclareMathOperator\Homeo{Homeo}
\newcommand\yt{\widetilde}
\DeclareMathOperator\Fix{Fix}
\DeclareMathOperator\Out{Out}
\DeclareMathOperator\Aut{Aut}
\DeclareMathOperator\Diff{Diff}
\def\thetitle{{Actions of right-angled Artin groups in low dimensions}}
\def\theauthors{{Thomas Koberda}}
\theoremstyle{theorem}
\newtheorem{thm}{Theorem}[section]
\newtheorem{lem}[thm]{Lemma}
\newtheorem{cor}[thm]{Corollary}
\newtheorem{prop}[thm]{Proposition}
\newtheorem{que}[thm]{Question}
\newtheorem*{claim*}{Claim}
\theoremstyle{remark}
\newtheorem{rem}[thm]{Remark}
\theoremstyle{definition}
\newtheorem{defn}[thm]{Definition}
\newtheorem{prob}{Problem}[section]
\begin{document}
\title\thetitle
\date{\today}
\keywords{}

% author information
\author[T. Koberda]{Thomas Koberda}
\address{Department of Mathematics, University of Virginia, Charlottesville, VA 22904-4137, USA}
\email{thomas.koberda@gmail.com}
\urladdr{http://faculty.virginia.edu/Koberda}

\begin{abstract}
We survey the role of right-angled Artin groups in the theory of diffeomorphism groups of low dimensional manifolds. We first describe some of the subgroup structure of right-angled Artin groups. We then discuss the interplay between algebraic structure, compactness, and regularity for group actions on one--dimensional manifolds. For compact one--manifolds, every right-angled Artin group acts faithfully by $C^1$ diffeomorphisms, but the right-angled Artin groups which act faithfully by $C^2$ diffeomorphisms are very restricted. For the real line, every right-angled Artin group acts faithfully by $C^{\infty}$ diffeomorphisms, though analytic actions are again more limited. In dimensions two and higher, every right-angled Artin group acts faithfully on every manifold by $C^{\infty}$ diffeomorphisms. We give applications of this discussion to mapping class groups of surfaces and related groups.
\end{abstract}

\maketitle

%\tableofcontents

%%%%%%%%%%%%%%%%%%%%%%%%%%%
% START of body
%%%%%%%%%%%%%%%%%%%%%%%%%%%
%%!TEX TS-program = pdflatex 
%!TEX root = ./main.tex

\tableofcontents

\section{Introduction}\label{sec:intro}

Let $\Gamma$ be a finite simplicial graph, with vertex set $V=V(\Gamma)$ and edge set $E=E(\gam)$. The right-angled Artin group $A(\gam)$ on $\gam$ is defined to be the group \[A(\gam)=\langle V\mid [v,w]=1 \textrm{ if and only if } \{v,w\}\in E\rangle.\]

In recent years, right-angled Artin groups have been of central importance in geometric group theory (see~\cite{Charney2007,KoberdaNotes}, for some surveys). Among the general reasons for this is the fact that right-angled Artin groups admit so many incarnations. For one, they are a prototypical example of a CAT(0) group, and thus play a central role in the geometry of non-positively curved groups. The work of many authors over many years has illustrated deep connections between CAT(0) geometry and hyperbolic geometry. This culminated in Agol's resolution of the virtual Haken and virtual fibering conjecture~\cite{Agol2008,Agol2013}, in which right-angled Artin groups played a key role. Moreover, right-angled Artin groups exhibit remarkable ubiquity, arising as subgroups of many other groups of classical interest to geometric group theorists such as Coxeter groups, Artin groups, mapping class groups, and diffeomorphism groups of manifolds.

This ubiquity of right-angled Artin groups has aided the study of other classes of groups, because right-angled Artin groups are simultaneously complicated enough to admit a rich structure and also simple enough to be amenable to study.

This article serves as a survey of the role of right-angled Artin groups in the study of diffeomorphism groups of manifolds, concentrating on dimension one. While the author strives to be a good scholar, the results discussed herein are undoubtedly colored by the author's personal tastes, which center around the interplay of algebraic structure, dimension, regularity, and compactness. Only a limited number of proofs or sketches of proofs has been given, though the author strives to include detailed references. When given, proofs follow the original source material, unless otherwise noted.

\section{Terminology}

In this section we collect some basic terminology concerning right-angled Artin groups and diffeomorphism groups which we will require. Most of this terminology is standard, and we include this section for reference. We will identify vertices of $\gam$ with generators of $A(\gam)$. An element $w\in A(\gam)$ is called \emph{reduced} if it is a shortest element in the free group on $V$ which is equal to $w$ in $A(\gam)$. By a result of Hermiller--Meier~\cite{hm1995}, an element of $A(\gam)$ is reduced if and only if one cannot shorten the length of $w$ by switching the order of vertices which are adjacent in $\gam$ and by applying free reductions.

By a \emph{subgraph} $\Lambda$ of a graph $\gam$, we always mean a \emph{full subgraph}, which is to say if $v,w\in V(\Lambda)$, then $\{v,w\}\in E(\Lambda)$ if and only if $\{v,w\}\in E(\gam)$. We will often write $\Lambda\subset\gam$ in this case.

Let $\gam$ be a simplicial graph. The \emph{complement} $\gam^c$ of $\gam$ is the graph whose vertices are the vertices of $\gam$ and whose edges are exactly the non--edges of $\gam$. A \emph{join} $J$ is a simplicial graph for which there exist nonempty subgraphs $J_1,J_2\subset J$ such that \[J=\left(J_1^c\coprod J_2^c\right)^c.\]

If $\gam$ is a (not necessarily finite) simplicial graph, we write $\gam_k$ for the \emph{clique graph} of $\gam$. The vertices of $\gam_k$ are nonempty complete subgraphs of $\gam$, and two vertices $K_1$ and $K_2$ of $\gam_k$ are adjacent if $K_1\cup K_2$ also span a complete subgraph of $\gam$. Note that $\gam$ is naturally a subgraph of $\gam_k$.

If $w\in A(\gam)$ is freely reduced, the \emph{support} of $w$ is denoted $\supp w$ and is defined to be the smallest subgraph $\Lambda$ of $\gam$ such that $w\in A(\Lambda)$.

A freely reduced element $w\in A(\gam)$ is \emph{cyclically reduced} if $w$ is shortest in its conjugacy class in $A(\gam)$.

Let $X$ and $Y$ be metric spaces. A function $f\colon X\to Y$ is called a ($K,C$)--\emph{quasi--isometric embedding} if there exist constants $K\geq 1$ and $C\geq 0$ such that for all $a,b\in X$, we have \[\frac{1}{K}\cdot d_X(a,b)-C\leq d_Y(f(a),f(b))\leq K\cdot d_X(a,b)+C.\] The map $f$ is called a \emph{quasi--isometry} if moreover it is \emph{quasi--surjective}, which is to say that there is a constant $D\geq 0$ such that for all $y\in Y$ there exists an $x\in X$ such that $d_Y(f(x),y)\leq D$. A \emph{quasi--geodesic} is the image of a geodesic under a quasi--isometric embedding.

If $M$ is an orientable manifold, we will write $\Homeo^+(M)$ for the group of orientation--preserving homeomorphisms of $M$. For $1\leq k\leq\infty$, we write $\Diff^k(M)$ for the group of orientation--preserving $C^k$ diffeomorphisms of $M$.

Let $X$ be a set. We write $\Sym(X)$ for the set of bijections $X\to X$. The \emph{support} of an element $\phi\in\Sym(X)$ is defined to be the set of points $x\in X$ such that $\phi(x)\neq x$, and is denoted $\supp\phi$. The \emph{fixed set} of $\phi$ is written $\Fix\phi$ and is defined to be $X\setminus\supp\phi$.

If $S$ is an orientable surface, we write $\Mod(S)$ for the mapping class group of $S$, i.e. the group of orientation--preserving homeomorphisms of $S$ considered up to homotopy.

\section{Some subgroups of right-angled Artin groups}

From an algebraic point of view, right-angled Artin groups are interesting because of their rich array of subgroups. In this section, we give a brief summary of some of the main classes of subgroups of right-angled Artin groups (see~\cite{KoberdaNotes} for a more detailed discussion).

\subsection{Solvable groups}\label{subsec:solvable}

The structure of solvable subgroups of right-angled Artin groups is tightly restricted by CAT(0) geometry (see Subsection~\ref{subsec:vspecial} below). Specifically, the Flat Torus Theorem (see~\cite{BH1999}) implies that every solvable subgroup of a right-angled Artin group is abelian. Precisely, we have the following:

\begin{prop}
Let $G<A(\gam)$ be a solvable subgroup. Then $G\cong\Z^n$, where $n$ is bounded above by the size of the largest complete subgraph of $\gam$. This bound is sharp.
\end{prop}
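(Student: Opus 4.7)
The plan is to leverage the CAT(0) geometry of the Salvetti complex associated to $A(\Gamma)$. Recall that $A(\Gamma)$ acts properly, freely, and cocompactly by isometries on the universal cover $\widetilde{S}(\Gamma)$ of its Salvetti complex, a nonpositively curved cube complex whose dimension equals $\omega(\Gamma)$, the size of a largest clique in $\Gamma$. In particular $A(\Gamma)$ is torsion--free, and every isometrically embedded flat in $\widetilde{S}(\Gamma)$ has dimension at most $\omega(\Gamma)$. Sharpness of the bound is then immediate: if $K \subseteq \Gamma$ is a clique on $\omega(\Gamma)$ vertices, the inclusion $K \hookrightarrow \Gamma$ induces an embedding $A(K) \cong \Z^{\omega(\Gamma)} \hookrightarrow A(\Gamma)$.

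For the main assertion, I would first apply the Solvable Subgroup Theorem (Bridson--Haefliger II.7.8), a standard consequence of the Flat Torus Theorem, to the subgroup $G \leq A(\Gamma)$. This asserts that $G$ is finitely generated and virtually $\Z^n$ for some $n$, and that a finite--index $\Z^n \leq G$ stabilizes an embedded flat $\R^n \subseteq \widetilde{S}(\Gamma)$. The bound on the dimension of flats then forces $n \leq \omega(\Gamma)$, and torsion--freeness of $A(\Gamma)$ passes to $G$.

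The step I expect to be the main obstacle is upgrading ``virtually $\Z^n$'' to $\Z^n$ itself, since CAT(0) geometry alone does not rule out non--abelian torsion--free crystallographic subgroups such as the Klein bottle group. To close this gap I would invoke bi--orderability of $A(\Gamma)$ (Duchamp--Krob): in a bi--ordered group, any order--preserving automorphism of finite order must be the identity, because $\phi(a) > a$ iterates to $\phi^p(a) > a$, contradicting $\phi^p = \mathrm{id}$. Applied to the conjugation action of the finite quotient $G / \Z^n$ on the bi--ordered subgroup $\Z^n$, this forces the action to be trivial, so $\Z^n$ is central in $G$. Then $G$ is central--by--finite, Schur's theorem makes $[G,G]$ finite, and torsion--freeness forces $[G,G] = 1$; hence $G$ is a finitely generated torsion--free abelian group, i.e.\ $\Z^n$. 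A cleaner alternative, also available, is Baudisch's theorem that every two--generator subgroup of $A(\Gamma)$ is free of rank at most two or free abelian: solvability of $G$ forbids non--abelian free subgroups, so every pair of elements in $G$ commutes and $G$ is abelian with no further argument, after which the rank bound follows from the flat argument as above.
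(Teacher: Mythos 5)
Your argument is correct, and it is worth noting that the paper itself offers no proof of this proposition --- it simply attributes the statement to the Flat Torus Theorem. Your route is the intended one (the Solvable Subgroup Theorem applied to the proper cocompact action of $A(\gam)$ on the CAT(0) universal cover of the Salvetti complex, whose dimension is the clique number), and you correctly identify the point that this machinery alone leaves open: it yields only that $G$ is finitely generated, torsion--free, and \emph{virtually} $\Z^n$, which does not rule out, say, the Klein bottle group, itself a torsion--free solvable CAT(0) group. Your repair via bi--orderability is sound: after replacing the finite--index $\Z^n$ by its normal core (still free abelian of the same rank), conjugation gives finite--order, order--preserving automorphisms of a bi--ordered group, which must be trivial, and Schur's theorem plus torsion--freeness then forces $G$ abelian; the rank bound follows from the dimension of the flat. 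The alternative you mention --- quoting Baudisch's theorem, which the paper states a few lines later, to conclude that every two--generated subgroup of a solvable $G$ is abelian --- is cleaner still and closer in spirit to the surrounding text; either way your write--up supplies a step the paper glosses over.
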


\subsection{Free groups and convex cocompactness}

Free subgroups of right-angled Artin groups are extremely common. Much like the structure of solvable subgroups, the ubiquity of free subgroups of right-angled Artin groups is dictated by CAT(0) geometry. CAT(0) groups enjoy a \emph{strong Tits alternative}, which is illustrated by the following result of Sageev--Wise:

\begin{thm}[\cite{SageevWise05}]\label{thm:sagwise05}
Let $G$ be a group acting properly on a finite dimensional CAT(0) cube complex. Suppose that there is a uniform bound on the size of finite subgroups of $G$. Then either $G$ contains a nonabelian free group, or $G$ is finitely generated and virtually abelian.
\end{thm}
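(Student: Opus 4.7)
The plan is to exploit the combinatorial geometry of hyperplanes in the finite dimensional CAT(0) cube complex $X$ on which $G$ acts properly, combined with a ping-pong argument. First I would pass to the essential core of $X$: discarding halfspaces whose $G$-orbits stay bounded, I may assume that for every hyperplane $H$, both halfspaces bounded by $H$ contain $G$-translates of hyperplanes arbitrarily far from $H$. Properness survives this reduction, as does the uniform bound on the orders of finite subgroups.

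The heart of the argument is a dichotomy. Either there exist two hyperplanes $H_1, H_2$ and elements $g_1, g_2 \in G$ such that each $g_i$ skewers $H_i$ (sends one halfspace of $H_i$ strictly inside itself), and the two skewering configurations are sufficiently transverse, or no such configuration exists. In the first case, after replacing $g_1, g_2$ by high powers, I would set up disjoint pairs of halfspaces around the translation axes of $g_1$ and $g_2$ which get thrown into one another, and then Klein's ping-pong lemma produces a nonabelian free subgroup of $G$ generated by $g_1$ and $g_2$.

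In the second case, the action is extremely rigid: the absence of transverse skewering pairs forces all translation axes of loxodromic elements of $G$ to assemble into a common invariant flat subcomplex of $X$. The Flat Torus Theorem of \cite{BH1999} then takes over, and combined with properness and the uniform bound on finite subgroups, one extracts a finite-index subgroup of $G$ which acts freely and cocompactly on a Euclidean space, hence is isomorphic to $\Z^n$ for some $n$ bounded by the dimension of $X$. In particular $G$ is finitely generated and virtually abelian.

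The hard part will be executing the first horn of the dichotomy rigorously. Specifically, producing a pair of skewering elements transverse enough for Klein's lemma to apply is the genuine content of the argument, and demands a careful analysis of orbits of hyperplanes in a finite dimensional CAT(0) cube complex; the finite dimensionality is what prevents pathological behavior and, combined with essentiality, is what forces the existence of sufficiently independent hyperbolic isometries unless the rigid alternative holds. Once the transverse pair is in place, the ping-pong step and the passage from rigidity to virtual abelianness are comparatively standard applications of classical group theory and CAT(0) structure theorems.
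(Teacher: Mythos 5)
First, note that the paper does not prove this statement: it is quoted verbatim from Sageev--Wise \cite{SageevWise05}, so there is no in-paper argument to compare against. Your outline does track the broad strategy of the original proof (a dichotomy between finding two ``independent'' skewering isometries that play ping-pong on halfspaces, versus a degenerate case that forces virtual abelianness), but as written it is a plan rather than a proof, and both horns of the dichotomy have genuine gaps.

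In the first horn, you explicitly defer the construction of the transverse skewering pair, and that deferral is precisely where the theorem lives: Sageev--Wise obtain it via an induction on dimension and a careful analysis of ``facing'' configurations of hyperplanes, and finite dimensionality enters exactly there; nothing in your sketch explains how essentiality plus finite dimension produces the two hyperplanes, nor why some element of $G$ skewers anything at all (an infinite group can act properly with every element elliptic on the ambient complex if the complex is not locally finite, and you must rule this out using properness together with the uniform bound on finite subgroups). In the second horn, the claim that the absence of a transverse skewering pair forces all axes into a single invariant flat is an unproved rigidity assertion of roughly the same depth as the theorem itself; moreover the Flat Torus Theorem presupposes a free abelian subgroup acting by semisimple isometries, so it cannot be ``taken over'' until you have already produced one. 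Finally, the conclusion that $G$ is \emph{finitely generated} is part of what must be proved ($G$ is not assumed finitely generated), and this requires showing the action on the putative flat is cocompact with finite kernel --- the latter being the actual place the bound on finite subgroups is used --- none of which is addressed. So the architecture is right, but the mathematical content of the theorem is contained in the steps you have labeled as ``the hard part'' and left unexecuted.
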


Again, the reader is directed to Subsection~\ref{subsec:vspecial} below for relevant definitions in the statement of Theorem~\ref{thm:sagwise05}.

For right-angled Artin groups, one can be even more precise. The following is a classical result of Baudisch~\cite{Baudisch1981}, which has since been reproved by a number of authors (see~\cite{CarrThesis,KK2015}, cf.~\cite{AFW2015}):

\begin{thm}
Let $G<A(\gam)$ be a two--generated subgroup. Then $G$ is either abelian or free.
\end{thm}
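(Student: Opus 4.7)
The plan is to dichotomize on whether the commutator $[a,b]$ is trivial in $A(\gam)$, where $a,b$ are the two generators of $G$. If $[a,b]=1$, then $G$ is abelian, hence solvable, and Proposition 3.1 above immediately yields $G\cong\Z^n$ for some $n\leq 2$; this is abelian, as desired.

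Assume instead that $[a,b]\neq 1$; the goal is now to show $G\cong F_2$. After conjugating $G$ by a single element of $A(\gam)$---which does not change its isomorphism type---I would arrange that $a$ is cyclically reduced, and then analyze $b$ according to how its support sits relative to $\supp a$. The workhorse is the Hermiller--Meier characterization of reduced words together with Servatius's description of centralizers in $A(\gam)$: the latter implies that $[a,b]\neq 1$ forces the existence of a vertex $v\in\supp a$ and a vertex $u\in\supp b$ that are not adjacent in $\gam$. The aim is then to show that for every nonempty freely reduced word $w=a^{n_1}b^{m_1}\cdots a^{n_k}b^{m_k}$ in the abstract free group on $\{a,b\}$, the image of $w$ is nontrivial in $A(\gam)$. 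One computes the Hermiller--Meier normal form of $w$ step by step: when multiplying a normal form of $a^{n_i}$ against one of $b^{m_i}$, the noncommuting pair $(u,v)$ obstructs complete cancellation, so a genuine residue of $a^{n_i}$ remains adjacent to $b^{m_i}$, and iterating produces a visibly nontrivial normal form for $w$.

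The main obstacle is the cancellation bookkeeping under iterated multiplication: one must show not just that a single residue survives each multiplication, but that these residues cannot telescope away over the length of $w$ after repeated commutation swaps. A cleaner geometric organization is to work on the universal cover of the Salvetti complex, a CAT(0) cube complex on which $A(\gam)$ acts freely and cocompactly; there $a$ and $b$ act as hyperbolic isometries whose axes cannot share a common flat once $[a,b]\neq 1$. The Sageev--Wise strong Tits alternative (Theorem~\ref{thm:sagwise05}) immediately yields a nonabelian free subgroup of $G$, so the real content is to upgrade this to a free basis $\{a,b\}$ for $G$ itself. That upgrade requires the additional graph-theoretic structure of $\gam$---in particular, a precise understanding of which hyperplanes are crossed by the axes of $a$ and $b$---and is the step I expect to consume most of the effort.
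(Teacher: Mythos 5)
The paper does not prove this theorem; it is quoted as a classical result of Baudisch, with references to later reproofs (Carr, Kim--Koberda). So the only question is whether your argument stands on its own, and it does not: the entire content of the theorem is concentrated in the step you defer. Your reduction is correct as far as it goes --- if $[a,b]=1$ the group is abelian, and if $[a,b]\neq 1$ it suffices (by the Hopfian property of free groups) to show that $\{a,b\}$ is a free basis, i.e.\ that no nonempty reduced word in $a^{\pm1},b^{\pm1}$ dies in $A(\gam)$. But the assertion that ``the noncommuting pair $(u,v)$ obstructs complete cancellation, so a genuine residue of $a^{n_i}$ remains adjacent to $b^{m_i}$, and iterating produces a visibly nontrivial normal form'' is precisely the theorem, not a proof of it. Under the Hermiller--Meier moves, the letters that survive one multiplication can shuffle past intervening syllables and cancel against letters several syllables away, so one needs an inductive invariant (Baudisch's original argument, or the ping-pong set-ups in the later proofs) certifying that some letter persists globally. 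You name this as ``the main obstacle'' and leave it unresolved, which means the nonabelian case is not established. Two smaller points: a single conjugation cannot in general cyclically reduce $a$ while keeping control of $b$; and Sageev--Wise (Theorem~\ref{thm:sagwise05}) only produces a nonabelian free \emph{subgroup} of $G$ (after ruling out the virtually abelian alternative), which is far from showing $G$ itself is free --- as you acknowledge, the ``upgrade'' is again the whole theorem.

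If you want a self-contained route, the cleanest modern arguments either (i) run a careful syllable-length induction showing that in a reduced word $a^{n_1}b^{m_1}\cdots$ a fixed non-commuting vertex pair survives every Hermiller--Meier reduction, with the induction hypothesis tracking not just one surviving letter but the whole ``interface'' between consecutive syllables, or (ii) use the action of $A(\gam)$ on the extension graph $\gam^e$ (or a suitable tree/quasi-tree) and play ping-pong with high powers, then bootstrap from $\langle a^N,b^N\rangle$ free back to $\langle a,b\rangle$ --- and that last bootstrapping step itself requires an argument. Either way, a genuine combinatorial or dynamical lemma must be proved; as written, your proposal is a plan rather than a proof.
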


There are many different ways in which free groups can sit inside of right-angled Artin groups. Of particular interest are free subgroups which play the role of \emph{convex cocompact subgroups}, by analogy to mapping class groups (see~\cite{MR1914566,KL2008,Ham2005}). We will not give a detailed discussion of these subgroups nor the analogy with mapping class groups here, since it would take us rather far afield. We will just state one result about them. An element $1\neq w\in A(\gam)$ is called \emph{loxodromic} if after cyclic reduction, $\supp w$ is not contained in a subjoin of $\gam$ (see~\cite{BC2012}, cf.~\cite{KK2013b}). If $|V(\gam)|\geq 2$ and $\gam$ is connected (which we will assume for the rest of this subsection), loxodromic elements are exactly the elements with cyclic centralizer in $A(\gam)$. A subgroup $G<A(\gam)$ is \emph{purely loxodromic} if every non--identity element of $G$ is loxodromic.

In the following result due to the author jointly with J. Mangahas and S. Taylor~\cite{KMT2014}, the graph $\gam^e$ is the \emph{extension graph} of $\gam$ (see Subsection~\ref{subsec:raag} below).

\begin{thm}\label{thm:ploxo}
Let $G<A(\gam)$ be a finitely generated subgroup. The following are equivalent:
\begin{enumerate}
\item
The group $G$ is purely loxodromic;
\item
The orbit map $G\to\gam^e$ is a quasi--isometric embedding;
\item
The group $G$ is stable.
\end{enumerate}

Moreover, the group $G$ is free.
\end{thm}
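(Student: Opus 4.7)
The plan is to model the argument on the Farb-Mosher-Hamenstadt theory of convex cocompact subgroups of mapping class groups, with the extension graph $\gam^e$ playing the role of the curve complex. The three essential inputs are: (a) $\gam^e$ is Gromov hyperbolic; (b) an element $w\in A(\gam)$ acts loxodromically on $\gam^e$ if and only if $w$ is loxodromic in the sense defined above (so non-loxodromic nontrivial elements act elliptically with bounded orbits); and (c) the action of $A(\gam)$ on $\gam^e$ is acylindrical. All three are theorems of Kim-Koberda.

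The implication (2) $\Rightarrow$ (1) is the easy direction and proceeds by contraposition: if $1\neq w\in G$ is non-loxodromic, then by (b) the orbit of $\langle w\rangle$ in $\gam^e$ is bounded, while the word length of $w^n$ in $A(\gam)$ grows linearly in $n$, contradicting the QI embedding hypothesis. For (1) $\Rightarrow$ (2), I would combine acylindricity of the action with pointwise loxodromicity of the elements of $G$. A Bowditch-style argument, or equivalently a projection-axioms construction in the style of Bestvina-Bromberg-Fujiwara, upgrades pointwise positive stable translation length to a uniform positive lower bound across $G$; a standard Morse-lemma argument in the hyperbolic space $\gam^e$ then produces the QI embedding.

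For the equivalence (2) $\Leftrightarrow$ (3), I would exploit the principle that QI embeddings into a Gromov hyperbolic space yield the Morse property, hence stability: in a hyperbolic space, quasi-geodesics are uniformly Morse, so pulling back Morse quasi-geodesics in $\gam^e$ through the orbit map yields the Morse property for quasi-geodesics in the Cayley graph of $A(\gam)$ with endpoints in $G$, which is the definition of stability. Conversely, a stable subgroup is undistorted and its quasi-geodesics behave in a rank-one fashion in $A(\gam)$, and combined with the coarse Lipschitz nature of the orbit map, this forces a QI embedding into $\gam^e$. Finally, freeness of $G$ is established by a ping-pong argument on the Gromov boundary $\partial\gam^e$ (using loxodromicity and north-south dynamics) together with Baudisch's theorem that every $2$-generator subgroup of $A(\gam)$ is abelian or free; pure loxodromicity forces every abelian subgroup of $G$ to be cyclic, since loxodromic elements in $A(\gam)$ have cyclic centralizer.

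The main obstacle is the implication (1) $\Rightarrow$ (2): promoting the pointwise condition that every element is loxodromic to the global geometric condition of a QI-embedded orbit. Pointwise loxodromicity by itself gives no uniform lower bound on stable translation lengths across $G$, nor does it control how words in $G$ project to $\gam^e$. Supplying this uniformization is exactly where acylindricity of the action, together with the specific combinatorial structure of the extension graph, will do the essential work.
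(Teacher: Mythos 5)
The paper itself gives no proof of Theorem~\ref{thm:ploxo}; it is quoted from \cite{KMT2014}, so your proposal has to be measured against the argument there. Your inputs (a)--(c) are all correct (hyperbolicity of $\gam^e$, the dynamical characterization of loxodromic elements, and acylindricity are all due to Kim--Koberda), and your direction $(2)\Rightarrow(1)$ is the standard easy one. The problem is the core implication $(1)\Rightarrow(2)$, which you correctly identify as the main obstacle but then propose to dispatch with acylindricity plus a Bowditch/Bestvina--Bromberg--Fujiwara uniformization. This does not work. What acylindricity buys you (Bowditch, Delzant, Osin) is a uniform positive lower bound on the stable translation lengths of \emph{all} loxodromic elements of the ambient group acting on the hyperbolic space; it does not upgrade ``every element of $G$ is loxodromic'' to ``the orbit map of $G$ is a quasi-isometric embedding.'' If it did, the identical argument applied to the mapping class group acting on the curve complex (also acylindrical, by Bowditch) would prove that every finitely generated purely pseudo-Anosov subgroup is convex cocompact --- a famous open problem. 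So the step where you claim a ``standard Morse-lemma argument then produces the QI embedding'' from a uniform translation-length bound is a genuine gap: a uniform bound controls powers of single elements, not arbitrary words in a generating set, and gives no control on how geodesics of $G$ project to $\gam^e$.

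The actual proof in \cite{KMT2014} replaces this formal argument with a combinatorial one specific to right-angled Artin groups: they introduce the intermediate condition that $G$ is \emph{join-busting} (there is an $N$ such that every join subword of a reduced word representing an element of $G$ has length at most $N$), prove $(1)\Rightarrow$ join-busting by a delicate normal-form/disk-diagram analysis, and then derive both the quasi-isometric embedding of the orbit and stability from join-busting. This is exactly the ``specific combinatorial structure'' you allude to at the end, but it is the entire content of the theorem, not a supplement to acylindricity. Two smaller points: your $(2)\Leftrightarrow(3)$ sketch also needs a contraction or bounded-geodesic-image statement relating geodesics of $A(\gam)$ to $\gam^e$ (in \cite{KMT2014} this again comes from join-busting), and your freeness argument via ping-pong plus Baudisch only controls two-generated subgroups; the efficient route is that a quasi-isometric embedding of $G$ into the quasi-tree $\gam^e$ forces the Cayley graph of $G$ to be a quasi-tree, so $G$ is virtually free, hence free since right-angled Artin groups are torsion-free.
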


Here, a finitely generated subgroup $H$ of a finitely generated group $G$ is \emph{stable} if $K$--quasi--geodesics in $G$ between points of $H$ are uniformly bounded distance from each other. M. Durham and S. Taylor~\cite{DurhamTaylor2015} developed concept and proved that stable subgroups of mapping class groups are exactly convex cocompact subgroups.
In light of Theorem~\ref{thm:ploxo}, finitely generated purely loxodromic subgroups of right-angled Artin groups play the exact analogue of convex cocompact subgroups of right-angled Artin groups.

\subsection{Other right-angled Artin groups}\label{subsec:raag}

It is not very difficult to see that if $\Lambda\subset\gam$ is a subgraph then $A(\Lambda)<A(\gam)$. However, it is clear even in the case where $A(\gam)$ is a nonabelian free group that generally there are more right-angled Artin subgroups of $A(\gam)$ than just the ones coming from subgraphs of $\gam$. The question of which right-angled Artin groups occur as subgroups of $A(\gam)$ was first systematically studied by Kim and the author~\cite{KK2013,KK2013b,KK2015}, cf.~\cite{CDK2013,LeeLee2017}. We outline some of the main features of this theory.

The central object in this theory is the \emph{extension graph} $\gam^e$ of $\gam$, which is the analogue of the curve graph for the right-angled Artin group (see Section \ref{sec:mcg} below). The graph $\gam^e$ is defined by $V(\gam^e)=\{v^g\mid v\in V(\gam),\, g\in A(\gam)\}$, where here $v^g=g^{-1}vg$. We write $\{v^g,w^h\}\in E(\gam^e)$ if $[v^g,w^h]=1$ in $A(\gam)$. The extension graph is generally a locally infinite quasi--tree of infinite diameter, and is quasi--isometric to the \emph{contact graph} for a right-angled Artin group as developed by M. Hagen~\cite{Hagen2014} (cf.~\cite{KK2013b}).

\begin{thm}\label{thm:gex}
Let $\gam$ be a finite simplicial graph.
\begin{enumerate}
\item
If $\Lambda\subset\gam^e$ is a finite subgraph then $A(\Lambda)<A(\gam)$;
\item
If $A(\Lambda)<A(\gam)$ then $\Lambda\subset (\gam^e)_k$;
\item
If $\gam$ has no triangles and $A(\Lambda)<A(\gam)$, then $\Lambda\subset\gam^e$.
\end{enumerate}
\end{thm}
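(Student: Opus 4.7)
The theorem has three separate assertions, and the plan is to address them in turn. For part~(1), I would proceed by induction on $|V(\Lambda)|$. After picking a vertex $v=u^g\in V(\Lambda)$ and partitioning the remaining vertices into the $\Lambda$-neighbors $N$ of $v$ and the non-neighbors $M$, one obtains a visual amalgamated product decomposition
\[A(\Lambda) = \bigl(\langle v\rangle \times A(N)\bigr) *_{A(N)} A(N\cup M),\]
whose two factors embed into $A(\gam)$ by induction (for the second factor) and because $v$ commutes with every element of $N$ by the definition of $\gam^e$ (for the first factor). Injectivity of the amalgamation map into $A(\gam)$ is the main content, and is verified by a normal-form argument using Hermiller--Meier: a non-trivial alternating word cannot collapse inside $A(\gam)$, because the conjugator $g$ forces the $u$-syllable inside $v=u^g$ to survive every admissible free reduction or commutation. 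An alternative approach works via the Salvetti $\catz$ cube complex: conjugates of vertices correspond to equivalence classes of hyperplanes, and the subgroup generated by a finite family of hyperplane stabilizers is a RAAG on the incidence graph of those hyperplanes.

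For part~(2), the main input is Servatius's Centralizer Theorem: the centralizer of a cyclically reduced element $w\in A(\gam)$ with support $\Delta\subseteq\gam$ decomposes as $\langle w\rangle\times A(\lk_\gam(\Delta))$ only when $\Delta$ is a clique. Combined with the Flat Torus Theorem---maximal abelian subgroups of $A(\gam)$ are conjugate into subgroups generated by cliques---this lets me conclude that each generator $v\in V(\Lambda)$ of the embedded subgroup $A(\Lambda)$ has image supported, after conjugation and root extraction, on some clique $K_v\subset\gam$. The assignment $v\mapsto K_v^{g_v}$ with appropriate conjugators $g_v$ then defines an embedding $\Lambda\hookrightarrow (\gam^e)_k$; the edge condition follows from the fact that $K_v^{g_v}\cup K_w^{g_w}$ spans a clique in $\gam^e$ if and only if the images of $v$ and $w$ commute, which happens if and only if $v$ and $w$ are adjacent in $\Lambda$.

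For part~(3), triangle-freeness of $\gam$ forces each clique $K_v$ produced in part~(2) to have one or two vertices, so the task reduces to ruling out the two-vertex case. Concretely, if the image of $v$ is conjugate to $a^ib^j$ with $\{a,b\}\in E(\gam)$, I need to show that $i=0$ or $j=0$. The plan is to invoke the co-contraction and projection techniques from the Kim--Koberda theory: the triangle-free hypothesis implies that $a$ and $b$ share no common neighbor in $\gam$, so their joint star structure is maximally restricted. Combined with the existence of other generators $w\in V(\Lambda)$ witnessing non-commutation with exactly one of $a$ or $b$, one derives a relation in $A(\gam)$ that cannot hold unless one exponent vanishes.

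The main obstacle is the injectivity step in part~(1): commutation relations come for free from the definition of $\gam^e$, but showing that there are no extra relations demands a careful combinatorial analysis tracking Hermiller--Meier reorderings across the amalgamation. Parts~(2) and~(3), by contrast, rest on rigid structural theorems---the Centralizer Theorem, the Flat Torus Theorem, and specific features of triangle-free RAAGs---that largely reduce the task to bookkeeping once the setup is in place.
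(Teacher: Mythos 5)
This theorem is stated in the survey without proof (it is quoted from the Kim--Koberda papers), so there is no in-paper argument to compare against; judged on its own terms, your outline has genuine gaps. For part (1), the entire content of the theorem is the injectivity step you defer to ``a normal-form argument using Hermiller--Meier,'' and the assertion that ``the conjugator $g$ forces the $u$-syllable inside $v=u^g$ to survive'' is precisely what needs proving and is not true in any obvious sense: syllables of $g$ and $g^{-1}$ coming from adjacent letters of an alternating word can interleave and cancel in complicated ways, and it is a delicate point whether the tautological assignment $v^g\mapsto u^g$ is even injective without first replacing generators by high powers (compare the hypothesis $N\gg 0$ in Theorem~\ref{thm:raagmcg}, of which this theorem is the stated analogue). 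The known proof avoids this issue entirely: one shows that $\gam^e$ is an increasing union of finite graphs obtained from $\gam$ by iteratively \emph{doubling along the star of a vertex}, and that the double $D_{\st(v)}(\gam)$ has its right-angled Artin group embedded in $A(\gam)$ by applying Bass--Serre theory to the visual splitting $A(\gam)=A(\gam\setminus v)*_{A(\lk(v))}A(\st(v))$ and the index-two subgroup $\ker(A(\gam)\to\Z/2)$ sending $v\mapsto 1$ and all other generators to $0$; the resulting embedding sends vertices to powers of conjugates, not to the conjugates themselves. Your induction on $|V(\Lambda)|$ might be salvageable, but the crucial cancellation lemma is missing, and it is where all the difficulty lives.

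For part (2), the step ``each generator $v\in V(\Lambda)$ has image supported, after conjugation and root extraction, on some clique $K_v\subset\gam$'' is false: the image of a single generator under an embedding $A(\Lambda)\to A(\gam)$ can be any infinite-order element, for instance a loxodromic element whose cyclically reduced support is all of $\gam$. Neither the Flat Torus Theorem (which bounds the rank of abelian subgroups, and does \emph{not} say that maximal abelian subgroups are conjugate into clique subgroups --- the centralizer of a generic element is the cyclic group on its root, which lies in no conjugate of a clique subgroup) nor Servatius's Centralizer Theorem forces the support of $\phi(v)$ onto a clique. The actual construction of the map $\Lambda\to(\gam^e)_k$ must attach a clique of $\gam^e$ to an element of arbitrary support, which is done via the pure-factor decomposition of $\phi(v)$ and the structure of its centralizer; this is the real work of part (2) and is absent from your sketch. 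Part (3) is likewise only a statement of intent (``one derives a relation that cannot hold''), and note that the issue there is not merely ruling out images of the form $a^ib^j$ but showing that the clique assignment of part (2) can be refined to single vertices of $\gam^e$ while remaining an induced embedding.
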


Casals-Ruiz--Duncan--Kazachkov~\cite{CDK2013} have shown that in general there may be right-angled Artin subgroups of $A(\gam)$ whose defining graphs do not occur as subgraphs of $\gam^e$.

\subsection{Surface groups}

It was first observed by Servatius--Droms--Servatius~\cite{SDS1989} that if $C_n$ denotes the graph which is a cycle on $n\geq 5$ vertices then $A(C_n)$ contains the fundamental group of a closed surface of negative Euler characteristic. This result was generalized by Crisp--Wiest~\cite{CW2004}, who developed the method of label--reading maps and proved that with all but the single exception of a non--orientable surface of Euler characteristic $-1$, every hyperbolic surface group occurs as a subgroup of some right-angled Artin group. The dual question of when a particular right-angled Artin group contains a closed hyperbolic surface group is still open in general, though significant progress has been made (see~\cite{CSS2008}, for instance).

\subsection{Coherence}

Despite their innocuous--looking presentations and the relatively tame subgroups discussed up to now, right-angled Artin groups can have rather wild subgroups. Recall that a group $G$ is \emph{coherent} if every finitely generated subgroup of $G$ is finitely presented. Classical examples of coherent groups are free groups, surface groups, and $3$--manifold groups~\cite{ScottCore}. A classical example of an incoherent group if a product $F_2\times F_2$ of two nonabelian free groups.

Right-angled Artin groups are generally incoherent, as is witnessed by the following result of Droms:

\begin{thm}[\cite{Droms87}]
Let $\gam$ be a finite simplicial graph. The right-angled Artin group $A(\gam)$ is coherent if and only if every circuit in $\gam$ of length more than three has a chord.
\end{thm}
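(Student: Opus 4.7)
The plan is to prove both implications separately, recalling that the hypothesis---that every circuit of length more than three has a chord---is precisely the condition that $\Gamma$ be a \emph{chordal graph}.

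For the forward direction, I would argue by induction on $|V(\Gamma)|$, using the classical graph-theoretic fact that every chordal graph admits a \emph{simplicial vertex}: a vertex $v$ whose link $\lk(v)$ is a clique. Given such a $v$, one has the amalgamated product decomposition
\[
A(\Gamma) \;\cong\; A(\Gamma \setminus \{v\}) *_{A(\lk(v))} A(\st(v)),
\]
in which $\Gamma \setminus \{v\}$ is again chordal (so coherent by induction), $A(\st(v))$ is finitely generated free abelian (since $\st(v)$ is itself a clique), and the edge group $A(\lk(v))$ is finitely generated free abelian as well. The crux of this direction---and what I expect to be the principal technical obstacle---is proving that coherence is preserved under the amalgamation step over a finitely generated free abelian subgroup. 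One natural route is to apply Bass--Serre theory to the induced action of an arbitrary finitely generated subgroup $H \leq A(\Gamma)$ on the Bass--Serre tree of the splitting, extracting an explicit finite presentation of $H$ from the resulting finite graph of groups description with free abelian edge and vertex stabilizers.

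For the reverse direction, suppose $\Gamma$ contains an induced circuit $C_n$ with $n \geq 4$ and no chord. Since full subgraphs of $\Gamma$ give rise to subgroups of $A(\Gamma)$ (as noted in Subsection~\ref{subsec:raag}), we have $A(C_n) \leq A(\Gamma)$; coherence clearly passes to subgroups, so it suffices to exhibit a finitely generated, non--finitely--presented subgroup of $A(C_n)$. For this I would invoke the Bestvina--Brady construction: let $\phi \co A(C_n) \to \Z$ send every generator to $1$ and set $K = \ker \phi$. The flag complex on $C_n$ is the cycle itself---connected, but not simply connected for $n \geq 4$---so the Bestvina--Brady theorem yields that $K$ is finitely generated but not finitely presented. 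In the special case $n = 4$, this recovers Stallings' classical observation that the kernel of the sum--of--exponents homomorphism on $F_2 \times F_2 \cong A(C_4)$ fails to be finitely presented. Thus $A(C_n)$, and hence $A(\Gamma)$, is incoherent, completing the proof. I should note that Droms' original argument predated Bestvina--Brady and used more direct constructions of pathological subgroups within each $A(C_n)$, but the Bestvina--Brady viewpoint captures the geometric source of the obstruction to coherence most transparently.
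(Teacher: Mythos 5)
The paper does not prove this theorem; it only states it with a citation to Droms, so there is nothing to compare your argument against except Droms' original paper itself. Your architecture is sound, and in fact your forward direction reconstructs Droms' actual strategy: reduce to the amalgam $A(\gam)\cong A(\gam\setminus\{v\})*_{A(\lk(v))}A(\st(v))$ over a simplicial vertex and invoke a closure property of coherence under amalgamation over a group all of whose subgroups are finitely generated. That closure property is a genuine lemma (it is the heart of Droms' proof), and your Bass--Serre sketch does go through, but you should make the two load-bearing facts explicit: (i) for a finitely generated $H\leq A*_CB$, the induced edge stabilizers $H\cap gCg^{-1}$ are finitely generated precisely because $C\cong\Z^m$ is Noetherian --- this is where the hypothesis is used, and where the argument would collapse for, say, the splitting $F_2\times F_2\cong(F_2\times\Z)*_{F_2}(F_2\times\Z)$, whose induced edge groups can be infinitely generated free groups; and (ii) a finitely generated fundamental group of a finite graph of groups with finitely generated edge groups has finitely generated vertex groups, which then inherit finite presentability from the coherence of the factors. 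One small imprecision: the induced vertex stabilizers are finitely generated subgroups of conjugates of $A(\gam\setminus\{v\})$, so they are finitely presented by induction but not free abelian; if you want all vertex and edge groups of the ambient splitting to be free abelian, you should pass to the full clique-tree decomposition of the chordal graph rather than peeling off one simplicial vertex at a time. Your reverse direction is correct: full subgraphs give subgroups, coherence passes to subgroups, and the Bestvina--Brady kernel of $A(C_n)\to\Z$ is finitely generated but not finitely presented since the flag complex of a chordless $n$-cycle ($n\geq4$) is connected but not simply connected. As you note, this is anachronistic relative to Droms, who handled $n=4$ via Stallings' subgroup of $F_2\times F_2$ and longer cycles by hand, but it is a clean and valid substitute.
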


In other words, a right-angled Artin group is coherent if and only if the defining graph has no full subgraph which is a cycle of length at least four. Relating the coherence discussion to $3$--manifold groups, Droms also proves that a right-angled Artin group is a $3$--manifold group if and only if each component of the defining graph is a triangle or a tree~\cite{Droms87}.

Subgroups of right-angled Artin groups enjoying (or lacking) various subtler finiteness properties were studied by Bestvina--Brady~\cite{BestBra97}. We will not discuss these subgroups here, since it would take us too far afield: suffice it to note that the subgroups structure of right-angled Artin groups can be very complicated.

Observe that, in light of the discussion in this subsection, the existence of right-angled Artin subgroups of some other group can be useful in proving that the larger group is incoherent.

\subsection{Hyperbolic manifolds and virtual specialness}\label{subsec:vspecial}

The concept of a \emph{virtually special} cube complex was developed by Haglund--Wise~\cite{HW2008}, and provides an intrinsic geometric characterization of subgroups of right-angled Artin groups.

A geodesic metric space $X$ is \emph{CAT(0)} if geodesic triangles in $X$ satisfy the CAT(0) inequality. The reader is directed to~\cite{GromovBook} and~\cite{BH1999} for a complete introduction to CAT(0) geometry. Specifically, let $\{p,q,r\}$ be vertices of a geodesic triangle $T$, and let $\{P,Q,R\}$ be vertices of a geodesic triangle in $\R^2$ (i.e. the comparison triangle) whose side lengths are the same as those of $T$. If $x$ and $y$ are points on the sides $\overline{pq}$ and $\overline{qr}$ respectively then the distance between $x$ and $y$ is at most that between the corresponding points $X$ and $Y$ on $\overline{PQ}$ and $\overline{QR}$ respectively.

A \emph{$k$--cube} is defined to by $[-1,1]^k$, with the standard metric inherited from $\R^k$. A \emph{subcube} is obtained by restricting some coordinate to $\{\pm 1\}$, and \emph{midcube} is obtained by restricting one coordinate to $0$. A \emph{dual cube} to a midcube is a $1$--subcube given by restricting all the nonzero coordinates of the midcube to $\{\pm 1\}$.

A \emph{cube complex} is a union of cubes, glued to each other via isometries of subcubes. A \emph{hyperplane} in a cube complex is a connected subspace which intersects each cube in either a midcube or emptily. A cube complex $X$ is \emph{locally CAT(0)} if the link of every $0$--cube is a flag complex. By a result of Gromov~\cite{GromovBook}, a simply connected locally CAT(0) cube complex is CAT(0).

A locally CAT(0) cube complex $X$ is called \emph{special} (see~\cite{HW2008,Wise2011,WiseERAMS,Wise2012}) if the hyperplanes of $X$ avoid certain pathological configurations. Precisely, we require:

\begin{enumerate}
\item
There are no one--sided hyperplanes;
\item
There are no self--intersecting hyperplanes;
\item
There are no self--osculating hyperplanes;
\item
There is no pair of osculating hyperplanes.
\end{enumerate}

Here, we say that hyperplanes $H_1$ and $H_2$ \emph{osculate} if there are dual subcubes to $H_1$ and $H_2$ which intersect nontrivially.

We say $X$ is \emph{virtually special} if some finite cover of $X$ is special. If $G=\pi_1(X)$ for $X$ (virtually) special, we say that $G$ is (virtually) special.

A prototypical example of a special cube complex is the \emph{Salvetti complex} of a finite simplicial graph $\gam$, denoted $S(\gam)$. To build $S(\gam)$, we start with the one--skeleton of the unit cube in $\R^{|V(\gam)|}$, and we label the unit vectors emanating from the origin by the elements of $V(\gam)$. If $\Lambda\subset\gam$ is complete, we include the face of the cube spanned by the vectors corresponding to the vertices of $\Lambda$. Doing this for each complete subgraph of $\gam$, we obtain a subcomplex $Y$ of the unit cube, equipped with a natural cube complex structure. We then define $S(\gam)$ to be the image of $Y$ in $\R^{|V(\gam)|}/\Z^{|V(\gam)|}$. It is easy to check that $\pi_1(S(\gam))\cong A(\gam)$, and that $S(\gam)$ is a locally CAT(0) cube complex (cf.~\cite{BradyRileyShort}).

A map between cube complexes is a \emph{local isometry} if the induced map on links of $0$--complexes is an inclusion, adjacency preserving, and full (as a map of simplicial complexes). Note that a local isometry of cube complexes induces an inclusion on fundamental groups.

The following is the fundamental result about special cube complexes:

\begin{thm}[\cite{HW2008}]\label{thm:wise}
A locally CAT(0) cube complex is special if and only if it admits a local isometry to $S(\gam)$ for some finite simplicial graph $\gam$.
\end{thm}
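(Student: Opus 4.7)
The plan is to prove each direction separately. For the harder direction, given a special cube complex $X$, I would construct the graph $\gam$ directly from the combinatorics of $X$ and then define a canonical labeling map $\phi\co X\to S(\gam)$; for the reverse direction, I would argue by contradiction, pulling back a hyperplane pathology in $X$ to a corresponding pathology in $S(\gam)$, whose specialness can be verified by inspection.

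Suppose first that $X$ is special. I would let $\gam$ be the \emph{crossing graph} of $X$: the vertices are the hyperplanes of $X$, and two hyperplanes are declared adjacent when they cross, i.e.\ share a square as distinct midcubes. Condition (1) lets me choose a coherent transverse orientation for each hyperplane, which in turn orients every dual edge of $X$. The map $\phi$ sends $X^{(0)}$ to the unique $0$-cube of $S(\gam)$ and sends each oriented edge dual to a hyperplane $H$ to the oriented loop in $S(\gam)$ labeled by the generator $H\in V(\gam)$. To extend $\phi$ cellularly, I would observe that the hyperplanes dual to a $k$-cube of $X$ are pairwise crossing, hence form a clique in $\gam$, hence bound a $k$-cube of $S(\gam)$. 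To verify that $\phi$ is a local isometry, I would show that the induced map on the link of each $0$-cube of $X$ is an injective inclusion onto a full subcomplex of the link of the $0$-cube of $S(\gam)$. Injectivity would follow from conditions (2) and (3), which prevent two distinct edges at a vertex from being dual to the same hyperplane in the problematic configurations. Fullness would follow from (4): if two hyperplanes $H_1, H_2$ have dual edges meeting at a vertex $v$ and $H_1, H_2$ cross somewhere in $X$, then the absence of an inter-osculating pair forces them to already cross at $v$, providing the required square.

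For the converse, assume $\phi\co X\to S(\gam)$ is a local isometry. The Salvetti complex is itself special by direct inspection: its standard generator loops are two-sided, visibly non-self-intersecting, and the only pairs of hyperplanes sharing a dual $0$-cube are those whose generators commute, in which case they in fact cross. Since $\phi$ is a local isometry, each of the four pathologies in $X$ (a one-sided hyperplane, a self-intersection, a self-osculation, or an inter-osculating pair) would descend via $\phi$ to the same pathology at the image $0$-cube in $S(\gam)$, contradicting its specialness. Hence $X$ is special.

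The hard part, I expect, is the fullness verification in the forward direction. Injectivity of $\phi$ on links is a local, almost bookkeeping assertion that falls out of (2) and (3). Fullness, however, is a global-to-local statement: one must argue that a pair of hyperplanes crossing \emph{somewhere} in $X$ must in fact cross at \emph{every} vertex where both have dual edges. This is precisely what condition (4) is designed to rule out, and pinning down the argument requires a careful cube-by-cube analysis near a vertex where two such dual edges meet, together with a clean definition of osculation that is robust enough for the link-level comparison with $S(\gam)$.
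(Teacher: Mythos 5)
The paper does not prove this theorem: it is quoted from \cite{HW2008} without argument, so there is no in-paper proof to compare against. Your proposal reconstructs the standard Haglund--Wise argument from that reference --- crossing graph of hyperplanes as $\gam$, the canonical labeling map to $S(\gam)$ extended over cubes via the clique condition, and verification of the link condition with injectivity charged to conditions (2)--(3) and fullness charged to condition (4). That is the right decomposition, and your closing remark that fullness is exactly the global-to-local step governed by the no-inter-osculation axiom is on target.

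One point needs repair, and it touches both directions. ``No self-osculating hyperplanes'' must be read as ``no \emph{directly} self-osculating hyperplanes,'' i.e.\ no two distinct dual edges issuing from a common $0$-cube with the \emph{same} transverse orientation and spanning no square. Under the literal reading, your claim that $S(\gam)$ is special ``by direct inspection'' is false: each generator loop of $S(\gam)$ has both of its ends at the unique $0$-cube and those two ends span no square, so every hyperplane of $S(\gam)$ self-osculates \emph{indirectly}. The theorem survives because the link of the $0$-cube of $S(\gam)$ has two vertices $v^{+},v^{-}$ per generator, so the link map records co-orientations; injectivity of the link map therefore only requires that no two distinct dual edges at a vertex of $X$ carry the same hyperplane with the same co-orientation, which is precisely no self-intersection plus no \emph{direct} self-osculation. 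Indirect self-osculations in $X$ are harmless and unavoidable. With that adjustment your forward direction goes through. In the converse, note also that two distinct hyperplanes of $X$ may have the same image hyperplane in $S(\gam)$, so an inter-osculation upstairs may descend to a direct self-osculation or self-intersection downstairs rather than to ``the same pathology''; since $S(\gam)$ has none of the forbidden configurations in any case, the contradiction still stands.
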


Theorem \ref{thm:wise} gives an intrinsic geometric characterization of subgroups of right-angled Artin groups.

Combining the work of Wise and Agol~\cite{Agol2013}, we obtain the following:

\begin{thm}\label{thm:vhaken}
Let $G$ be the fundamental group of a finite volume hyperbolic $3$--manifold. Then $G$ is virtually special.
\end{thm}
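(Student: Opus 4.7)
The plan is to produce a proper cocompact action of $G$ on a $\catz$ cube complex, and then invoke a theorem of Agol which promotes such an action into virtual specialness. The argument splits according to whether the underlying hyperbolic $3$--manifold $M$ with $\pi_1(M)\cong G$ is closed or has cusps, but in both cases the goal is to realize $G$ as acting geometrically on a $\catz$ cube complex, after which a single criterion applies.

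For the closed case, the first step is to construct enough codimension--one subgroups to cubulate $G$. I would invoke the surface subgroup theorem of Kahn--Markovic, which produces, for any pair of distinct points on $\partial \bH^3$, an immersed closed quasi-Fuchsian surface in $M$ whose limit set separates them. These surface subgroups give a $G$--invariant wallspace, and Sageev's dualization construction then produces a $\catz$ cube complex $X$ on which $G$ acts. Work of Bergeron--Wise shows that the Kahn--Markovic surfaces are abundant enough that the resulting $G$--action on $X$ is proper and cocompact. In the cusped case, analogous geometrically finite surfaces produced by Masters--Zhang, combined with a relatively hyperbolic Sageev construction, yield an analogous geometric action on a $\catz$ cube complex (with parabolic subgroups stabilizing the cube complex analogues of horospheres).

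At this stage, $G$ is a (relatively) hyperbolic group acting geometrically on a $\catz$ cube complex. The final and most delicate step is Agol's theorem, which asserts that any hyperbolic group acting properly and cocompactly on a $\catz$ cube complex is virtually special. The proof rests on Wise's Malnormal Special Quotient Theorem, which permits one to kill hyperplane stabilizers while preserving virtual specialness, combined with an intricate coloring argument on a finite cover that eliminates all of the hyperplane pathologies catalogued in Theorem~\ref{thm:wise}. The relatively hyperbolic version, using Agol--Groves--Manning Dehn filling in tandem with the MSQT, handles the cusped case. The main obstacle is precisely this last ingredient: the Kahn--Markovic surface theorem and the cubulation it enables are serious results in their own right, but the conceptual heart of Theorem~\ref{thm:vhaken} is Agol's upgrade from a geometric cubulation to virtual specialness, and it is there that I expect the deepest work to lie.
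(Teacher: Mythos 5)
The paper does not actually prove Theorem~\ref{thm:vhaken}; it records it as a black box, ``combining the work of Wise and Agol''~\cite{Agol2013}, so there is no in-text argument to compare against. Your outline is a correct and fair description of the architecture of that external proof: in the closed case, Kahn--Markovic quasi-Fuchsian surfaces plus Sageev's dual cube complex construction and the Bergeron--Wise criterion give a proper cocompact action on a $\catz$ cube complex, and Agol's theorem (resting on Wise's Malnormal Special Quotient Theorem and the weak separation/coloring argument) upgrades this to virtual specialness. One historical caveat on the cusped case: in the original resolution this case is due to Wise alone and proceeds via his Quasiconvex Hierarchy Theorem --- a cusped finite-volume hyperbolic $3$--manifold is Haken and admits a quasiconvex hierarchy, which is fed directly into the hierarchy machinery rather than into a cubulation-plus-relative-Agol argument. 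The route you describe (geometrically finite surfaces, a relatively cocompact cubulation in the sense of Hruska--Wise, and a relatively hyperbolic version of Agol's theorem via Dehn filling) has also been carried out in the literature and is valid, but it is a somewhat anachronistic reconstruction rather than the path the cited sources take; if you present it, you should be explicit that cocompactness fails for the dual cube complex in the cusped setting and that ``relative cocompactness'' is the property actually obtained and used. With that caveat, your identification of Agol's specialness theorem as the conceptual heart of the statement is exactly right.
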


There is a generalization of Theorem \ref{thm:vhaken} in higher dimensions due to Bergeron--Haglund--Wise~\cite{BHW2011}, which shows that many lattices in higher dimensional groups of isometries of real hyperbolic space are also virtually special. Thus, right-angled Artin groups generally contain a profusion of (relatively) hyperbolic groups.

\subsection{Computational complexity}
We remark some work of M. Bridson~\cite{MR3151642}, which makes precise the mantra that even finitely presented subgroups of right-angled Artin groups can be very complicated.

\begin{thm}\label{thm:bridson}
There exists a finite simplicial graph $\gam$ and a finitely presented subgroup $H<A(\gam)$ such that:
\begin{enumerate}
\item
The isomorphism problem is unsolvable for the class of finitely presented subgroups of $A(\gam)$;
\item
The conjugacy problem for $H$ is unsolvable.
\end{enumerate}
\end{thm}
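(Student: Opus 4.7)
The plan is to combine Wise's version of the Rips construction with the $1$-$2$-$3$ theorem of Baumslag--Bridson--Miller--Short, and then encode algorithmic undecidability through fiber products. The starting point is that, given any finitely presented group $Q$, the Rips--Wise construction yields a short exact sequence $1\to K\to G\to Q\to 1$ in which $K$ is finitely generated, $G$ is word hyperbolic, and $G$ is virtually compact special. By Theorem~\ref{thm:wise}, a finite index subgroup of $G$ embeds into a right-angled Artin group $A(\gam_0)$, and hence so does the diagonal fiber product $P=G\times_Q G\leq G\times G$, which sits inside $A(\gam_0)\times A(\gam_0)$ -- itself the right-angled Artin group on the graph join of two copies of $\gam_0$.

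Next, the asymmetric $1$-$2$-$3$ theorem guarantees that when $Q$ is of type $F_3$, the fiber product $P$ is finitely presented, and a finite presentation of $P$ can be computed effectively from finite presentations of $G$ and $Q$. Moreover, the construction is sufficiently functorial that a single ambient graph $\gam$ can be arranged to accommodate many different choices of $Q$ at once: one selects a universal $\gam$ into which all of the Rips--Wise outputs that arise in the argument embed simultaneously.

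For assertion (2), one arranges conjugacy in $P$ to encode the word problem of the quotient $Q$. Pairs of elements in $P$ of the form $(g,1)$ and $(h,1)$ can be set up so that their conjugacy in $P$ is equivalent to $gh^{-1}\in K$, modulo a condition that is tractable inside $G$. Choosing $Q$ to be of type $F_3$ with unsolvable word problem -- via a Collins--Miller style construction -- then produces an $H=P<A(\gam)$ with unsolvable conjugacy problem. For assertion (1), one constructs a recursive family of finitely presented subgroups $\{H_n\}\subset A(\gam)$ whose pairwise isomorphism types encode the halting problem for a fixed Turing machine, combining an Adian--Rabin style scheme for finitely presented groups with the flexibility afforded by the Rips--Wise construction.

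The main obstacle is consolidating all of this into a single fixed graph $\gam$ while keeping every step effective. Concretely, one needs to verify that (a) all the relevant Rips--Wise outputs embed into a common $A(\gam)$, (b) the fiber product construction produces a recursive list of finitely presented subgroups of $A(\gam)$ with effectively computable presentations, and (c) the encoded unsolvability survives all of these reductions. This bookkeeping -- rather than any single conceptual hurdle -- is the technical heart of Bridson's proof.
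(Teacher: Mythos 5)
The paper itself gives no proof of this theorem; it is quoted as a black box from Bridson's paper \cite{MR3151642}, so there is nothing internal to compare against. Your outline is, however, a faithful reproduction of the strategy of the cited source: the Rips--Wise construction producing a virtually special hyperbolic $G$ surjecting onto a pathological $Q$ of type $F_3$, the (effective, asymmetric) $1$-$2$-$3$ theorem to get finite presentability of the fibre product $P=G\times_Q G$ inside $A(\gam_0)\times A(\gam_0)$, and the Baumslag--Bridson--Miller--Short encoding of the word problem of $Q$ into the conjugacy and isomorphism problems for such fibre products. Two small imprecisions, neither fatal to the approach: first, when only a finite-index subgroup $G_0<G$ is special, it is $P\cap(G_0\times G_0)$ (equivalently, the fibre product over the image of $G_0$ in $Q$) that lands in the right-angled Artin group, and one must check that the image of $G_0$ is still of type $F_3$ with unsolvable word problem --- which it is, being of finite index in $Q$; second, your description of the conjugacy encoding via pairs $(g,1),(h,1)$ is garbled, since such elements lie in $P$ only when $g,h\in K$, making the condition $gh^{-1}\in K$ vacuous --- the actual encoding tests conjugacy of pairs of the form $(x a x^{-1}, a)$ against $(a,a)$, which reduces to deciding membership of a lift of a word of $Q$ in $K$, i.e.\ the word problem in $Q$.
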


Theorem \ref{thm:bridson} shows that any class of groups which is rich enough to contain sufficiently complicated right-angled Artin groups is computationally intractable. Note that, by virtue of residual finiteness (cf. Subsection~\ref{sssec:criterion}), the word problem for right-angled Artin groups and all of their subgroups is automatically solvable (see~\cite{LS2001}, for instance).

\subsection{Some non--subgroups}\label{subsec:nonex}

Other than nonabelian solvable groups as in Subsection~\ref{subsec:solvable}, there are several other classes of groups which cannot occur as subgroups of right-angled Artin groups.

\subsubsection{Finite subgroups}

It is not difficult to prove that a simply connected CAT(0) space is contractible. It follows that right-angled Artin groups are torsion--free, since the universal cover of the Salvetti complex admits the structure of a finite dimensional contractible CW complex.

\subsubsection{Central extensions}

A \emph{central extension} of a group $Q$ is an exact sequence \[1\to Z\to G\to Q\to 1,\] where the image of $Z$ under the second map lies in the center of $G$. The extension is \emph{split} of there exists a map $Q\to G$ splitting this sequence. The extension is \emph{virtually split} if some finite index subgroup of $G$ has the structure of a split extension.

\begin{prop}[See~\cite{Koberda2012,KS2016,KapLeeb1996}]\label{prop:vsplit}
Let $G$ be a non--virtually split central extension. Then no finite index subgroup of $G$ occurs as a subgroup of a right-angled Artin group.
\end{prop}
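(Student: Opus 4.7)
I would prove the contrapositive: if some finite-index subgroup $H \leq G$ embeds in a right-angled Artin group $A(\gam)$, then $G$ is virtually split. Since every finite-index subgroup of $H$ is also finite-index in $G$, it suffices to prove that $H$ is virtually split, so we may assume outright that $G \leq A(\gam)$. Because $A(\gam)$ is torsion-free, so is the central kernel $Z$; if $Z=1$ the extension splits trivially, so fix an element $z \in Z$ of infinite order, and after conjugating $G$ within $A(\gam)$ assume $z$ is cyclically reduced.

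The key input is the CAT(0) geometry of $A(\gam)$ acting on the universal cover $\widetilde{S(\gam)}$ of its Salvetti complex. The element $z$ acts as a semisimple hyperbolic isometry, so its minset $M := \mathrm{Min}(z) \subset \widetilde{S(\gam)}$ is a nonempty convex $C_{A(\gam)}(z)$-invariant subcomplex that splits isometrically as $M = M_0 \times \R$, with $z$ acting by translation on the $\R$-factor. By the standard theory of CAT(0) groups (\cite{BH1999}), the centralizer $C_{A(\gam)}(z)$ acts properly cocompactly on $M$, and $\langle z\rangle$ is central in $C_{A(\gam)}(z)$. The Kapovich--Leeb splitting theorem \cite{KapLeeb1996} then produces a finite-index subgroup $C^0 \leq C_{A(\gam)}(z)$ that decomposes as an internal direct product $C^0 = \langle z^n\rangle \times C'$ for some $n \geq 1$.

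Since $z \in Z(G)$, we have $G \leq C_{A(\gam)}(z)$. Set $G' = G \cap C^0$, a finite-index subgroup of $G$; note that $z^n \in G \cap C^0 = G'$. The projection of $C^0$ onto its $\langle z^n\rangle$-factor, restricted to $G'$, therefore admits the section $1 \mapsto z^n$, and since $\langle z^n\rangle$ sits as a direct factor of $C^0$ it commutes with the kernel $G' \cap C'$. This upgrades the split semidirect decomposition to a genuine direct-product decomposition $G' = (G' \cap C') \times \langle z^n\rangle$. Iterating the construction on $G' \cap C'$, which inherits a central extension structure from $G$ with central kernel of strictly smaller torsion-free rank (using that $G$, and hence $Z$, is finitely generated), one eventually expresses a finite-index subgroup of $G$ as $(Z \cap G^*) \times K$ for some complement $K$. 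This exhibits a split central extension on $G^*$, contradicting the non-virtual-splitness of $G$; the argument is essentially the one also used in \cite{Koberda2012,KS2016}.

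The main obstacle is the Kapovich--Leeb step: one must verify both that $C_{A(\gam)}(z)$ acts cocompactly on $M$ (a standard consequence of the cocompact action of $A(\gam)$ on $\widetilde{S(\gam)}$, together with the fact that $z$ is semisimple) and that the $\R$-direction of the product decomposition yields a virtual direct factor of the centralizer. The remaining pieces---the initial reduction to $G \leq A(\gam)$, the recognition of a genuine direct-product decomposition from the presence of the central element $z^n$ in $G'$, and the inductive iteration on the residual center---are comparatively routine once $z$ supplies the required central element of infinite order.
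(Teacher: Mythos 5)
Your argument is correct and is essentially the one underlying the sources cited for this proposition: an infinite-order central element $z$ of $G\le A(\Gamma)$ acts as a hyperbolic isometry on the $\CAT(0)$ universal cover of the Salvetti complex, its centralizer acts properly cocompactly on $\operatorname{Min}(z)$, the Kapovich--Leeb/Flat Torus splitting makes $\langle z\rangle$ a virtual direct factor of that centralizer (hence a virtual direct factor of $G$), and iterating splits $Z$ off virtually as a direct factor, contradicting non--virtual splitness. The one small repair concerns termination of your induction: $G$ is not assumed finitely generated, so rather than deducing finite generation of $Z$ from that of $G$, appeal to the fact that a torsion--free abelian subgroup of a group acting properly cocompactly on a $\CAT(0)$ space is isomorphic to $\Z^m$ for some finite $m$ (the Solvable Subgroup Theorem, as recorded for right-angled Artin groups in Section~3), so the rank of the residual central kernel strictly decreases and the process stops.
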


The Birman Exact Sequence~\cite{MR0375281,FM2012} shows that if $S$ is a surface of genus at least three or genus two with at least two punctures or boundary components, then the mapping class group $\Mod(S)$ contains a copy of the group $U$, the fundamental group of the unit tangent bundle of a closed genus two surface. This group has the presentation \[U=\langle a,b,c,d,z\mid [a,b][c,d]=z^2,\, [a,z]=[b,z]=[c,z]=[d,z]=1\rangle,\] and is a non--virtually split central extension of a closed surface group. By Proposition~\ref{prop:vsplit}, the groups $U$ and $\Mod(S)$ and their finite index subgroups cannot embed in any right-angled Artin group.

\subsubsection{A criterion for admitting a nontrivial homomorphism to a right-angled Artin group}\label{sssec:criterion}

Another useful property of right-angled Artin groups is that they are \emph{residually torsion--free nilpotent}. This means that for every nontrivial element $w\in A(\gam)$, there exists a torsion--free nilpotent quotient of $A(\gam)$ in which $w$ survives (see~\cite{MKSBook}, for instance). In particular, this gives another proof that right-angled Artin groups are torsion--free.

\begin{prop}\label{prop:homomorphism}
Let $G$ be a finitely generated group and let $A(\gam)$ be a right-angled Artin group.
\begin{enumerate}
\item
The group $G$ admits a nontrivial homomorphism to $A(\gam)$ if and only if $H^1(G,\Q)\neq 0$.
\item
If there exists an injective homomorphism $G\to A(\gam)$ and $G$ is nonabelian then $H^1(G,\Q)$ has rank at least two.
\end{enumerate}
\end{prop}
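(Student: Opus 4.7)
The plan is to reduce both parts to properties of the lower central series of $A(\gam)$. For the easy direction of (1), a nonzero class in $H^1(G,\Q)$ gives a nontrivial homomorphism $G\to\Q$; since $G$ is finitely generated this factors through $\Z$, and composing with the inclusion $\Z\hookrightarrow A(\gam)$ onto any vertex subgroup yields a nontrivial homomorphism to $A(\gam)$ (assuming $\gam\neq\emptyset$, which we may).

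For the other direction of (1) and for (2), I would exploit the classical fact that each lower central quotient $A(\gam)/\gamma_i A(\gam)$ is finitely generated, torsion--free, and nilpotent, and that $\bigcap_i \gamma_i A(\gam)=\{1\}$; the latter is the content of the residual nilpotence mentioned in Subsection~\ref{sssec:criterion}. Given $f\colon G\to A(\gam)$, put $H=f(G)$ and $N_i = H/(H\cap\gamma_i A(\gam))$. Each $N_i$ embeds into $A(\gam)/\gamma_i A(\gam)$, so $N_i$ is finitely generated torsion--free nilpotent, and hence its abelianization $N_i^{\mathrm{ab}}$ is finitely generated torsion--free abelian. For the converse of (1): if $f$ is nontrivial then $H\neq 1$, and the intersection condition produces some $i$ with $N_i\neq 1$, whence $N_i^{\mathrm{ab}}$ has rank at least one. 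The composition $G\twoheadrightarrow H\twoheadrightarrow N_i^{\mathrm{ab}}$ is an abelian quotient of $G$ of positive rank, so $H^1(G,\Q)\neq 0$.

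For (2), since $G$ is nonabelian, pick $g,h\in G$ with $[g,h]\neq 1$; this commutator survives in some $A(\gam)/\gamma_i A(\gam)$, so $N_i$ is nonabelian at that level. The crucial input is the standard lemma that a finitely generated torsion--free nilpotent group with cyclic abelianization is itself cyclic; one proves this by writing $N=\langle a\rangle\cdot[N,N]$ and computing commutators modulo $\gamma_3 N$ to force $\gamma_2 N\subseteq\gamma_3 N$, then iterating down the lower central series. Applied to the nonabelian $N_i$, this guarantees that $N_i^{\mathrm{ab}}$ is torsion--free abelian of rank at least two, and the surjection $G^{\mathrm{ab}}\twoheadrightarrow N_i^{\mathrm{ab}}$ yields the claimed bound.

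The only real subtlety I see is to work with $H\cap\gamma_i A(\gam)$ rather than $\gamma_i H$: the intrinsic lower central quotients of $H$ need not be torsion--free, so the torsion--freeness that drives the whole rank computation must be imported from the ambient right-angled Artin group.
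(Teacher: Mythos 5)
Part (1) of your argument is correct and is essentially the paper's: a nontrivial image of $G$ in a torsion--free nilpotent quotient of $A(\gam)$ is an infinite finitely generated nilpotent group, hence has infinite abelianization, hence $H^1(G,\Q)\neq 0$; the converse is the same composition through $\Z$ that the paper uses. Your part (2) takes a genuinely different route from the paper, which passes to a non--split central extension $1\to Z\to N\to A\to 1$ and uses the classification of such extensions by $H^2(A,Z)$ to force $A$ to have rank at least two. Your lower--central--series approach could be made to work, but as written it has a gap.

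The gap is the assertion that $N_i$ being finitely generated, torsion--free, and nilpotent forces $N_i^{\mathrm{ab}}$ to be torsion--free. This is false. Take $\gam$ to be two isolated vertices, so $A(\gam)=F_2=\langle x,y\rangle$, and let $Q=F_2/\gamma_3F_2$ be the integral Heisenberg group with $z=[x,y]$ central. The subgroup $H=\langle x^2,y,z\rangle\leq Q$ is finitely generated, torsion--free, nilpotent, and of exactly the form $H/(H\cap\gamma_iA(\gam))$ you consider, yet $[H,H]=\langle z^2\rangle$ and $H^{\mathrm{ab}}\cong\Z^2\oplus\Z/2$. Consequently your final step does not close: your (correct) lemma that a cyclic abelianization forces $N_i$ to be cyclic only excludes the case where $N_i^{\mathrm{ab}}$ is cyclic, and without torsion--freeness a non--cyclic abelianization can still have rank one (e.g.\ $\Z\oplus\Z/2$), which is precisely the case you must rule out; importing torsion--freeness from the ambient group, as you do, controls $N_i$ itself but not $N_i^{\mathrm{ab}}$. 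The repair is to strengthen the lemma to: a finitely generated torsion--free nilpotent group $N$ with $H^1(N,\Q)$ of rank at most one is abelian. One proof: the commutator pairing shows $\left(\gamma_2N/\gamma_3N\right)\otimes\Q$ is a quotient of $\Lambda^2\left(N^{\mathrm{ab}}\otimes\Q\right)$, which vanishes when the rank is at most one, and inductively each $\gamma_jN/\gamma_{j+1}N$ with $j\geq 2$ is finite; hence $\gamma_2N$ is a finite subgroup of the torsion--free group $N$ and is trivial. With that lemma in place, the rest of your reduction (in particular working with $H\cap\gamma_iA(\gam)$ rather than $\gamma_iH$, which is indeed the right choice) goes through.
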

\begin{proof}
For the first item, let $\phi\colon G\to A(\gam)$ be a nontrivial homomorphism. Then there is a torsion--free nilpotent quotient $q\colon A(\gam)\to N$ such that $q\circ\phi$ is not trivial. Since the image of $q\circ\phi$ is torsion--free and nilpotent, the image surjects to $\Z$, whence $H^1(G,\Q)\neq 0$. Conversely, suppose that $H^1(G,\Q)\neq 0$, and choose a surjective homomorphism $\phi\colon G\to\Z$. Choosing any nontrivial element $w\in A(\gam)$, we get a nontrivial map $G\to A(\gam)$ by composing $\phi$ with the map $\Z\to A(\gam)$ sending a generator of $\Z$ to $w$.

For the second item, we again appeal to the residual torsion--free nilpotence of $A(\gam)$ to find a nonabelian torsion--free nilpotent quotient $N$ of $G$. By passing to a further quotient of $G$ if necessary, we may assume that $N$ is a non--split central extension \[1\to Z\to N\to A\to 1,\] where $A$ is torsion--free and abelian. Since equivalence classes of such central extensions are classified by cohomology classes in $H^2(A,Z)$ (see~\cite{AlperinBell,MacLaneHomology}, for instance), this extension is split unless $H^2(A,Z)\neq 0$ . Since $A$ is torsion--free, it follows that it must have rank at least two, whence $H^1(G,\Q)$ has rank at least two.
\end{proof}

From Proposition~\ref{prop:homomorphism}, one can deduce a number of corollaries. For instance, one can show that a noncyclic knot group cannot embed into a right-angled Artin group. Item (2) of Proposition~\ref{prop:homomorphism} is hardly an if and only if criterion, and hence of somewhat limited applicability.

\section{Homeomorphism groups and orderability}

Homeomorphism groups of one--manifolds have been studied classically, and there are satisfying algebraic characterizations of countable subgroups of these homeomorphism groups. General references for this section are~\cite{DDRW08,DeroinNavasRivas,Ghys2001,Navas2011}.

First, note that $\R$ and $S^1$ are both topological groups in their own right, and hence act on themselves by the regular action. We thus obtain many abelian subgroups of $\Homeo^+(\R)$ and $\Homeo^+(S^1)$. Characterizations of $\R$ and $S^1$ as subgroups of $\Homeo^+(\R)$ and $\Homeo^+(S^1)$ respectively are known as H\"older's Theorem and Denjoy's Theorem, respectively, which we record for use in the sequel. Recall that a group $G$ acts freely on a set $X$ if for each $1\neq g\in G$ we have $\Fix g=\varnothing$.

\begin{thm}[H\"older's Theorem]\label{thm:holder}
Let $G<\Homeo^+(\R)$ be a countable group which acts freely on $\R$. Then $G$ is abelian.
\end{thm}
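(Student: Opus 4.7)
The plan is to construct an order-preserving injection $G \hookrightarrow (\R,+)$, which immediately forces $G$ to be abelian. This is the classical route to H\"older's theorem: turn the free action into an Archimedean bi-invariant order on $G$, then embed any such ordered group into the reals.

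First I would use freeness to put a bi-invariant total order on $G$. For any $1 \neq g \in G$ the continuous function $x \mapsto g(x)-x$ is nowhere zero, so by the intermediate value theorem it has constant sign on $\R$. Declare $g \succ 1$ if $g(x) > x$ for some (equivalently every) $x$, and $g \prec 1$ otherwise; then set $g \prec h$ iff $hg^{-1} \succ 1$. A direct check shows this is a total order and is invariant under both left and right multiplication, using that $g(x) > x$ for all $x$ is a conjugation-invariant condition.

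Next I would establish the Archimedean property: for any $g,h \succ 1$ and any basepoint $x_0 \in \R$, there is an $n \in \bN$ with $g^n(x_0) > h(x_0)$. The sequence $\{g^n(x_0)\}_{n \geq 0}$ is strictly increasing because $g \succ 1$; if it were bounded above with supremum $s$, then by continuity $g(s) = s$, which contradicts $\Fix g = \varnothing$. Hence $g^n(x_0) \to \infty$, and in particular exceeds $h(x_0)$ eventually.

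Finally I would build the embedding. Fix $x_0 \in \R$ and a reference element $g_0 \succ 1$. For each $g \in G$ and each integer $n \geq 1$, the Archimedean property produces a unique integer $p_n = p_n(g)$ with $g_0^{p_n} \preceq g^n \prec g_0^{p_n+1}$. Define
\[
\phi(g) = \lim_{n \to \infty} \frac{p_n(g)}{n}.
\]
Using bi-invariance of the order and the Archimedean property, one shows that the sequence $p_n/n$ is Cauchy, that $\phi$ is a homomorphism to $(\R,+)$, that it is order-preserving, and that it is injective (any $g \succ 1$ satisfies $\phi(g) > 0$ by Archimedicity applied to $g$ and $g_0$). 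Since $\R$ is abelian, $G$ is abelian.

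The main obstacle is verifying that $\phi$ is a well-defined order-preserving homomorphism; this is the core of the abstract H\"older theorem for Archimedean ordered groups and requires careful comparison estimates between $g^{mn}$, $h^{mn}$, and $(gh)^{mn}$ under the order. By contrast, extracting the bi-invariant order and the Archimedean property from free action is a short continuity argument, and applying the resulting embedding to conclude commutativity is immediate.
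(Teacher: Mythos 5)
The paper records H\"older's Theorem as a classical fact without proof (deferring to the references \cite{DeroinNavasRivas,Ghys2001,Navas2011}), so there is no in-paper argument to compare against; your proposal is precisely the standard proof found in those sources, and it is correct. The preliminary steps are all sound: freeness plus the intermediate value theorem gives a conjugation-invariant positive cone $P=\{g: g(x)>x \text{ for all } x\}$, hence a bi-invariant total order; and your single-point Archimedean check does suffice, since by totality the sign of $g^nh^{-1}(y)-y$ is constant in $y$. The one step you defer --- that $\phi(g)=\lim p_n(g)/n$ is a well-defined homomorphism --- is indeed the crux, and it is worth recording how it avoids circularity (one might worry that comparing $(gh)^n$ with $g^nh^n$ secretly uses the commutativity being proved). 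Existence of the limit follows from $p_m+p_n\leq p_{m+n}\leq p_m+p_n+1$, which is immediate from multiplying the two sandwich inequalities using bi-invariance, together with Fekete's lemma. For additivity, the key inequality is: if $hg\preceq gh$, then $h^ng^n\preceq (gh)^n\preceq g^nh^n$ for all $n$ (a short induction using only bi-invariance, e.g.\ $(gh)^2=g(hg)h\preceq g(gh)h=g^2h^2$). Since both $g^nh^n$ and $h^ng^n$ lie between $g_0^{p_n(g)+p_n(h)}$ and $g_0^{p_n(g)+p_n(h)+2}$, one gets $p_n(g)+p_n(h)\leq p_n(gh)\leq p_n(g)+p_n(h)+1$, whence $\phi(gh)=\phi(g)+\phi(h)$. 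Injectivity follows as you say from Archimedicity ($g^m\succ g_0$ forces $\phi(g)\geq 1/m$), and commutativity of $G$ then drops out of the embedding into $(\R,+)$. With that lemma supplied, your outline is a complete and correct proof.
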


H\"older's Theorem can be improved to say that $G$ is conjugate into the additive group $\R$, provided the action is assumed to by by $C^2$ diffeomorphisms.

\begin{thm}[Denjoy's Theorem]\label{thm:denjoy}
Let $G<\Homeo^+(S^1)$ be a countable group which acts freely on $S^1$. Then $G$ is abelian.
\end{thm}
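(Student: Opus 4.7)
The plan is to reduce Denjoy's Theorem to H\"older's Theorem (Theorem~\ref{thm:holder}) via the universal cover. Write $S^1 = \R/\Z$ with covering map $\pi \colon \R \to S^1$, and let $T \colon \R \to \R$ denote the translation $T(x) = x + 1$. I would introduce the group
\[ \widetilde{G} = \{\, \tilde g \in \Homeo^+(\R) \mid \tilde g T = T \tilde g \text{ and the induced map on } S^1 \text{ lies in } G \,\}. \]
The defining conditions are preserved by composition and inversion, so $\widetilde{G}$ really is a subgroup of $\Homeo^+(\R)$, and standard covering space theory identifies it with the group of all lifts of elements of $G$, yielding a central extension
\[ 1 \to \langle T \rangle \to \widetilde{G} \to G \to 1. \]
Since each element of $G$ has a countable set of lifts (one per integer translate), countability of $G$ forces countability of $\widetilde{G}$.

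The main step is verifying that $\widetilde{G}$ acts freely on $\R$. Suppose $\tilde g(x) = x$ for some $\tilde g \in \widetilde{G}$ and some $x \in \R$. If $\tilde g$ descends to the identity of $G$, then $\tilde g = T^n$ for some $n \in \Z$, and the existence of the fixed point forces $n = 0$, so $\tilde g = \Id$. Otherwise, $\tilde g$ descends to a nontrivial $g \in G$, and applying $\pi$ to both sides of $\tilde g(x) = x$ yields $g(\pi(x)) = \pi(x)$, contradicting the hypothesis that $G$ acts freely on $S^1$.

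With freeness of the $\widetilde{G}$-action on $\R$ in hand, H\"older's Theorem applied to the countable group $\widetilde{G}$ gives that $\widetilde{G}$ is abelian, and therefore so is its quotient $G$. I do not expect any real obstacle here: the argument is essentially formal covering-space bookkeeping, the only substantive input being the freeness check in the preceding paragraph, which is precisely where the two pieces of information -- that integer translations act freely on $\R$ and that $G$ acts freely on $S^1$ -- combine.
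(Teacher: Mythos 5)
Your argument is correct. The paper states Denjoy's Theorem (in this ``free action implies abelian'' form) as a classical fact without proof, citing general references, so there is no in-text argument to compare against; your reduction to H\"older's Theorem via the group of all lifts is the standard one, and every step checks out: $\widetilde{G}$ is a countable subgroup of $\Homeo^+(\R)$ (lifts of orientation-preserving circle homeomorphisms are increasing and commute with the deck transformation $T$), the freeness verification correctly splits into the deck-transformation case and the case of a nontrivial image in $G$, and abelianness passes to the quotient $G \cong \widetilde{G}/\langle T\rangle$. The only point you gloss over is why every lift of an orientation-preserving circle homeomorphism commutes with $T$ (degree one: $\tilde g(x+1)-\tilde g(x)$ is a continuous integer-valued, hence constant, function equal to $1$), but this is routine.
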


As with H\"older's Theorem, if the action is assumed to be by $C^2$ diffeomorphisms then $G$ is conjugate into the group $S^1$.

We turn our attention general group actions on the line and on the circle. A group $G$ is \emph{left orderable} if there exists a total ordering $\leq$ on $G$ which is left invariant, i.e. for all $a,b,g\in G$ we have $a\leq b$ if and only if $ga\leq gb$. A group $G$ is \emph{cyclically orderable} if there exists a function $\omega\colon G^3\to \{\pm 1\cup 0\}$ which is:
\begin{enumerate}
\item
Left invariant;
\item
\emph{Cyclic}, i.e. $\omega(a,b,c)=\omega(b,c,a)$;
\item
\emph{Asymmetric}, i.e. $\omega(a,b,c)=-\omega(a,c,b)$;
\item
\emph{Transitive}, i.e. $\omega(a,b,c)=1$ and $\omega(a,c,d)=1$ implies $\omega(a,b,d)=1$;
\item
\emph{Total}, i.e. $\omega(a,b,c)=\pm 1$ whenever $a,b,c$ are distinct.
\end{enumerate}

Intuitively, there is a left invariant choice of orientation on every (cyclically) ordered triple in $G$.
Note that a left orderable group is automatically cyclically orderable. If $a< b< c$ we define $\omega(a,b,c)=1$, and the remaining values of $\omega$ are determined by this condition.

The following result is oftentimes called the \emph{dynamical realization} of an order, and was for a long time a folklore theorem. In the statement, we denote the closed unit interval by $I$.

\begin{thm}[See~\cite{DeroinNavasRivas,Navas2011}, for instance]\label{thm:dyn realization}
Let $G$ be a countable group.
\begin{enumerate}
\item
The group $G$ injects into $\Homeo^+(\R)$ and $\Homeo^+(I)$ if and only if $G$ is left orderable;
\item
The group $G$ injects in $\Homeo^+(S^1)$ if and only if $G$ is cyclically orderable.
\end{enumerate}
\end{thm}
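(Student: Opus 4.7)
The plan is to handle the two statements separately, with each direction proved by the standard dynamical realization argument. For (1) the forward direction goes as follows: given an embedding $\phi\colon G\hookrightarrow\Homeo^+(\R)$, pick a countable dense sequence $\{x_n\}_{n\ge 1}$ in $\R$. For distinct $g,h\in G$, the element $g^{-1}h$ is a nontrivial orientation-preserving homeomorphism, so by continuity it cannot fix every $x_n$; let $n(g,h)$ be the least index with $g(x_n)\neq h(x_n)$, and declare $g<h$ iff $g(x_{n(g,h)})<h(x_{n(g,h)})$. Totality and asymmetry are immediate, and left-invariance follows because any $f\in G$ acts by an orientation-preserving homeomorphism and therefore preserves $<$ on each $\{g(x_n)\}$. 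Transitivity requires a short case analysis at the minimum of the three relevant indices. For $\Homeo^+(I)$, identify $I$ with a closed bounded interval containing $\{x_n\}$ and run the same argument.

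For the converse of (1), enumerate $G=\{g_1=1,g_2,g_3,\ldots\}$ and build an order-preserving map $t\colon G\to\R$ inductively: place $t(g_1)=0$, and at stage $n+1$ place $t(g_{n+1})$ at the midpoint of the gap in $\{t(g_1),\ldots,t(g_n)\}$ determined by the order of $g_{n+1}$ among $g_1,\ldots,g_n$ (using, say, position $\pm(n+1)$ when $g_{n+1}$ is extremal). Define $\phi(g)$ on $t(G)$ by $\phi(g)(t(h))=t(gh)$; left-invariance makes this order-preserving and the injectivity of $t$ makes it a group action. Extend $\phi(g)$ continuously to $\overline{t(G)}$ and then by affine interpolation across the complementary intervals to obtain an element of $\Homeo^+(\R)$. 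Injectivity of $\phi$ follows from injectivity of $t$. To land in $\Homeo^+(I)$, conjugate by a homeomorphism $\R\to(0,1)$ and extend by the identity at the endpoints.

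For (2), the forward direction is analogous: given $\phi\colon G\hookrightarrow\Homeo^+(S^1)$ and a countable dense sequence $\{x_n\}\subset S^1$, for a triple $(g,h,k)$ of distinct elements of $G$ let $n(g,h,k)$ be the least $n$ such that $g(x_n),h(x_n),k(x_n)$ are pairwise distinct, and define $\omega(g,h,k)$ to be the cyclic orientation of that triple on $S^1$. Each axiom (cyclicity, asymmetry, transitivity, totality, left-invariance) reduces to the corresponding property of the cyclic order on $S^1$, using that each $\phi(f)$ preserves cyclic order.

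The converse of (2) is the main obstacle, because a cyclic order cannot be linearised on the nose. The standard device is to produce a central extension $1\to\Z\to\tilde G\to G\to 1$ in which $\tilde G$ is left-orderable and the $\Z$-action is by shifts compatible with the cyclic order. Concretely, realise $\tilde G$ as the set of pairs $(g,n)$ with multiplication twisted by the $2$-cocycle read off from $\omega$, and define a left order on $\tilde G$ by $(g,n)<(h,m)$ iff $n<m$, or $n=m$ and the cyclic position of $h$ relative to $g$ agrees with the chosen sign convention. Apply part (1) to embed $\tilde G$ in $\Homeo^+(\R)$ in such a way that the central $\Z$ acts by integer translation (this can be arranged by choosing the dynamical realization so that the orbit of $0$ under the center is $\Z$). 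Descend to $\R/\Z=S^1$ to obtain an injective homomorphism $G\hookrightarrow\Homeo^+(S^1)$. The main technical points are verifying the cocycle identity for the twisted extension and checking that the dynamical realization of $\tilde G$ can be normalised so that the central $\Z$ acts precisely by integer translations; once those are in place, everything descends cleanly.
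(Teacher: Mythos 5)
The paper itself does not prove this theorem; it cites \cite{DeroinNavasRivas,Navas2011}, and your proposal follows the standard dynamical-realization arguments from those sources. Part (1) in both directions, and the converse of part (2), are essentially correct as sketches: the lexicographic order built from a dense sequence of orbit points, the midpoint realization of a left order (where the one point deserving explicit care is that the order-preserving bijection of $t(G)$ extends without jumps to a homeomorphism of $\R$ --- this is precisely what the midpoint placement guarantees), and the passage through a left-orderable central extension $1\to\Z\to\tilde G\to G\to 1$ with the generator of $\Z$ positive, central and cofinal are all the standard route.

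The forward direction of part (2), however, has a genuine gap: the index $n(g,h,k)$ you use to define $\omega$ need not exist. Each set $\{x: g(x)=h(x)\}=\Fix(g^{-1}h)$ is a proper closed subset of $S^1$, but three such sets can cover the circle. Concretely, let $u,v\in\Homeo^+(S^1)$ be nontrivial and supported on disjoint open arcs whose closures cover $S^1$, and consider the triple $(1,u,uv)$ in $G=\langle u,v\rangle$: for every $x$ either $u(x)=x$ or $v(x)=x$, so either the first two or the last two of the points $x$, $u(x)$, $uv(x)$ coincide, and there is no $x$ (hence no $x_n$) at which they are pairwise distinct --- yet $\omega(1,u,uv)$ must be defined. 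The standard repair is the same central-extension device you use for the converse: lift $\phi(G)$ to the countable group $\tilde G\le\Homeo^+(\R)$ of homeomorphisms commuting with $x\mapsto x+1$ that cover elements of $\phi(G)$; by part (1) this $\tilde G$ is left-orderable, with the deck transformation $z$ positive, central and cofinal, and one then defines $\omega$ on $G=\tilde G/\langle z\rangle$ by comparing representatives chosen in the fundamental domain $\{g: 1\le g< z\}$. (One can instead patch the direct definition by a more careful recursion on the first index at which the three orbit points are not all equal, but ``pairwise distinct at some $x_n$'' is not enough.)
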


From Theorem~\ref{thm:dyn realization}, it is again easy to see that a (countable) left orderable group is automatically cyclically orderable. If $\phi\in\Homeo^+(\R)$, we compactify $\R$ with a point at infinity, and extend $\phi$ to $S^1$ by declaring it to fix the point at infinity.

\subsection{Implicit homeomorphism actions of right-angled Artin groups}

Theorem \ref{thm:dyn realization} says that the moment we find a left invariant ordering on a countable group $G$, then we have implicitly found an action of $G$ by homeomorphisms on $\R$. Here, we show how to produce many left orderings of a right-angled Artin groups. Recall that in Subsection~\ref{sssec:criterion}, we mentioned the fact that right-angled Artin groups are residually torsion--free nilpotent.

\begin{prop}[See~\cite{DDRW08}]\label{prop:rtfn}
Let $G$ be a finitely generated, residually torsion--free nilpotent group. Then $G$ is left orderable.
\end{prop}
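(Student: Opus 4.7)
The plan is to prove the proposition in two stages: first show that every torsion-free nilpotent group is left orderable, and then show that the class of left-orderable groups is closed under the "residually" operation (at least for countable groups), which applies here because finitely generated groups are countable.

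For the first stage, I would start with a torsion-free nilpotent group $N$ and refine the lower central series $N=\gamma_1\supset\gamma_2\supset\cdots\supset\gamma_c=1$ by passing to isolators $\sqrt{\gamma_i}=\{g\in N\mid g^n\in\gamma_i\text{ for some }n>0\}$, obtaining a central series whose successive quotients are torsion-free abelian. (In the finitely generated case these quotients are just $\Z^{n_i}$, and no isolator refinement is needed.) Each torsion-free abelian factor admits a bi-invariant total order, e.g. the lexicographic order coming from an embedding into a $\Q$-vector space. I would then combine these into a bi-invariant order on $N$ by the standard lexicographic recipe: declare $g>1$ if, letting $i$ be the largest index with $g\in\gamma_i$, the image of $g$ in $\gamma_i/\gamma_{i+1}$ is positive. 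Routine verification shows the positive cone is closed under multiplication and conjugation.

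For the second stage, since $G$ is residually torsion-free nilpotent, for each $1\neq g\in G$ I choose a normal subgroup $N_g\lhd G$ with $G/N_g$ torsion-free nilpotent and $g\notin N_g$. The diagonal map $G\to\prod_{g\neq 1}(G/N_g)$ is then injective. Each factor is left orderable by the first stage, so it suffices to observe that an arbitrary direct product of left-orderable groups is left orderable: well-order the index set, fix a left order on each factor, and define $x<y$ if in the least coordinate where they differ the inequality holds coordinatewise. Left invariance is immediate from left invariance in each factor. Restricting this order to $G$ gives the desired left order.

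The main delicate point is the first stage, namely ensuring the existence of a central series with torsion-free abelian factors; without the isolator refinement the lower central quotients of a general torsion-free nilpotent group can have torsion, which would break the lexicographic construction. The rest is bookkeeping: products of left-orderable groups are orderable by lex, subgroups inherit orders, and residual torsion-free nilpotence yields the required embedding into such a product. Being finitely generated (hence countable) is used only to make the well-ordering of the index set unproblematic and to connect to the Homeo${}^+(\R)$ discussion via Theorem~\ref{thm:dyn realization}.
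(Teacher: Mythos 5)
Your proof is correct, and it arrives at the same order the paper constructs, but by a more modular route. The paper works with a single nested chain of normal subgroups $\gamma_1(G)=G\supset\gamma_2(G)\supset\cdots$ with trivial intersection, torsion-free nilpotent quotients $G/\gamma_i(G)$, and central, torsion-free abelian successive quotients $\gamma_{i-1}(G)/\gamma_i(G)$; it then lexicographically orders $G$ directly along this chain, declaring $g<h$ according to the sign of $g^{-1}h$ in the first quotient where it survives. In effect this fuses your two stages: the chain is exactly the isolated lower central series you build in Stage~1, and residual torsion-free nilpotence is precisely what forces $\bigcap_i\gamma_i(G)=1$, so no embedding into a product indexed by the nontrivial elements of $G$ is needed. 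Your version buys generality --- it isolates the fact that residually-$\mathcal{P}$ groups are orderable whenever $\mathcal{P}$ is a class of orderable groups, via closure of orderability under subgroups and well-ordered lexicographic products --- at the cost of an extra layer; the paper's version is leaner and makes the bi-invariance of the resulting order (the stronger conclusion it actually records) immediate from centrality of the successive quotients. Note that your construction also yields bi-invariance, since each factor order is bi-invariant and the lexicographic product of bi-orders is a bi-order, though you only claim left-orderability.

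One small correction: the parenthetical assertion that finitely generated torsion-free nilpotent groups need no isolator refinement is false. The group $\langle x,y,z\mid [x,y]=z^2,\ [x,z]=[y,z]=1\rangle$ is finitely generated, torsion-free, and nilpotent of class two, yet its abelianization is $\Z^2\times\Z/2\Z$, so the lower central quotients are not torsion-free. This does not damage your argument, since you carry out the isolator refinement in any case; it just means the refinement is genuinely necessary even in the finitely generated setting.
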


In the proof we will show that $G$ is in fact bi--orderable, i.e. admits a total order which is left invariant and conjugacy invariant.

\begin{proof}[Proof of Proposition~\ref{prop:rtfn}]
Choose a sequence of nested normal subgroups $\{\gamma_i(G)\}_{i\geq 1}$ of $G$ such that $\gamma_1(G)=G$, such that \[\bigcap_{i\geq 1}\gamma_i(G)=1,\] such that $G/\gamma_i(G)$ is torsion--free nilpotent for each $i$, and such that \[\gamma_{i-1}(G)/\gamma_i(G)<Z(G/\gamma_i(G)).\] Note that $\gamma_{i-1}(G)/\gamma_i(G)$ is a finitely generated, torsion--free abelian group for each $i\geq 2$, and it is easy to see that any such group admits a bi--invariant ordering. For each $i$, we choose such an ordering arbitrarily. Now if $g,h\in G$ are distinct, choose $i$ minimal such that $1\neq g^{-1}h\in G/\gamma_i(G)$. We declare $g<h$ if and only if $1<g^{-1}h\in\gamma_{i-1}(G)/\gamma_i(G)$. It is straightforward to check that this is a bi--invariant ordering on $G$.
\end{proof}

\subsection{Explicit homeomorphism actions of right-angled Artin groups}

In this subsection, we produce an explicit embedding $A(\gam)\to\Homeo^+(\R)$ which will be illustrative for our discussion in the sequel. The construction we give here can be found in~\cite{BKK2014}. We begin with some facts about the combinatorial group theory of right-angled Artin groups.

Let $1\neq w\in A(\gam)$ be a reduced word. We say that $w$ is a \emph{central word} if $\supp w$ is a complete subgraph of $A(\gam)$. Since singleton vertices are complete subgraphs of $\gam$, any element of $A(\gam)$ can be written as a product of central words. Now, let \[w=w_k\cdots w_1\] be a (reduced) product of central words. We say that $w$ is in \emph{left greedy normal form} if for each $i<k$ and each $v\in\supp w_i$, there exists a $v'\in\supp w_{i+1}$ such that $[v,v']\neq 1$.

\begin{prop}[cf.~\cite{Koberda2012}]\label{prop:leftgreedyexist}
Every reduced element $1\neq w\in A(\gam)$ can be written in left greedy normal form.
\end{prop}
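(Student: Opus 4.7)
My plan is to produce the left greedy normal form by a greedy rewriting algorithm on central-word decompositions of $w$, and verify termination by a monovariant argument. Since any single vertex is a clique, the trivial decomposition in which each $w_i$ is a single letter of a chosen reduced expression for $w$ provides an initial product $w = w_k \cdots w_1$ of central words. The goal is to rearrange this product so that each letter is pushed as far \emph{left} (i.e.\ into the highest-indexed syllable) as the commutation relations of $A(\Gamma)$ allow.

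I define the elementary \emph{left-shift move} as follows. Suppose there exist $i < k$ and $v \in \supp w_i$ such that $v$ commutes with every vertex of $\supp w_{i+1}$. Replace the pair $(w_{i+1}, w_i)$ by $(v \cdot w_{i+1},\, w_i')$, where $w_i'$ is obtained from $w_i$ by deleting one occurrence of $v^{\pm 1}$. Three checks must be made: first, $w_i'$ is still central because $\supp w_i' \subseteq \supp w_i$, which is a clique; second, $v \cdot w_{i+1}$ is central because $\{v\} \cup \supp w_{i+1}$ is a clique (either $v \in \supp w_{i+1}$ already, or $v$ is adjacent to every vertex of $\supp w_{i+1}$ by hypothesis); third, the product represents the same element of $A(\Gamma)$ since $v$ commutes past every letter of $w_{i+1}$. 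If $w_i'$ becomes trivial, it is deleted and the remaining syllables are renumbered.

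To prove termination, I introduce the monovariant
\[
\Phi(w_k \cdots w_1) \;=\; \sum_{i=1}^{k} i \cdot \ell(w_i),
\]
where $\ell(w_i)$ counts letters with multiplicity. A single left-shift move transfers one letter from syllable $i$ to syllable $i+1$, so $\Phi$ strictly increases by $1$ (renumbering after deletion of a trivial syllable can only increase $\Phi$ further). The total length $\ell(w_k) + \cdots + \ell(w_1)$ is invariant because the moves are pure commutations; this uses the hypothesis that $w$ is reduced together with the Hermiller--Meier criterion, which guarantees that no sequence of commutations applied to a reduced word can produce a freely reducible pair. Thus $\Phi$ is bounded above by $|w|^2$, and the algorithm must halt.

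When the algorithm halts, no left-shift is available, which says precisely that for each $i < k$ and each $v \in \supp w_i$ there exists $v' \in \supp w_{i+1}$ with $[v,v'] \neq 1$. That is the definition of left greedy normal form. The main obstacle in the argument is the preservation of reducedness along the rewriting: if a commutation ever created a cancelling adjacent pair $v \cdot v^{-1}$, the original word could have been shortened by commutations and free reductions, contradicting the reducedness of $w$ via Hermiller--Meier; once this point is granted the rest of the proof is the bookkeeping above.
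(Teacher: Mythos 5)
Your argument is the paper's argument: both proceed by greedily shifting letters into higher-indexed central syllables and observing that the process terminates, the only cosmetic difference being that the paper moves all occurrences of a vertex $v$ at once while you move one letter at a time. Your verification that the moves preserve the group element, centrality of the syllables, and (via Hermiller--Meier) the total length is correct, and your halting condition is exactly the definition of left greedy normal form.

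There is one slip in the termination bookkeeping. After deleting an empty syllable $w_j$, the syllables with index $i>j$ are renumbered to $i-1$, so $\Phi$ \emph{drops} by $\sum_{i>j}\ell(w_i)$; your parenthetical claim that renumbering ``can only increase $\Phi$ further'' is backwards, and taken literally it destroys the monotonicity of your monovariant. The repair is immediate: either keep empty syllables in place as placeholders until the algorithm halts (then $\Phi$ is genuinely strictly increasing and bounded by $k_0\cdot|w|\leq |w|^2$, where $k_0$ is the initial syllable count), or note that at most $k_0-1$ deletions can ever occur and that between consecutive deletions $\Phi$ strictly increases while remaining bounded, so the total number of moves is finite. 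With that one-line fix the proof is complete.
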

\begin{proof}[Sketch of proof]
Write $w=w_k\cdots w_1$ as a product of central words. For each $v\in\supp w_1$, find a vertex $v'\in\supp w_2$ such that $[v,v']\neq 1$. If no such vertex exists, move all the occurrences of $v$ (or $v^{-1}$) in $w_1$ into $w_2$. Repeat this process for each $w_i$ as many times as is necessary until no more vertices can be moved to the left. If $w_i$ is empty at any point in this process, we simply delete it from the expression of $w$. Since vertices are always moved only into central words with higher indices and since the index $k$ cannot increase, this process will terminate after finitely many steps.
\end{proof}

The following proposition is an immediate corollary of the definition of left greedy normal form.

\begin{prop}\label{prop:leftgreedy}
Let $1\neq w_k\cdots w_1\in A(\gam)$ be in left greedy normal form. Then for each $i$ there exists a vertex $v_i\in\supp w_i$ such that for each $i<k$ we have $[v_i,v_{i+1}]\neq 1$.
\end{prop}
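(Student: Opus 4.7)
The proposition is essentially an unpacking of the definition of left greedy normal form, and the plan is to construct the sequence $v_1,\ldots,v_k$ by a direct induction on $i$, working from right (small index) to left (large index).

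For the base case, I would pick $v_1 \in \supp w_1$ to be any vertex; since $w = w_k\cdots w_1$ is a (nontrivial) reduced expression, each $w_i$ is nonempty, so such a choice is possible. For the inductive step, suppose $v_i \in \supp w_i$ has been chosen for some $i < k$. By the definition of left greedy normal form applied to this particular $v_i$, there exists $v' \in \supp w_{i+1}$ with $[v_i,v'] \neq 1$; set $v_{i+1} := v'$. Iterating yields the required chain $v_1,\ldots,v_k$ satisfying $[v_i,v_{i+1}]\neq 1$ for all $i<k$.

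There is really no serious obstacle here: the content is entirely contained in the definition of left greedy normal form, which has been designed so that the witness at each step can be chosen depending on the previous choice. The only point that might warrant a brief sentence of commentary is that the construction makes genuine use of the asymmetry of the definition (the condition is stated as a universal-existential statement \emph{for each} $v\in \supp w_i$, rather than guaranteeing only some pair of non-commuting vertices), which is exactly what allows us to chain the witnesses together as we move from index $i$ to index $i+1$. Apart from this observation, the proof is a one-line induction.
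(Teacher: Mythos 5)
Your proof is correct and is exactly the chaining argument the paper has in mind: the paper gives no written proof, simply declaring the proposition an immediate corollary of the definition of left greedy normal form, and your right-to-left induction (choose $v_1$ arbitrarily, then use the universal--existential form of the definition to pick each successive witness $v_{i+1}$) is precisely that immediate argument. Your remark that the quantifier structure of the definition is what permits the witnesses to be chained is apt and is the whole content of the statement.
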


\begin{thm}\label{thm:raaghomeo}
Let $\gam$ be a finite simplicial graph. Then there exists an injective map $A(\gam)\to\Homeo^+(\R)$.
\end{thm}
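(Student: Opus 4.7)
The plan is to produce the embedding by hand, building for each generator $v \in V(\gam)$ an orientation-preserving homeomorphism $f_v$ of $\R$ and then using left greedy normal form to verify that no non-identity word collapses to the identity homeomorphism. The overall architecture is to organize $\R$ into a family of nested or interlocking open intervals indexed by combinatorial data attached to $\gam$, and to set up the $f_v$ as bump homeomorphisms whose supports sit in this family; commuting pairs $\{v,w\} \in E(\gam)$ should yield commuting dynamics by support-disjointness or by support-containment inside a fixed interval, while non-adjacent pairs should yield interacting dynamics witnessed by a test point moved by one homeomorphism out of a fixed interval of the other.

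First I would define $f_v$ for each vertex $v$ and verify by construction that the assignment $v \mapsto f_v$ respects the defining relations of $A(\gam)$. Concretely, for each edge $\{v,w\} \in E(\gam)$, I would arrange that either $\supp f_v$ and $\supp f_w$ are disjoint, or $\supp f_v$ lies inside a single component of $\R \setminus \supp f_w$ (and vice versa). Either arrangement forces $f_v f_w = f_w f_v$, so the map extends to a homomorphism $\rho \colon A(\gam) \to \Homeo^+(\R)$.

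Second, for faithfulness, given a reduced element $1 \ne w \in A(\gam)$, I would invoke Proposition~\ref{prop:leftgreedyexist} to write $w = w_k \cdots w_1$ in left greedy normal form, and then apply Proposition~\ref{prop:leftgreedy} to extract vertices $v_i \in \supp w_i$ with $[v_i, v_{i+1}] \ne 1$ for each $i < k$. Pick a basepoint $p \in \R$ lying in an open region moved non-trivially by $f_{v_1}$, and track the iterates $p_i := \rho(w_i \cdots w_1)(p)$. The key claim is that at each stage the non-commutation $[v_i, v_{i+1}] \ne 1$ forces $\rho(w_{i+1})$ to push $p_i$ into a region that subsequent letters $w_{i+2}, \ldots, w_k$ cannot reverse, so that $p_k = \rho(w)(p) \ne p$ and hence $\rho(w)$ is not the identity.

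The hard part will be the simultaneous design of the supports in the first step. One must realize the non-edge relations of $\gam$ dynamically while respecting all edge relations exactly: a naive approach in which $f_v$ is supported on a single interval $I_v$ with $I_v \cap I_w = \varnothing$ exactly when $\{v,w\} \in E(\gam)$ would require the complement graph $\gam^c$ to be an interval graph, which fails in general. The resolution is to use a hierarchical or recursive indexing of supports --- for instance by cliques of $\gam$ or by reduced words of $A(\gam)$ --- so that any pair of non-commuting generators can be witnessed by a test point at the appropriate level of the hierarchy, and the orbit argument driven by left greedy normal form goes through uniformly for every $w$.
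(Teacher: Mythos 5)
Your outline has the right local picture (chains of overlapping intervals, a test point pushed along the chain by the letters extracted from left greedy normal form), but it contains a genuine gap exactly where you flag ``the hard part'': the simultaneous design of a single system of supports $\{\supp f_v\}_{v\in V(\gam)}$ realizing all edge relations while witnessing every non-edge for every word at once. Your proposed resolution --- ``a hierarchical or recursive indexing of supports, for instance by cliques of $\gam$ or by reduced words'' --- is a placeholder, not an argument, and the subsequent orbit claim (that $[v_i,v_{i+1}]\neq 1$ forces $\rho(w_{i+1})$ to push $p_i$ into a region the later letters cannot reverse) depends entirely on that unconstructed arrangement. As you yourself observe, the naive single-interval version requires $\gam^c$ to be an interval graph, so something nontrivial must replace it, and nothing in the proposal does.

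The paper's proof sidesteps this global design problem entirely by proving the a priori weaker statement that $A(\gam)$ is residually $\Homeo^+(I)$: for each fixed $1\neq w=w_k\cdots w_1$ in left greedy normal form one builds a \emph{separate} action tailored to $w$. One chooses a chain of open intervals $J_1,\dots,J_k$ with $J_i\cap J_j\neq\varnothing$ exactly when $|i-j|\leq 1$, homeomorphisms $\phi_i$ supported on $J_i$ moving a basepoint $x_1\in J_1\setminus J_2$ successively into $J_{i+1}$, and lets the generator $v$ act by $\prod_{v_i=v}\phi_i^{\epsilon_i}$. The relations of $A(\gam)$ hold automatically: if $v$ and $v'$ are adjacent they can never occur as consecutive $v_i,v_{i+1}$ (those do not commute by Proposition~\ref{prop:leftgreedy}), so all the $\phi_i$'s assigned to $v$ commute with all those assigned to $v'$. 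Then $w(x_1)\in J_k\setminus J_{k-1}$, so $w$ survives in this action; a faithful action on $\R$ is assembled by letting $A(\gam)$ act on a countable disjoint union of intervals tiling $\R$, one for each nontrivial $w$. If you want to salvage your single-action approach, the cleanest fix is to recognize that the ``hierarchical indexing by reduced words'' you gesture at is precisely this: index the blocks of $\R$ by the elements of $A(\gam)$ and run the per-word construction on each block.
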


The proof will establish the following more precise statement: the group $A(\gam)$ is residually $\Homeo^+(I)$, i.e. for each $1\neq w\in A(\gam)$ there exists a homomorphism $\phi\colon A(\gam)\to\Homeo^+(I)$ such that $w\notin\ker\phi$. Since $A(\gam)$ is countable and since $\R$ can be written as a countable union of copies of $I$, this will suffice to prove Theorem~\ref{thm:raaghomeo}.

\begin{proof}[Proof of Theorem~\ref{thm:raaghomeo}]
Let $1\neq w\in A(\gam)$. By Proposition~\ref{prop:leftgreedyexist}, we can write $w=w_k\cdots w_1$ in left greedy normal form. Choose $v_i\in \supp w_i$ satisfying Proposition~\ref{prop:leftgreedy}. We define an action of $A(\gam)$ on $I$ as follows: first, choose a chain of $k$ open subintervals $\{J_1,\ldots, J_k\}$ of $I$ such that $J_i\cap J_{i+1}$ is a nonempty proper subinterval of $J_i$ and of $J_{i+1}$ for $i<k$, and such that $J_i$ and $J_{\ell}$ are disjoint otherwise. Moreover, we require these intervals to be increasing, i.e. the left and right endpoints of $J_i$ lie to the left of the left and right endpoints of $J_{i+1}$, respectively. Choose a point $x_1\in J_1\setminus J_2$ and increasing homeomorphisms $\phi_i$ supported exactly on $J_i$ satisfying the following: inductively define $x_{i+1}=\phi_i(x_i)$, we require $x_{i+1}\in J_{i+1}$, and $x_{k+1}\notin J_{k-1}$.

Let $\epsilon_i$ be the sign of $v_i$ as it occurs in $w_i$. We define the action of $v_i$ on $J_i$ to be via $\phi_i^{\epsilon_i}$. Then, we set the action of $v\in V(\gam)$ on $I$ to be by \[\prod_{v_i=v}\phi_i^{\epsilon_i}.\] Observe that this is a well--defined action of $A(\gam)$ on $I$. Moreover, it is clear that $x_{k+1}=w(x_1)\in J_k\setminus J_{k-1}$. Thus, if $k=1$ then there is nothing to check, and if $k>1$ then $x_{k+1}\neq x_1$, so that $w$ acts nontrivially on $I$.
\end{proof}

\section{Groups of $C^1$ diffeomorphisms}

We now turn our attention to actions of groups with regularity beyond mere continuity. Differentiability of a group action does impose nontrivial restrictions on groups which can act on a given one--manifold, though there are relatively few systematic approaches to ruling out the action of a particular group. One of the strongest tools in studying $C^1$ actions on one--manifolds is called Thurston Stability, which finds its origins in foliation theory. Let $G$ be a group acting on a topological space $X$, and let $x\in X$ be a global fixed point of $G$. We write $G_x$ for the group of \emph{germs} of $G$ at $x$. Namely, we consider the group $G$ modulo the equivalence relation $g\sim h$ if $g$ and $h$ agree on a neighborhood of $x$. One statement of Thurston Stability is the following:

\begin{thm}[Thurston Stability,~\cite{ThurstonStability}]
Let $G<\Diff^1(M)$ be finitely generated, where $M$ is a one--manifold, and let $x$ be a global fixed point of $G$. Then if $G_x$ is nontrivial we have $H^1(G_x,\R)\neq 0$.
\end{thm}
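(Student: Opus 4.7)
The plan is to construct a nontrivial homomorphism $G_x\to\R$ by extracting first--order information about $G$ at $x$. Choosing a chart identifying $x$ with $0\in\R$ and fixing a finite generating set $S$ of $G$, the derivative map $g\mapsto g'(0)$ is a homomorphism $G\to\R^*$ which, by the chain rule, depends only on the germ of $g$ at $0$. If this map is not identically $1$, then $g\mapsto\log|g'(0)|$ already gives a nontrivial homomorphism $G_x\to\R$ and the proof is finished. I would therefore reduce to the case that $g'(0)=1$ for every $g\in G$.

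In this main case, write $g(y)=y+\phi_g(y)$ with $\phi_g(y)=o(|y|)$ and execute a rescaling argument in the spirit of Thurston. Since $G_x$ is nontrivial, I can pick a sequence $y_n\to 0$ on which not every $\phi_s$ (for $s\in S\cup S^{-1}$) vanishes, and set
\[
a_n=\max_{s\in S\cup S^{-1}}|\phi_s(y_n)|>0.
\]
Define rescaled maps $\tilde s_n(t)=(s(y_n+a_nt)-y_n)/a_n$. Using $C^1$ regularity and $s'(0)=1$, one sees that $\tilde s_n'(t)=s'(y_n+a_nt)\to 1$ uniformly on compact $t$--sets while $\tilde s_n(0)\in[-1,1]$. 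Arzel\`a--Ascoli together with a diagonal extraction over all words in $S$ yields a subsequence along which $\tilde w_n$ converges uniformly on compact sets for each word $w$; the derivative convergence forces every limit to be a translation $t\mapsto t+c_w$. Composition makes $g\mapsto c_g$ additive, so it defines a homomorphism $\rho\colon G\to\R$. The choice of $a_n$ guarantees that $|c_s|=1$ for some generator $s$, so $\rho$ is nontrivial; and if $g$ is trivial on a neighborhood of $0$ then eventually $\phi_g(y_n)=0$ and $c_g=0$, so $\rho$ factors through $G_x$, yielding $H^1(G_x,\R)\neq 0$.

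The hard part will be the compactness step: establishing that the rescaled family is equicontinuous with Lipschitz constants tending to $1$, and arranging matters so that this uniformity survives composition as the words in the generators become arbitrarily long. One needs the domain of $\tilde s_n$, which grows like $O(1/a_n)$, to eventually exhaust any compact range required by a given fixed word $w$. This is exactly where the $C^1$ hypothesis is indispensable, since it provides the uniform control of $g'$ near $0$ needed to pass to translation limits. The argument genuinely breaks down for $C^0$ actions, consistent with the fact from Theorem~\ref{thm:dyn realization} that every countable left--orderable group embeds into $\Homeo^+(\R)$.
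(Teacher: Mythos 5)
Your proposal is correct and follows essentially the same route as the paper's sketch (itself following Calegari): first dispose of the case where the derivative homomorphism $g\mapsto\log g'(0)$ is nontrivial, then rescale near the fixed point and use Arzel\`a--Ascoli to extract a limiting action by translations, from which a nontrivial homomorphism $G_x\to\R$ is read off. The only cosmetic difference is that you obtain the homomorphism directly from the translation lengths $c_g$, whereas the paper invokes H\"older's Theorem at that final step; both are standard ways to finish, and you correctly identify the uniformity over word length in the compactness step as the genuinely delicate point.
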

\begin{proof}[Sketch of proof, following~\cite{Calegari2007}]
Assume $G_x$ is nontrivial. Without loss of generality, we may assume $x=0$. First, one may consider the homomorphism $G\to\R$ given by $\rho\mapsto\log\rho'(0)$. If this homomorphism is nontrivial then we are done, so we may assume that $G$ lies in the kernel. One then rescales the action of $G$ suitably and extracts a limiting action using Ascoli's Theorem, obtaining an action of $G_x$ by translations. H\"older's Theorem then implies that $G_x$ admits a nontrivial homomorphism to $\R$.
\end{proof}

In particular, $G$ is either trivial or surjects to $\Z$. D. Calegari was the first to produce an explicit finitely presented group of $\Homeo^+(S^1)$ which admits no faithful homomorphism to $\Diff^1(S^1)$ in~\cite{MR2218985}. In his example, he leveraged Thurston Stability by taking a certain perfect group $H$ (i.e. $H=[H,H]$)  which is naturally realized as the fundamental group of a certain $2$--dimensional hyperbolic orbifold and hence admits a faithful analytic action on the circle, and embedding it in a larger group $G$. He then shows that whenever $G$ acts faithfully on the circle, there is a point whose stabilizer is isomorphic to $H$, whence Thurston Stability precludes differentiability.

Thurston Stability says nothing about right-angled Artin groups acting by $C^1$ diffeomorphisms, since right-angled Artin groups are \emph{locally indicable}, i.e. every finitely generated subgroup surjects to $\Z$. Locally indicable groups, however, may not admit faithful $C^1$ actions on one--manifolds, as was first shown by A. Navas~\cite{Navas2010}.

It turns out that if $M$ is any one--manifold and $\gam$ is any finite simplicial graph, then $A(\gam)<\Diff^1(M)$. In fact, a stronger result holds:

\begin{thm}[Farb--Franks~\cite{FF2003}, Jorquera~\cite{Jorquera}]\label{thm:rtfn}
Let $G$ be a finitely generated residually torsion--free nilpotent group and let $M$ be a one--manifold. Then $G<\Diff^1(M)$.
\end{thm}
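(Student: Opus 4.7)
The plan is to reduce the theorem to the construction of sufficiently many $C^1$ actions on the closed interval $I$, along the lines of the proof of Theorem~\ref{thm:raaghomeo}. Concretely, it suffices to show that $G$ is \emph{residually} $\Diff^1_+(I)$, meaning that for every $1 \neq w \in G$ there is a homomorphism $\phi_w \co G \to \Diff^1_+(I)$ with $\phi_w(w) \neq \Id$. Given such a family, enumerate $G \setminus \{1\} = \{w_1,w_2,\ldots\}$, choose pairwise disjoint compact subintervals $I_1,I_2,\ldots \subset M$ converging to a point (if $M$ is compact) or escaping to infinity (if $M = \bR$), and place a copy of $\phi_{w_n}$ on $I_n$, extended by the identity outside. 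Uniform tangency to the identity near the endpoints of each $I_n$ must be arranged in order to obtain a $C^1$ action on all of $M$; this is done by pre-conjugating $\phi_{w_n}$ by a diffeomorphism $I_n \to I_n$ which is flat at the endpoints. The resulting product diffeomorphisms are automatically $C^1$ (in fact the derivative is identically $1$ at the accumulation points), and the global action is faithful because every nontrivial $w$ acts nontrivially on at least one $I_n$.

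Next I would reduce to the torsion-free nilpotent case. Given $1 \neq w \in G$, residual torsion-free nilpotence furnishes a quotient $q_w \co G \to N_w$ with $N_w$ finitely generated, torsion-free, and nilpotent, and with $q_w(w) \neq 1$. Thus, it is enough to prove the theorem under the additional hypothesis that $G$ itself is torsion-free and nilpotent: having embedded each $N_w$ in $\Diff^1_+(I)$, composition with $q_w$ yields the required $\phi_w$.

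For the torsion-free nilpotent case, I would invoke Mal'cev's theorem to embed $G$ into the group $\UT_n(\Z)$ of integer upper triangular matrices with $1$'s on the diagonal for some $n$, thereby reducing the problem to constructing a faithful $C^1$ action of $\UT_n(\Z)$ on $I$. This is done by induction on $n$, exploiting the central extension
\[
1 \to \Z^{n-1} \to \UT_n(\Z) \to \UT_{n-1}(\Z) \to 1.
\]
Inductively one places a $C^1$ action of $\UT_{n-1}(\Z)$ on an open subinterval of $I$, then extends by a carefully chosen family of $C^1$ bump diffeomorphisms implementing the central $\Z^{n-1}$ factor on a disjoint collection of fundamental domains for the $\UT_{n-1}(\Z)$ action, with supports shrinking rapidly toward a boundary point. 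Faithfulness on the new center follows because these bumps are arranged not to lie in the commutator subgroup of the extended action at the level of $\Homeo^+(I)$.

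The main obstacle is maintaining $C^1$ regularity through the inductive step. The higher the nilpotency class, the more delicate the balance between the size and shape of the bumps and the derivative estimates on commutators: by Thurston Stability one already knows that subtle obstructions lurk whenever a nontrivial $C^1$ action fixes a point, and for nilpotent groups iterated commutators impose quantitative constraints on derivatives near fixed points. The Farb--Franks construction, refined by Jorquera, handles this by specifying bump functions whose $C^1$-norms decay on a prescribed geometric scale, ensuring that the finitely many words witnessing the inductive faithfulness statement act with derivatives that are uniformly controlled and that the glued diffeomorphism is $C^1$ at the accumulation point of the supports. Once this technical core is established, the reductions above assemble into a proof of the theorem.
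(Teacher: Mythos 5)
Your proposal follows essentially the same route as the paper's sketch and as the actual Farb--Franks/Jorquera argument: reduce via residual torsion-free nilpotence and a gluing of actions on disjoint subintervals to the case of a finitely generated torsion-free nilpotent group, then build an explicit $C^1$ action of that group with controlled derivatives. The paper's sketch illustrates the nilpotent step differently (the two-step case, where H\"older's Theorem~\ref{thm:holder} and periodicity under conjugation are used to verify centrality of the commutator subgroup), while you go through Mal'cev and induction on $\UT_n(\Z)$; both are faithful to the source. Two corrections are worth making. First, the sequence $1\to\Z^{n-1}\to\UT_n(\Z)\to\UT_{n-1}(\Z)\to 1$ is \emph{not} a central extension: the kernel (last-column entries) is normal and abelian, but only the $(1,n)$ entry is central, so the bump diffeomorphisms implementing $\Z^{n-1}$ must transform under conjugation by $\UT_{n-1}(\Z)$ according to the standard module structure rather than commute with everything; the fundamental-domain mechanism you describe does accomplish this, but the word ``central'' misstates what is being arranged. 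Second, gluing actions on subintervals $I_n$ accumulating at a point is \emph{not} automatically $C^1$ even if each piece is tangent to the identity at its endpoints: one needs $\sup_{x\in I_n}|\phi_n(g)'(x)-1|\to 0$ for each generator $g$ (otherwise the glued map is differentiable but not $C^1$ at the accumulation point). You do state the correct condition in your final paragraph, but the earlier claim that tangency at endpoints suffices should be deleted; this uniform derivative control is precisely the ``subtle analytic estimate'' that constitutes the real content of the theorem.
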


Since right-angled Artin groups are manifestly finitely generated and residually torsion--free nilpotent, we obtain the following immediate corollary:

\begin{cor}\label{cor:raagc1}
Let $\gam$ be a finite simplicial graph and let $M$ be a one--manifold. Then $A(\gam)<\Diff^1(M)$.
\end{cor}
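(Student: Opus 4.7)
The plan is simply to verify that $A(\gam)$ satisfies the two hypotheses of Theorem~\ref{thm:rtfn} and then invoke that theorem directly. No new ideas are required; the corollary is essentially an instantiation of the preceding result.

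First I would observe that $A(\gam)$ is finitely generated: by definition it is generated by the vertex set $V(\gam)$, which is finite since $\gam$ is assumed finite. Second, I would appeal to the fact already cited in Subsection~\ref{sssec:criterion} that every right-angled Artin group is residually torsion--free nilpotent. If one wished to make this step self-contained rather than quoting \cite{MKSBook}, the standard route is to embed $A(\gam)$ into the group of units of an appropriate Magnus-style power series ring (sending each vertex $v$ to $1+X_v$, where the noncommuting indeterminates $X_v,X_w$ are declared to commute precisely when $\{v,w\}\in E(\gam)$) and then identify the intersection of the lower central series with the kernel of this map; the resulting quotients $\gamma_{i}(A(\gam))/\gamma_{i+1}(A(\gam))$ are free abelian, so each finite-step quotient $A(\gam)/\gamma_{i}(A(\gam))$ is torsion--free nilpotent, and any nontrivial $w\in A(\gam)$ survives in some such quotient.

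Having verified both hypotheses, Theorem~\ref{thm:rtfn} produces, for any one--manifold $M$, an injection $A(\gam)\hookrightarrow\Diff^{1}(M)$, which is exactly what the corollary asserts.

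The main obstacle, such as it is, lies entirely upstream in the proof of Theorem~\ref{thm:rtfn} itself, which is being used as a black box here; the residual torsion--free nilpotence of $A(\gam)$ is classical and the finite generation is immediate, so within the scope of this corollary there is no substantive difficulty to overcome.
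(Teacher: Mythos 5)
Your proposal is exactly the paper's argument: the paper derives this corollary immediately from Theorem~\ref{thm:rtfn} by noting that $A(\gam)$ is finitely generated and residually torsion--free nilpotent, which is precisely what you verify (your optional Magnus-style justification of residual torsion--free nilpotence is a correct elaboration of the fact the paper simply cites). No differences of substance.
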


The proof of Theorem~\ref{thm:rtfn} requires some rather subtle analytic estimates, so we will only outline the basic idea in a special case. Let $\lambda$ be a positive real number and let $N$ be a given finitely generated torsion--free nilpotent group. For simplicity, we will assume that the lower central series of $N$ has length two, so that $[N,N]<Z(N)$. The goal is to produce a faithful action of $N$ on an interval of length $\lambda$ so that the generators of $N$ act by $C^1$ diffeomorphisms which are tangent to the identity at the endpoints of the interval. One carefully chooses one homeomorphism for each generator of $N$ which is fully supported on the interval and tangent to the identity at the endpoints, so that the commutator subgroup of $N$ is supported on a countable union of subintervals. With care, one can arrange for the commutator subgroup to act freely on each interval of its support so that it is abelian by H\"older's Theorem (see Theorem~\ref{thm:holder}), and one can arrange the commutator subgroup to be periodic with respect to the conjugation action of the generators of $N$. This way, the commutator subgroup will commute with the conjugation action of $N$ and hence lie in the center of $N$.

\section{Groups of $C^2$ diffeomorphisms}

Increasing the regularity of a group action to twice--differentiability, many fewer groups can act on one--manifolds. One of the most powerful tools in this situation is Kopell's Lemma~\cite{Kopell1970}, which can be stated as follows:

\begin{thm}\label{thm:kopell}
Let $f,g\in\Diff^2(I)$ be such that $[f,g]=1$. Suppose $\Fix f\cap (0,1)=\varnothing$ and $\Fix g\cap (0,1)\neq\varnothing$. Then $g=1$.
\end{thm}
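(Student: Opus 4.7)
My plan is to reduce to the first fundamental domain of $f$ near its attracting fixed point, exploit the $C^2$ distortion estimate for iterates of $f$ to obtain a uniform Lipschitz bound on the iterates of $g$, and then derive a contradiction via Arzela--Ascoli.

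After replacing $f$ by $f^{-1}$ if necessary, I would assume $f(x)<x$ on $(0,1)$, so that $f^n(x)\to 0$ for every $x\in[0,1)$. Let $p\in(0,1)$ be a fixed point of $g$ and set $p_n=f^n(p)\searrow 0$. Commutativity gives $g(p_n)=p_n$, so $g$ preserves each fundamental domain $I_n=[p_{n+1},p_n]$, with $g|_{I_n}=f^n\circ g|_{I_0}\circ f^{-n}$. The analogous argument with $f^{-1}$ and the fixed point $p_0$ handles $[p_0,1]$, so it suffices to show $g|_{I_0}=\Id$. If this fails, pick a maximal component $(a,b)\subset I_0$ of $I_0\setminus\Fix(g)$ on which, WLOG after possibly replacing $g$ by $g^{-1}$, we have $g(x)>x$. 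Then $g^N(x)\to b$ pointwise on $(a,b)$ while $g^N(a)=a$. If the family $\{g^N|_{[a,b]}\}_N$ can be shown to be uniformly $C$-Lipschitz for some fixed $C$, Arzela--Ascoli supplies a uniformly (hence continuously) convergent subsequence, but the pointwise limit is the step function sending $a\mapsto a$ and $(a,b]\mapsto b$, which is discontinuous at $a$ --- contradiction.

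The Lipschitz bound is the heart of the argument and is where $C^2$-regularity is used. The standard telescoping for $C^2$ diffeomorphisms, together with pairwise disjointness of the $I_k$ in $[0,p_0]$, gives the bounded distortion estimate
$$\left|\log\frac{(f^n)'(x)}{(f^n)'(y)}\right|\leq \|f''/f'\|_\infty\cdot p_0 =\log C \quad\text{for all }x,y\in I_0,\ n\geq 0.$$
Differentiating the commutation $g^N\circ f^n=f^n\circ g^N$ at $x\in(a,b)$ yields
$$(g^N)'(f^n(x))=(g^N)'(x)\cdot\frac{(f^n)'(g^N(x))}{(f^n)'(x)},$$
and since $x,g^N(x)\in(a,b)\subset I_0$ the distortion estimate bounds the ratio. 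Rearranging and letting $n\to\infty$ (so $f^n(x)\to 0$), continuity of $(g^N)'$ together with $g(0)=0$ give
$$(g^N)'(x)\in [C^{-1}\,g'(0)^N,\,C\,g'(0)^N]$$
uniformly on $(a,b)$. Integrating this over $(a,b)$ and using $g^N(b)-g^N(a)=b-a$ forces $g'(0)^N\in[C^{-1},C]$ for all $N$, hence $g'(0)=1$ and $(g^N)'|_{(a,b)}\in[C^{-1},C]$ uniformly in $N$, yielding the desired Lipschitz bound.

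The main obstacle I expect is precisely this propagation of distortion: keeping the bounds uniform in both the $f$-iteration parameter $n$ and the $g$-iteration parameter $N$, and carefully tracking that all the relevant points remain in $I_0$ so that the distortion inequality applies. Once that bookkeeping is clean, the use of $C^2$-regularity is concentrated in the single distortion estimate for $f$, and the Arzela--Ascoli endgame is soft.
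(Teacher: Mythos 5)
Your proof is correct and rests on essentially the same mechanism as the paper's (which follows Calegari): the $C^2$ bounded-distortion estimate for $(f^n)'$ obtained by telescoping $\log f'$ over the pairwise disjoint fundamental domains $I_k$, combined with differentiating the commutation relation $g^N\circ f^n=f^n\circ g^N$ to transport derivative information of $g$ along the $f$-orbit toward the fixed endpoint. The only difference is in the endgame: you convert this into a uniform Lipschitz bound for $\{g^N\}$ on a gap $(a,b)$ of $\Fix g$ and contradict the convergence $g^N\to b$ via Arzel\`a--Ascoli, whereas the paper contradicts the uniformly bounded ratio $(g^m)'(x)/(g^m)'(f^n(x))$ directly by making the numerator small near $p$ and the denominator close to $1$ near $0$ --- the same idea packaged slightly differently.
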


A particularly efficient proof of Theorem~\ref{thm:kopell} is given by Calegari~\cite{Calegari2007}, which we reproduce here for the convenience of the reader. First, we require the following fact which is very easy but nevertheless useful:

\begin{lem}\label{lem:fix}
Let $X$ be a set and let $f,g\in\Sym(X)$ be such that $[f,g]=1$. Then $f(\Fix g)\subset\Fix g$.
\end{lem}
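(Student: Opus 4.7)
The plan is to unwind the definition of the fixed set and use the commutation relation directly; this is essentially a one-line verification, so the main task is to organize it cleanly rather than to overcome any real obstacle.

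First I would fix an arbitrary point $x \in \Fix g$, so that by definition $g(x) = x$. The goal is then to verify that $f(x) \in \Fix g$, which amounts to checking the single equation $g(f(x)) = f(x)$.

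Next I would invoke the hypothesis $[f,g] = 1$, which in $\Sym(X)$ translates to the equality of bijections $fg = gf$. Evaluating both sides at $x$ gives $f(g(x)) = g(f(x))$. Since $x \in \Fix g$, the left-hand side is simply $f(x)$, so $g(f(x)) = f(x)$, as desired. Therefore $f(x) \in \Fix g$, and since $x$ was arbitrary we conclude $f(\Fix g) \subseteq \Fix g$.

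There is no serious obstacle here: the lemma is really just the observation that centralizing elements permute each other's fixed sets, and the argument uses nothing beyond the definition of $\Fix$ and the defining relation $fg = gf$. In particular, no hypothesis on $X$ (topology, smoothness, orderability) is needed, which is exactly why the lemma is stated at the level of $\Sym(X)$ and then later applied in the $\Diff^2(I)$ setting of Kopell's Lemma.
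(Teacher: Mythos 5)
Your proof is correct and is exactly the paper's argument: starting from $g(x)=x$, one computes $f(x)=f(g(x))=g(f(x))$, so $f(x)\in\Fix g$. Nothing further is needed.
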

\begin{proof}
Let $x\in Fix(g)$. Then $f(x)=f(g(x))=g(f(x))$, so that $f(x)\in\Fix g$.
\end{proof}

\begin{proof}[Proof of Theorem~\ref{thm:kopell}]
Note that $f$ acts on $(0,1)\cong\R$, and since $f$ is fixed point free, it is topologically conjugate to translation by $1$ by a homeomorphism $h\colon (0,1)\to\R$. A \emph{fundamental domain} for the action of $f$ will be defined to be the subinterval $J=(h^{-1}(0),h^{-1}(1))$. By definition, $f^n(J)\cap J=\varnothing$ for any $n\neq 0$.

The assumption that $f$ is $C^2$ implies that the function $\log|f'|$ is Lipschitz and is hence of bounded variation on $I$. Choosing $a,b\in J$, this implies that \[\sum_{i=-\infty}^{\infty}\bigg\vert\log|f'(f^i(a))|-\log|f'(f^i(b))|\bigg\vert\leq K\] for some constant that is independent of both $a$ and $b$. From this estimate and the chain rule, there exists a positive constant $C$ such that \[\frac{1}{C}\leq \Bigg\vert\frac{(f^n)'(a)}{(f^n)'(b)}\Bigg\vert\leq C,\] independently of $n\in \Z$. From the chain rule and the fact that $\phi=g^m$ commutes with $f$, we obtain \[\frac{(f^n)'(x)}{(f^n)'(\phi(x))}=\frac{\phi'(x)}{\phi'(f^n(x))}\] for every $n\in\Z$.

Suppose for a contradiction that $g$ is not the identity. Since $g$ has at least one fixed point, we may assume that $g$ has a fixed point $p$ such that for some $x$ near $p$ we have $g^k(x)\to p$ as $k\to\infty$. Note that if $x$ was chosen sufficiently close to $p$, we have that the expression \[\Bigg\vert\frac{(f^n)'(x)}{(f^n)'(\phi(x))}\Bigg\vert\] is bounded away from zero.

Since $p$ is fixed, we can fix $\epsilon>0$ and find an $m$ such that \[\frac{|g^m(x)-p|}{|x-p|}<\epsilon.\] In particular, for any $\delta>0$, we can choose an $m>0$ such that \[|\phi'(x)|=|(g^m)'(x)|<\delta.\]

Now, applying Lemma~\ref{lem:fix}, we have that fixed points of $g$ accumulate at $0$. Since $g$ is $C^1$, it follows that $\phi'(0)=1$. Thus for a fixed $\delta>0$, there is an $n$ such that $|\phi'(f^n(x))|\geq 1-\delta$. But then we have the estimate \[\Bigg\vert\frac{\phi'(x)}{\phi'(f^n(x))}\Bigg\vert\leq\frac{\delta}{1-\delta}.\] Since $\delta$ was arbitrary, this is a contradiction.
\end{proof}

\subsection{Nilpotent groups and the Plante--Thurston Theorem}

By Theorem~\ref{cor:raagc1}, we have that every right-angled Artin group acts faithfully by $C^1$ diffeomorphisms on every one--manifold. We first show that the methods of Theorem~\ref{thm:rtfn} do not generalize to $C^2$ diffeomorphisms. This is the content of the Plante--Thurston Theorem and its generalization to $\R$ due to Farb--Franks.

\begin{thm}[Plante--Thurston~\cite{PT1976}, cf.~\cite{Navas2011}]\label{thm:ptcompact}
Let $N<\Diff^2(M)$ be a nilpotent subgroup, where $M$ is a compact, connected one--manifold. Then $N$ is abelian.
\end{thm}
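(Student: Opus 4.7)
\emph{Reduction to $M = I$.} The compact connected one-manifolds are $I = [0,1]$ and $S^1$. Since nilpotent groups are amenable, the action of $N$ on $S^1$ admits an invariant Borel probability measure, yielding a rotation number homomorphism $\rho \co N \to \R/\Z$ with $[N,N] \subseteq \ker \rho$. A standard case analysis (either the action is semiconjugate to an abelian subgroup of rotations, or $N$ has a finite orbit on $S^1$; in the latter case, pass to a finite-index subgroup and cut at a global fixed point) reduces the theorem to the case $M = I$. So assume $N \leq \Diff^2_+(I)$ is nilpotent and, for contradiction, of class $k \geq 2$. Choose $h \in \gamma_k(N) \setminus \{1\}$; the centrality of $\gamma_k(N)$ in $N$ (as $\gamma_{k+1}(N) = 1$) gives $h \in Z(N)$, and by construction $h \in [N,N]$. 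The proof then turns on whether $h$ has interior fixed points on $I$.

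\emph{Case A: $\Fix(h) \cap (0,1) = \varnothing$.} Then $h$ acts freely on $(0,1) \cong \R$ and commutes with every $\psi \in N$. Kopell's Lemma (Theorem~\ref{thm:kopell}), applied with $h$ playing the role of the fixed-point-free element, forces every nontrivial $\psi \in N$ to satisfy $\Fix(\psi) \cap (0,1) = \varnothing$, so $N$ acts freely on $(0,1)$. Since abelianness is detected on $2$-generator (hence countable) subgroups, H\"older's Theorem (Theorem~\ref{thm:holder}) applied to each such subgroup yields that $N$ is abelian, contradicting $h \in [N,N] \setminus \{1\}$.

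\emph{Case B: $\Fix(h) \cap (0,1) \neq \varnothing$.} By Lemma~\ref{lem:fix}, $\Fix(h)$ is a closed $N$-invariant subset of $I$ containing $\{0,1\}$. Let $p = \min(\Fix(h) \cap (0,1])$, which exists by closedness and lies in $(0,1)$. For any $\psi \in N$, $\psi$ is order-preserving with $\psi(0)=0$, so $\psi$ permutes $\Fix(h) \cap (0,1]$; the minimality of $p$ forces $\psi(p) \geq p$, and applying this to $\psi^{-1}$ gives $\psi(p)=p$. Hence $(0,p)$ is an $N$-invariant connected component of $I \setminus \Fix(h)$ on which $h$ is fixed-point-free. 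Restricting to $[0,p]$, the image group $N|_{[0,p]} \leq \Diff^2_+([0,p])$ is nilpotent and contains $h|_{[0,p]}$ as a nontrivial, central, interior-fixed-point-free element; the Case A argument then applies and shows $N|_{[0,p]}$ is abelian. But then $h|_{[0,p]} \in [N|_{[0,p]}, N|_{[0,p]}] = \{\mathrm{id}\}$, contradicting the fixed-point-freeness of $h|_{(0,p)}$. The principal technical obstacle is the $M = S^1$ reduction, requiring careful bookkeeping of nilpotency under passage to finite-index subgroups and rotation number quotients; the interval argument itself, once one spots the leftmost fixed-point trick of Case B, is a clean pairing of Kopell's Lemma with H\"older's Theorem.
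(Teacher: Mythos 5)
Your overall strategy for $M=I$ --- a nontrivial element $h\in\gamma_k(N)\cap Z(N)\cap[N,N]$, Kopell's Lemma to force the action to be free on $(0,1)$, then H\"older --- is exactly the paper's, and your Case A is correct. But Case B breaks at its very first step: the minimum $p=\min\bigl(\Fix(h)\cap(0,1]\bigr)$ need not exist. The set $\Fix(h)$ is closed in $[0,1]$, but its intersection with the half-open set $(0,1]$ is not, and its infimum can be $0$ without being attained --- for instance if $h$ is the identity on $[0,1/2]$ and nontrivial on $[1/2,1]$, or if $h$ fixes the points $1/n$. In that situation there is no leftmost interior fixed point and no leftmost component of $I\setminus\Fix(h)$, hence no canonical $N$-invariant interval $[0,p]$; the components of $I\setminus\Fix(h)$ are merely permuted by $N$, and a priori none of them is preserved. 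So the ``leftmost fixed-point trick'' does not cover all of Case B.

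The repair requires a further dynamical idea, which is the one the paper uses. First reduce to the case where $N$ has no global fixed point in $(0,1)$ (restrict $N$ to the closures of the components of the complement of its global fixed set; $N$ injects into the product of these restrictions, so it suffices to show each is abelian). Now suppose $h$ has an interior fixed point and pick $x\in\partial\Fix(h)\cap(0,1)$, which exists since $\Fix(h)\cap(0,1)$ is nonempty, closed in $(0,1)$, and proper, hence not open. If some $\psi\in N$ has $\psi(x)\neq x$, the orbit $\{\psi^n(x)\}$ is monotone and converges as $n\to\pm\infty$ to the endpoints $a<b$ of the component of $I\setminus\Fix(\psi)$ containing $x$; since $h(x)=x$ and $[h,\psi]=1$, $h$ fixes the whole orbit and hence $a$ and $b$. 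Kopell's Lemma on $[a,b]$, with $\psi$ fixed-point free on $(a,b)$ and $h$ fixing $x\in(a,b)$, gives $h|_{[a,b]}=\mathrm{id}$, contradicting $x\in\partial\Fix(h)$. Thus $x$ is a global fixed point of $N$, contradicting the reduction, so Case B cannot occur and Case A finishes. (Separately, your $S^1$ reduction is only a sketch --- in particular ``abelian of finite index implies abelian'' requires an argument even for nilpotent subgroups of $\Diff^2(S^1)$ --- but the paper itself only treats $M=I$, so you are going beyond the source there.)
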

\begin{proof}
We will prove the result in the special case where $M=I$. We claim that for each $1\neq f\in N$, we have $\Fix f\cap (0,1)=\varnothing$. This will prove the result, since Theorem~\ref{thm:holder} then implies that $N$ is abelian.

We assume that $N$ has no global fixed points in $I$, and let $g\in Z(N)$ be a central element. Suppose that $1\neq f\in N$ has at least one fixed point $x\in (0,1)$. Assume that $x\in\partial\Fix f$. We claim that in this case, $g(x)=x$. Otherwise, there is an interval $[a,b]$ with nonempty interior such that \[\{a,b\}=\{\lim_{n\to\pm\infty}g^{n}(x)\}.\] Since $g$ and $f$ commute, Lemma~\ref{lem:fix} implies that $f$ fixes both $a$ and $b$. Kopell's Lemma (Theorem~\ref{thm:kopell}) now implies that if $g(x)\neq x$ then $f$ is the identity on $[a,b]$. Since we assume $x\in\partial\Fix f$ and since $x\in(a,b)$, this is a contradiction and we conclude that $g(x)=x$.

Let $y\in\Fix g\cap (0,1)$ be a boundary point of the fixed point set of $g$. Since $g$ is in the center $N$, we have that $h(y)=y$ for all $h\in N$, by the same argument as before. It follows that $y$ is a global fixed point of $N$, a contradiction.
\end{proof}

The following generalization of Theorem~\ref{thm:ptcompact} holds in the noncompact setting:

\begin{thm}[See~\cite{FF2003}]\label{thm:ptr}
Let $N<\Diff^2(\R)$ be a nilpotent subgroup. Then $N$ is metabelian, i.e. $[N,N]$ is abelian.
\end{thm}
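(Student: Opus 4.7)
My strategy parallels the proof of Theorem~\ref{thm:ptcompact}, but has to accommodate the failure of Kopell's lemma for noncompact intervals. I would carry out a reduction to the case of no global fixed points, exhibit a fixed-point-free central element, and then quotient to reduce to the compact Plante--Thurston theorem.

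\emph{Stage 1 (Reduction to no global fixed points).} Let $F=\Fix(N)\subseteq\R$. The complement $\R\setminus F$ is a countable disjoint union of open $N$-invariant intervals. On each bounded component $J$, the closure $\overline{J}$ is a compact interval with $\partial\overline{J}\subset F$ fixed by $N$, so $N|_{\overline{J}}$ is a nilpotent subgroup of $\Diff^2(\overline{J})$; Theorem~\ref{thm:ptcompact} forces $N|_{\overline{J}}$ to be abelian, and hence $[N,N]$ to act trivially on $\overline{J}$. Since $[N,N]$ also acts trivially on $F$, it suffices to show that $[N,N]$ acts abelianly on each unbounded component, which reduces the theorem to the case that $N$ acts on a copy of $\R$ without any global fixed point.

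\emph{Stage 2 (A central element is fixed-point-free).} Assume now that $N$ acts on $\R$ without global fixed points. Since $N$ is nilpotent, $Z(N)$ is nontrivial; pick $g\in Z(N)\setminus\{1\}$. I claim that $g$ has no fixed point. Otherwise, choose $x\in\partial\Fix(g)$. By the no-global-fixed-point hypothesis some $h\in N$ satisfies $h(x)\neq x$, and since $g$ is central, $h$ commutes with $g$, so by Lemma~\ref{lem:fix} the orbit $\{h^n(x)\}_{n\in\Z}$ lies in $\partial\Fix(g)$. This orbit is monotone and converges to endpoints $a\in\R\cup\{-\infty\}$ and $b\in\R\cup\{+\infty\}$, with $h$ fixed-point-free on $(a,b)$. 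If both $a,b$ are finite, they are fixed by $h$, and Kopell's lemma (Theorem~\ref{thm:kopell}) applied on $[a,b]$ forces $g$ to be the identity on $[a,b]$, contradicting $x\in\partial\Fix(g)$. Granted this claim, the cyclic group $\langle g\rangle$ acts freely and properly discontinuously on $\R$, so $\R/\langle g\rangle$ inherits a smooth structure making it diffeomorphic to $S^1$. Centrality of $g$ ensures that $N/\langle g\rangle$ acts on this circle by $C^2$ diffeomorphisms, and a deck-transformation argument identifies the kernel of the induced action with $\langle g\rangle$. Applying Theorem~\ref{thm:ptcompact} to the nilpotent subgroup $N/\langle g\rangle<\Diff^2(S^1)$, we conclude it is abelian, whence $[N,N]\subseteq\langle g\rangle\cong\Z$ is abelian.

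\emph{Main obstacle.} The delicate step is the subcase of Stage 2 in which at least one of $a,b$ is infinite: Kopell's lemma as stated does not directly apply, because $[a,b]$ is not a compact interval with both endpoints fixed by $h$. One must develop a half-infinite analogue: when, say, $a$ is finite and $b=\infty$, the orbit $\{h^{-n}(x)\}$ accumulates at the finite fixed point $a$ of $h$, producing a sequence of fixed points of $g$ converging to $a$, and one argues using bounded variation of $\log|h'|$ on compact subintervals of $[a,\infty)$ together with the chain rule applied to $h^{-n}g=gh^{-n}$ that $g$ must be the identity on a neighborhood of $a$, again contradicting $x\in\partial\Fix(g)$; the case $a=-\infty$, $b=\infty$ is handled by showing that a fixed-point-free $C^2$ diffeomorphism commuting with a diffeomorphism possessing a fixed point cannot coexist. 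Carrying out these half-infinite Kopell-type estimates is where the $C^2$ hypothesis is used in a more refined way than in the compact case, and it is the technical core of Farb--Franks~\cite{FF2003}; it is also the reason the conclusion here weakens from ``abelian'' to ``metabelian'' relative to Theorem~\ref{thm:ptcompact}.
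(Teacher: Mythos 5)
Your Stage 1 is fine, but Stage 2 contains a fatal error: it is not true that a nontrivial central element of a fixed-point-free nilpotent $C^2$ action on $\R$ must itself be fixed-point-free. The standard Heisenberg-type examples (which go back to Plante--Thurston and are precisely the reason Theorem~\ref{thm:ptcompact} fails on $\R$) violate this: one generator acts as the translation $t\colon x\mapsto x+1$, while the center is generated by an element $c$ supported on $\bigcup_{n\in\Z}(n,n+1)$ and commuting with $t$; this group has no global fixed point, yet \emph{every} nontrivial central element fixes all the integers. Your proposed rescue of the doubly-infinite case --- that a fixed-point-free $C^2$ diffeomorphism cannot commute with a nontrivial diffeomorphism possessing a fixed point --- is false for the same reason: $x\mapsto x+1$ commutes with $x\mapsto x+\tfrac{1}{10}\sin(2\pi x)$, which is analytic and fixes every half-integer. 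Kopell's Lemma genuinely requires the $h$-orbit to accumulate at a \emph{finite} fixed endpoint, where $C^1$-differentiability pins down the derivative; the bounded-variation sum $\sum_i\vert\log|h'(h^i(a))|-\log|h'(h^i(b))|\vert$ has no reason to converge when the orbit escapes to infinity, and there is no bi-infinite analogue. A final sanity check that Stage~2 cannot work: it would give $[N,N]\le\langle g\rangle\cong\Z$, i.e.\ a \emph{cyclic} commutator subgroup, whereas Farb--Franks~\cite{FF2003} exhibit nilpotent subgroups of $\Diff^{\infty}(\R)$ of arbitrary nilpotency class, whose commutator subgroups are free abelian of arbitrarily large rank.

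The actual argument replaces your Stage 2 by a different global step. First one proves the fixed-point statement in the form ``if \emph{every} element of $N$ has a fixed point, then $N$ is abelian'' --- a variation of the proof of Theorem~\ref{thm:ptcompact}, and the place where Kopell's Lemma and H\"older's Theorem enter. Then, for a general nilpotent $N<\Diff^2(\R)$, one produces an $N$-invariant Radon measure $\mu$ on $\R$ (available because nilpotent groups have subexponential growth), which yields a translation-number homomorphism $\tau\colon N\to\R$, $\tau(g)=\mu([x,g(x)))$. Every element of $\ker\tau$ has a fixed point, so $\ker\tau$ is abelian by the first step, and $[N,N]\le\ker\tau$ since $\R$ is abelian; hence $N$ is metabelian. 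If you want to salvage your outline, keep Stage 1 but replace the central-element/quotient-to-the-circle device by this measure-theoretic argument.
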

\begin{proof}[Sketch of proof]
First, a variation of the argument in the proof of Theorem~\ref{thm:ptcompact} can be used to prove that if every element of a nilpotent group action on $\R$ has a fixed point then the group must be abelian. Then, one takes an arbitrary nilpotent group $N$ acting on $\R$ and one finds an $N$--invariant measure on $\R$ which is finite on compact sets. This furnishes a homomorphism $N\to\R$ given by looking at the translation number. Each element of the kernel of this map has a fixed point, whence $N$ must be metabelian.
\end{proof}

In~\cite{FF2003}, Farb--Franks prove that whereas not every finitely generated nilpotent group occurs as a subgroup of $\Diff^2(\R)$, there are nilpotent subgroups of arbitrary lower central series length.

\subsection{$C^2$ actions of right-angled Artin groups}

Theorems~\ref{thm:ptcompact} and~\ref{thm:ptr} show that one cannot leverage nilpotent groups to find right-angled Artin subgroups of $\Diff^2(M)$ for any one--manifold. We will see below in Theorem~\ref{thm:raagr} that every right-angled Artin group does indeed embed into $\Diff^2(\R)$, and in fact into $\Diff^{\infty}(\R)$. This suggests that right-angled Artin groups should occur plentifully in $\Diff^2(M)$ when $M$ is compact. The opposite is true, however.

Let $P_4$ be the graph which is a path on four vertices, which is depicted in Figure~\ref{f:p4}.

\begin{figure}[h!]
  \tikzstyle {a}=[red,postaction=decorate,decoration={%
    markings,%
    mark=at position .5 with {\arrow[red]{stealth};}}]
  \tikzstyle {b}=[blue,postaction=decorate,decoration={%
    markings,%
    mark=at position .43 with {\arrow[blue]{stealth};},%
    mark=at position .57 with {\arrow[blue]{stealth};}}]
  \tikzstyle {v}=[draw,shape=circle,fill=black,inner sep=0pt]
  \tikzstyle {bv}=[black,draw,shape=circle,fill=black,inner sep=1pt]
  \tikzstyle{every edge}=[-,draw]
\begin{tikzpicture}[thick]
\draw (-1,0) node [bv] {} node [above=.1] {\small $a$} 
-- (0,0)  node [bv] {} node [above=.1] {\small $b$} 
-- (1,0)  node [bv] {} node [above=.1] {\small $c$} 
-- (2,0)  node [bv] {} node [above=.1] {\small $d$};
\end{tikzpicture}%
\caption{The graph $P_4$.}
\label{f:p4}
\end{figure}
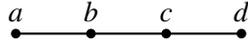

The corresponding right--angled Artin group has the presentation \[A(P_4)\cong\langle a,b,c,d\mid [a,b]=[b,c]=[c,d]=1\rangle.\]

The group $A(P_4)$ is a specific right-angled Artin group, but it is privileged in the sense that it is pervasive among right-angled Artin groups which are sufficiently complicated. We say that a finite simplicial graph $\gam$ is a \emph{cograph} if $P_4\nsubset\gam$. It turns out that right-angled Artin groups on cographs are exactly the ones which do not contain a copy of $A(P_4)$. Moreover, these groups admit a straightforward characterization.

\begin{prop}[See~\cite{KK2013}, for instance]\label{prop:cograph}
Let $\gam$ be a finite simplicial graph. Then $\gam$ is a cograph if and only if $A(P_4)$ is not a subgroup of $A(\gam)$. Moreover, the class $\mathcal{K}$ of right-angled Artin groups on cographs is characterized by the following:
\begin{enumerate}
\item
The group $\Z\in\mathcal{K}$;
\item
The class $\mathcal{K}$ is closed under taking finite direct products;
\item
The class $\mathcal{K}$ is closed under taking finite free products.
\end{enumerate}
\end{prop}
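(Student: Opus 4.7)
The plan is to dispatch the easy direction of the biconditional in one line, induct on $|V(\gam)|$ for the hard direction, and then obtain the recursive characterization of $\mathcal{K}$ from the structure theorem for cographs. The easy direction is that $P_4\subset\gam$ implies $A(P_4)\le A(\gam)$; this is immediate since any full subgraph $\Lambda\subset\gam$ yields an injection $A(\Lambda)\hookrightarrow A(\gam)$.

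For the hard direction I would induct on $|V(\gam)|$ to show that no cograph $\gam$ admits $A(P_4)$ as a subgroup. The base case $|V(\gam)|=1$ is trivial as $A(\gam)\cong\Z$. For the inductive step I would use the structural fact that every cograph on at least two vertices is either a disjoint union $\gam=\gam_1\coprod\gam_2$ or a nontrivial join $\gam=\gam_1*\gam_2$ of strictly smaller cographs. In the disjoint union case $A(\gam)=A(\gam_1)*A(\gam_2)$, since $P_4$ is connected with at least one edge, $A(P_4)$ is freely indecomposable and not infinite cyclic, so the Kurosh subgroup theorem forces any embedding $A(P_4)\hookrightarrow A(\gam_1)*A(\gam_2)$ to land (up to conjugation) inside one factor, contradicting the inductive hypothesis.

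The join case $A(\gam)=A(\gam_1)\times A(\gam_2)$ is where I expect the main obstacle. I would analyze the kernels $K_i$ of the coordinate projections $\pi_i\colon A(P_4)\to A(\gam_i)$, which are normal subgroups of $A(P_4)$ with $K_1\cap K_2=\{1\}$ and $[K_1,K_2]=1$ (because $K_1$ lies in the $A(\gam_2)$-factor and $K_2$ in the $A(\gam_1)$-factor of the product). If one $K_i$ is trivial, the projection $\pi_i$ is injective and $A(P_4)\le A(\gam_i)$, so induction concludes. The difficulty is ruling out the case where both kernels are nontrivial; here I would invoke a rigidity property of $A(P_4)$, namely that the centralizer of the normal closure of any nontrivial element is trivial. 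This can be established from the Servatius-type description of centralizers in right-angled Artin groups together with the fact that $P_4$ is connected and has no dominating vertex. Granting this, picking any $x\in K_1\setminus\{1\}$, every element of $K_2$ commutes with every conjugate of $x$, forcing $K_2=\{1\}$ and producing the desired contradiction.

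For the characterization of $\mathcal{K}$, the three closure properties follow from the identities $A(\gam_1\coprod\gam_2)=A(\gam_1)*A(\gam_2)$ and $A(\gam_1*\gam_2)=A(\gam_1)\times A(\gam_2)$, together with the elementary observation that the class of cographs is closed under finite disjoint unions and joins (an induced $P_4$ in either construction must lie entirely in one of the two pieces, by inspection of the resulting adjacency pattern). Conversely, any class with these three properties contains every cograph right-angled Artin group, by a direct induction on $|V(\gam)|$ using the recursive decomposition of cographs.
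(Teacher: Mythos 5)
The paper states this proposition without proof, deferring to~\cite{KK2013}, so there is no in-text argument to compare against; your outline follows what is essentially the standard route from that source: the decomposition theorem for cographs (disjoint union or join of smaller cographs), the Kurosh subgroup theorem for the free-product case, and a kernel analysis for the direct-product case. Most of the proposal is sound: the easy direction via full subgraphs, the free-product case via one-endedness (hence free indecomposability) of $A(P_4)$, the reduction of the join case to the claim that the centralizer in $A(P_4)$ of any nontrivial normal subgroup is trivial, and the derivation of the recursive characterization of $\mathcal{K}$ from the cograph structure theorem are all correct.

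The one genuine soft spot is your justification of the key rigidity lemma. You propose to deduce ``the centralizer of the normal closure of any nontrivial element is trivial'' from the Servatius centralizer theorem together with the facts that $P_4$ is connected and has no dominating vertex. Those hypotheses are not sufficient: the square $C_4$ is also connected with no dominating vertex, yet $A(C_4)\cong F_2\times F_2$ fails the conclusion badly --- the normal closure of a generator of one free factor is centralized by the entire other factor. What you actually need is that $P_4$ is not a join, i.e.\ that $P_4^c$ is connected (which holds, since $P_4^c\cong P_4$). Concretely, Servatius gives you that $C(g)$ is supported on $\supp g\cup\lk(\supp g)$, which is a proper subgraph whenever there is no dominating vertex; but you must then also rule out a nontrivial normal subgroup of $A(P_4)$ being contained in such a parabolic-type subgroup, and it is exactly this step that fails for $C_4$ and genuinely uses the non-join hypothesis. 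The cleanest repair is to isolate and prove (or quote) the statement that for $\Gamma$ connected, with at least two vertices, and not a join, every nontrivial normal subgroup of $A(\Gamma)$ has trivial centralizer --- this follows, for instance, from acylindrical hyperbolicity of such $A(\Gamma)$ together with torsion-freeness, or from the cancellation arguments of~\cite{KK2013}. With that lemma correctly stated, the rest of your argument goes through.
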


When $M$ is compact, the right-angled Artin subgroups of $\Diff^2(M)$ are very limited:

\begin{thm}[See~\cite{BKK16}]\label{thm:p4}
Let $M$ be a compact one--manifold. Then the group $A(P_4)$ is not a subgroup of $\Diff^2(M)$. In particular, if $A(\gam)<\Diff^2(M)$ then $\gam$ is a cograph.
\end{thm}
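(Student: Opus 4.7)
The plan is to argue by contradiction: assume there is a faithful embedding $\rho\colon A(P_4)\hookrightarrow\Diff^2(M)$, and extract a violation of Kopell's Lemma (Theorem~\ref{thm:kopell}) from the chain of commuting pairs $a-b-c-d$ in $P_4$. First I reduce to the case $M=I$: when $M=S^1$, the rotation number provides a homomorphism $A(P_4)\to S^1$ whose image is cyclic (since $A(P_4)$ is finitely generated), and passing to the kernel together with a standard argument locating a global fixed point for a finite-index subgroup yields an embedding of a copy of $A(P_4)$ into $\Diff^2(I)$.

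The central geometric observation is that, since $[a,b]=[c,b]=1$, both $a$ and $c$ preserve $\Fix(b)$ and hence permute the connected components of the open set $\supp(b)$. I would like to localize to a component $J=(p,q)$ of $\supp(b)$ that is setwise fixed by each of $a,b,c$. Granting this, on $\bar J$ the element $b$ is fixed-point-free on the interior, and Kopell's Lemma applied to the commuting pairs $(a|_{\bar J},b|_{\bar J})$ and $(c|_{\bar J},b|_{\bar J})$ forces each of $a|_{\bar J}$ and $c|_{\bar J}$ to equal the identity on $\bar J$ or else to be fixed-point-free on $J$. In the latter case, H\"older's Theorem (Theorem~\ref{thm:holder}) implies that $\langle a|_J,c|_J\rangle$ is abelian, so on $\bar J$ the commutator $[a,c]$ acts trivially.

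To globalize the contradiction I bring $d$ into the picture. Since $[c,d]=1$, the element $d$ preserves $\Fix(c)$, and a parallel argument on a component of $\supp(c)$ setwise fixed by $b,c,d$ forces $[b,d]$ to act trivially there. Patching these local identities together using the fact that the ``good'' $\langle a,b,c\rangle$- and $\langle b,c,d\rangle$-fixed components are dense in the appropriate supports (together with continuity on the common fixed set) produces the desired contradiction: at least one of $[a,c]$ or $[b,d]$ must then act trivially on all of $I$, contradicting the faithfulness of $\rho$, since both are nontrivial in $A(P_4)$.

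The main obstacle, in my view, is producing the setwise-fixed component $J$: the permutation actions of $\langle a,c\rangle$ on $\pi_0(\supp b)$ and of $\langle b,d\rangle$ on $\pi_0(\supp c)$ may have no finite orbits, and one must pass to a minimal invariant subinterval (or a limit component) using compactness of $I$ together with the $C^2$ bounded-distortion estimates built into the proof of Kopell's Lemma. The other delicate point is ensuring that the patched local identities $[a,c]|_{\bar J}=1$ and $[b,d]|_{J'}=1$ really cover all of $I$ modulo the global fixed set of the action. These localization and patching arguments, rather than the applications of Kopell's Lemma itself, are where I would expect the bulk of the technical work to lie.
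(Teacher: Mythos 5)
There is a genuine gap, and it sits exactly where you placed the ``patching'' step --- but it is not a technical difficulty to be overcome, it is a structural failure of the strategy. Your local conclusions are of the form ``$[a,c]$ acts trivially on certain components of $\supp(b)$'' and ``$[b,d]$ acts trivially on certain components of $\supp(c)$.'' Faithfulness is not contradicted by this: $\supp([a,c])$ is contained in $\supp(a)\cup\supp(c)$, which need not meet $\supp(b)$ at all, and likewise for $[b,d]$ and $\supp(c)$. Indeed the subgroup $\langle a,b,c\rangle\cong F_2\times\Z$ embeds in $\Diff^\infty(I)$ (take $\langle a,c\rangle$ free with support disjoint from $\supp(b)$), so no argument confined to the triple $(a,b,c)$ --- or, symmetrically, to $(b,c,d)$ --- can produce a contradiction; and your proposed way of combining the two triples never forces $\supp([a,c])\cup\supp([b,d])$ into the regions where your local identities hold. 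Moreover, faithfulness requires infinitely many elements (not just two distinguished commutators) to survive, and the paper's Figure~\ref{f:coint} makes the point that all the defining relations of $A(P_4)$ \emph{can} be satisfied by chain-of-intervals configurations, so there is no ``soft'' local obstruction of the kind you are after. A useful sanity check: your argument applies Kopell's Lemma only on closures of bounded support components and uses compactness of $M$ nowhere essential, so if it worked it would equally rule out $A(P_4)<\Diff^\infty(\R)$, contradicting Theorem~\ref{thm:raagr}.

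The actual proof (from~\cite{BKK16}) is quantitatively different in structure: one shows that faithfulness forces the existence of \emph{infinitely many} distinct configurations $\{I_a,I_b,I_c,I_d\}_n$ of support components realizing the intersection pattern of Figure~\ref{f:coint}, with a point $x\in\partial I_c$ satisfying $\phi(d)\phi(a)(x)\in I_b$; this step uses both Kopell's Lemma and faithfulness and is where the difficulty lies. Compactness of $M$ then forces these configurations to shrink, and the Mean Value Theorem applied to the shrinking configurations contradicts $C^1$ regularity of $\phi(a)$ or $\phi(d)$. Two smaller points: your reduction from $S^1$ to $I$ via ``rotation number is a homomorphism'' is incorrect (rotation number is only a quasimorphism on $\Homeo^+(S^1)$; the circle case needs a Denjoy/centralizer argument), and your invocation of H\"older's Theorem should be routed through the observation that \emph{every} nontrivial element of the image of $\langle a,c\rangle$ on $J$ is fixed-point-free by Kopell, not merely the generators.
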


The proof of Theorem~\ref{thm:p4} is rather involved and we will not give it here, though we will sketch some of the main ideas. The idea is to translate the combinatorial structure of the group $A(P_4)$ into a dynamical setup of subintervals and diffeomorphisms acting on $M$, all the time making liberal use of Kopell's Lemma and the Mean Value Theorem.

The first thing one does to prove Theorem~\ref{thm:p4} is to use a putative faithful $C^2$ action $\phi\colon A(P_4)\to \Diff^2(M)$ to force the supports of the generators \[\{\phi(a),\phi(b),\phi(c),\phi(d)\}\] to exhibit certain desirable properties. Namely, consider the chains of intervals as in Figure~\ref{f:coint}, where the intervals $I_v$ are components of $\supp\phi(v)$. Observe that restricting to such a configuration, the commutation relations defining $A(P_4)$ are satisfied by the corresponding diffeomorphisms.

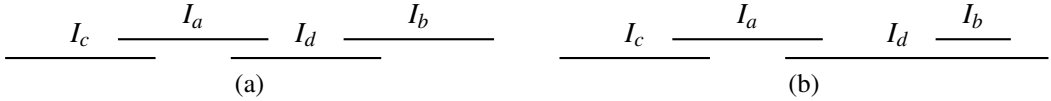
\begin{figure}[h!]
  \tikzstyle {a}=[black,postaction=decorate,decoration={%
    markings,%
    mark=at position 1 with {\arrow[black]{stealth};}    }]
  \tikzstyle {bv}=[black,draw,shape=circle,fill=black,inner sep=1.5pt]
\subfloat[(a)]{\begin{tikzpicture}[thick,scale=.5]
\draw (-5,0) -- (-1,0);
\draw (-3,0) node [above] {\small $I_c$}; 

\draw (-2,.5) -- (2,.5);
\draw (0,.5) node [above] {\small $I_a$}; 

\draw (1,0) -- (5,0);
\draw (3,0) node [above] {\small $I_d$}; 

\draw (4,.5) -- (8,.5);
\draw (6,.5) node [above] {\small $I_b$}; 
\end{tikzpicture}}
\quad\quad
\subfloat[(b)]{\begin{tikzpicture}[thick,scale=.5]
\draw (-5,0) -- (-1,0);
\draw (-3,0) node [above] {\small $I_c$}; 

\draw (-2,.5) -- (2,.5);
\draw (0,.5) node [above] {\small $I_a$}; 

\draw (1,0) -- (8,0);
\draw (4,0) node [above] {\small $I_d$}; 

\draw (5,.5) -- (7,.5);
\draw (6,.5) node [above] {\small $I_b$}; 
\end{tikzpicture}}
\caption{Chains of intervals.}
\label{f:coint}
\end{figure}

The most difficult part of the proof of Theorem~\ref{thm:p4} is to show that if the action of $A(P_4)$ on $M$ is faithful, then there must exist infinitely many configurations of intervals $\{\{I_a,I_b,I_c,I_d\}_n\}_{n\geq 1}$ in the supports of the generators $\{\phi(a),\phi(b),\phi(c),\phi(d)\}$ which have the same intersection pattern (up to orientation) as the intervals in Figure~\ref{f:coint}. Moreover, if $x$ denotes the right endpoint of $I_c$, for infinitely many of these configurations, we must have $\phi(d)\circ\phi(a)(x)\in I_b$. This part of the proof requires both Kopell's Lemma and the faithfulness of the action of $A(P_4)$.

Since $M$ is compact, it follows that these configurations $\{\{I_a,I_b,I_c,I_d\}_n\}_{n\geq 1}$ must get arbitrarily small as $n$ tends to infinity. One then uses the Mean Value Theorem to deduce that at least one of $\phi(a)$ or $\phi(d)$ fails to be $C^1$, which establishes the result. Note that this does not contradict Corollary~\ref{cor:raagc1}, since Kopell's Lemma was used in an essential way.

Theorem~\ref{thm:p4} makes significant progress towards but does not quite complete the classification of right-angled Artin subgroups of $\Diff^2(M)$. The following question remains open:

\begin{que}\label{que:cograph}
Let $\gam$ be a cograph and let $M$ be compact. Do we have $A(\gam)<\Diff^2(M)$?
\end{que}

The simplest case of Question~\ref{que:cograph} which is unknown is the case where $\gam$ is a path on three vertices together with an isolated vertex, i.e. $A(\gam)\cong (F_2\times\Z)*\Z$. A more general question about the algebraic structure of diffeomorphism groups which is unknown is the following, a positive answer to which would give a positive answer to Question~\ref{que:cograph}:

\begin{que}\label{que:freeprod}
Let $\mathcal{G}$ denote the class of finitely generated subgroups of $\Diff^2(M)$, where $M$ is a compact one--manifold. Is $\mathcal{G}$ closed under finite free products?
\end{que}

It seems to the author that the case of $(F_2\times\Z)*\Z$ contains all the salient features of the general case of Question~\ref{que:freeprod}. On the one hand, if the answer to Question~\ref{que:cograph} for this group is no then we would obtain a new proof of Theorem~\ref{thm:p4} since $(F_2\times\Z)*\Z<A(P_4)$. On the other hand, if the answer to Question~\ref{que:cograph} is yes then we would obtain a dynamical characterization of cographs.

\begin{rem}
At the time of the original writing, Questions~\ref{que:cograph} and~\ref{que:freeprod} were open. They have both been very recently resolved in the negative by Kim and the author~\cite{KKFreeProd2017}.
\end{rem}

\section{Non--compactness and $C^{\infty}$ diffeomorphisms}

If one dispenses with the assumption that the manifold $M$ on which the right-angled Artin group acts is compact, the algebraic restrictions on right-angled Artin groups with highly regular actions on one--manifolds disappear.

\subsection{$C^{\infty}$ actions}
Theorem~\ref{thm:raaghomeo} contains all the necessary ideas to prove the following result, which is the primary result in the paper~\cite{BKK2014} of Baik, Kim, and the author:

\begin{thm}\label{thm:raagr}
Let $\gam$ be an arbitrary right-angled Artin group. Then there exists an injective $A(\gam)\to\Diff^{\infty}(\R)$.
\end{thm}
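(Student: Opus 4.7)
The plan is to promote the construction in the proof of Theorem~\ref{thm:raaghomeo} from continuous to smooth, replacing each interval-supported homeomorphism by a diffeomorphism which is tangent to the identity to all orders at the endpoints of its support. As in that theorem, it suffices to show that $A(\gam)$ is residually $\Diff^{\infty}(I)$: for each $1 \neq w \in A(\gam)$ we seek a homomorphism $\phi_w \colon A(\gam) \to \Diff^{\infty}(I)$, taking values in diffeomorphisms tangent to the identity to all orders at $\{0,1\}$, with $\phi_w(w) \neq 1$. Writing $\R$ as a disjoint union of unit intervals indexed by the countably many nontrivial elements of $A(\gam)$ and extending each $\phi_w$ by the identity on the complement, these actions assemble into an injective smooth action $A(\gam) \hookrightarrow \Diff^{\infty}(\R)$.

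To construct $\phi_w$ I would follow the scheme in Theorem~\ref{thm:raaghomeo} verbatim: express $w$ in left greedy normal form $w = w_k \cdots w_1$ via Proposition~\ref{prop:leftgreedyexist}, pick vertices $v_i \in \supp w_i$ with $[v_i, v_{i+1}] \neq 1$ via Proposition~\ref{prop:leftgreedy}, and choose the chain of overlapping subintervals $J_1, \ldots, J_k \subset I$ with the prescribed intersection pattern. The only change is that the homeomorphisms $\phi_i$ are replaced by smooth diffeomorphisms $\psi_i$ of $J_i$, each tangent to the identity to all orders at $\partial J_i$ (so they extend smoothly by the identity to $I$), chosen so that the orbit $x_{i+1} = \psi_i(x_i)$ of a starting point $x_1 \in J_1 \setminus J_2$ satisfies $x_{i+1} \in J_{i+1}$ and $x_{k+1} \notin J_{k-1}$. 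Such $\psi_i$ are produced by standard smooth bump constructions; infinite tangency at $\partial J_i$ imposes no obstruction on whether $\psi_i$ can carry $x_i$ to any prescribed interior target. Each generator $v \in V(\gam)$ is then declared to act as $\prod_{v_i = v} \psi_i^{\epsilon_i}$, with all other generators acting trivially.

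To check that this yields a well-defined smooth action, observe first that if $[v,v'] = 1$ in $A(\gam)$ with $v = v_i$ and $v' = v_j$ for some $i,j$, then $|i-j| \geq 2$; otherwise $[v_i, v_{i+1}] = [v,v'] = 1$ would contradict the choice of the $v_i$. Consequently the intervals $J_i$ and $J_j$ are disjoint, so the actions of $v$ and $v'$ have disjoint supports, and the defining relations of $A(\gam)$ are automatically satisfied. The product defining the action of each $v$ is a finite composition of smooth diffeomorphisms with pairwise disjoint supports, each infinitely tangent to the identity along $\partial J_i$, so the result is an element of $\Diff^{\infty}(I)$ which is infinitely tangent to the identity at $\partial I$. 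As in Theorem~\ref{thm:raaghomeo}, $w$ then sends $x_1$ to $x_{k+1} \neq x_1$, so $\phi_w(w) \neq 1$.

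The only real work is the bookkeeping in the second paragraph: confirming that smooth bump functions can realize the prescribed orbit data $x_1 \mapsto x_2 \mapsto \cdots \mapsto x_{k+1}$ while simultaneously achieving infinite tangency to the identity at both endpoints of each $J_i$. Once this is verified, the group-theoretic analysis is identical to the $C^0$ case treated in Theorem~\ref{thm:raaghomeo}, and the residual-smoothness construction assembles into the desired embedding $A(\gam) \hookrightarrow \Diff^{\infty}(\R)$.
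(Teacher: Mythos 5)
Your proposal is correct and follows essentially the same route as the paper's (very terse) sketch: mimic the $C^0$ construction of Theorem~\ref{thm:raaghomeo} with the $\phi_i$ replaced by smooth diffeomorphisms infinitely tangent to the identity at the endpoints of their supporting intervals, and assemble the resulting residual actions over a locally finite family of intervals in $\R$. The well-definedness check via $|i-j|\geq 2$ and the non-triviality via $x_{k+1}\neq x_1$ match the paper's argument.
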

\begin{proof}[Sketch of proof]
To establish the result, we can mimic the proof of Theorem~\ref{thm:raaghomeo} above. It suffices to exercise sufficient care to ensure that all the choices of homeomorphisms are $C^{\infty}$. If $1\neq w\in A(\gam)$ has length $n$, it is fairly easy to use the construction of Theorem~\ref{thm:raaghomeo} to produce an action of $A(\gam)$ on the interval $[0,n]$ so that the derivatives of the generators are universally bounded. This way, one obtains an embedding $A(\gam)\to\Diff^{\infty}(\R)$.
\end{proof}

\subsection{Analytic actions}

The highest degree of regularity one can consider on a compact one--manifold is analyticity, and we write $\Diff^{\omega}(M)$ for the group of analytic diffeomorphisms of $M$. Recall that if $t$ is a local parameter of $M$, an analytic diffeomorphism of $M$ is given by a locally convergent power series in $t$.

The principle of analytic continuation severely restricts commutation of diffeomorphisms in $\Diff^{\omega}(M)$, and hence the diversity of right-angled Artin subgroups of $\Diff^{\omega}(M)$. The following is a standard result about analytic functions, which applied more generally than as we state here:

\begin{lem}\label{lem:analytic}
Let $M$ be a one--manifold and let $f$ and $g$ be analytic functions on $M$. Let $\{x_i\}_{i\geq 1}\subset M$ be a sequence of points with accumulation point $x\in M$, such that $f(x_i)=g(x_i)$ for each $i$. Then $f=g$ on $M$.
\end{lem}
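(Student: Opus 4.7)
Set $h=f-g$, which is analytic on $M$, and note that it suffices to show $h\equiv 0$. By continuity, $h(x)=0$. The plan is to first show that $h$ vanishes identically on some neighborhood of $x$, and then to propagate this vanishing across all of $M$ using connectedness.

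For the local step, choose an analytic chart centered at $x$, so that in a neighborhood of $x$ we may write
\[
h(t)=\sum_{k\geq 0}a_k t^k,
\]
with positive radius of convergence. Suppose toward a contradiction that some $a_k\neq 0$, and let $k_0$ be the smallest such index. Then near $0$ we can factor
\[
h(t)=t^{k_0}\bigl(a_{k_0}+a_{k_0+1}t+\cdots\bigr),
\]
where the second factor is continuous and nonzero at $0$. Hence $h$ is nonzero on some punctured neighborhood of $x$. But the hypothesis supplies points $x_i\to x$ with $x_i\neq x$ (for $i$ large, since $x$ is an accumulation point) at which $h(x_i)=0$, a contradiction. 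Therefore every $a_k=0$ and $h$ vanishes on an open neighborhood of $x$.

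For the global step, let
\[
A=\{y\in M\mid h\equiv 0 \text{ on some open neighborhood of }y\}.
\]
By construction $A$ is open, and by the previous paragraph $x\in A$, so $A$ is nonempty. I claim $A$ is also closed. Indeed, if $y_n\in A$ with $y_n\to y$, then working in an analytic chart around $y$, every derivative $h^{(k)}$ vanishes at each $y_n$, hence at $y$ by continuity; the local argument above (applied at $y$ instead of at $x$, with the role of the sequence played by $\{y_n\}$) then shows $h\equiv 0$ near $y$, so $y\in A$. Since $M$ is connected, $A=M$, and in particular $h\equiv 0$ on $M$.

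The only conceptual step is the local vanishing at the accumulation point; the propagation is then a standard clopen argument. No step here should be difficult, as this is the classical identity theorem for analytic functions in one real variable; the only mild care needed is to work in an analytic chart so that power series expansions are available on a genuine one-manifold rather than on an open subset of $\R$.
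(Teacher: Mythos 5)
Your proof is correct and follows essentially the same route as the paper: both reduce to showing that all Taylor coefficients of $f-g$ vanish at the accumulation point and then invoke the identity principle for real-analytic functions. You simply spell out two steps the paper compresses into one sentence — the order-of-vanishing argument showing the local Taylor series is zero, and the clopen/connectedness propagation from a neighborhood of $x$ to all of $M$ — which is a reasonable amount of added detail, not a different method.
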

\begin{proof}
By the continuity of all derivatives, we have that the value and all derivatives of $f-g$ at $x$ are $0$, whence the power series expansion of $f-g$ is identically $0$.
\end{proof}

Applying Lemma \ref{lem:analytic} to the identity function, we obtain the following:

\begin{cor}\label{cor:analfix}
Let $M$ be a compact one--manifold and let $f\in\Diff^{\omega}(M)$. Then $f$ has at most finitely many fixed points.
\end{cor}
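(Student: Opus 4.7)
The plan is to argue by contradiction, leveraging the compactness of $M$ to produce an accumulation point of the fixed set and then invoking Lemma \ref{lem:analytic} with $g$ taken to be the identity function. The corollary should of course be understood as holding for $f \neq \Id$, since the identity map has every point as a fixed point; I would either tacitly assume $f \neq \Id$ in the proof, or remark on this case explicitly at the outset.

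First I would suppose, towards a contradiction, that $\Fix f$ is infinite. Since $M$ is a compact one--manifold, any infinite subset of $M$ has an accumulation point, so we can extract a sequence $\{x_i\}_{i\geq 1}\subset\Fix f$ converging to some point $x\in M$. By the definition of $\Fix f$, we have $f(x_i)=x_i$ for each $i$, which is precisely the statement that $f$ and the identity function $\Id_M$ agree on the sequence $\{x_i\}$. Lemma \ref{lem:analytic}, applied with $g=\Id_M$, then forces $f=\Id_M$ on all of $M$. This concludes the contrapositive: if $f$ is not the identity, then $\Fix f$ is finite.

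There is essentially no hard step here; the only subtlety is making sure that Lemma \ref{lem:analytic} applies uniformly across $M$ (which is fine, as its conclusion is global). The one point I would be careful about is the case where $M=S^1$, where compactness is automatic; and the case $M=I$, where we must note that the accumulation point $x$ lies in $M$ itself (so that the hypotheses of Lemma \ref{lem:analytic} are met at a genuine point of the manifold, not merely a limit in some ambient space). Both are handled by the fact that $M$ is compact and thus closed in any reasonable sense. I would therefore expect the write--up to be a few lines, with the only real content being a remark on the identity element.
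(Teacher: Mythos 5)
Your proof is correct and matches the paper's approach exactly: the paper derives the corollary by applying Lemma~\ref{lem:analytic} to the identity function, which is precisely your argument (compactness gives an accumulation point of an infinite fixed set, forcing $f=\Id$). Your remark that the statement tacitly excludes $f=\Id$ is a fair and appropriate clarification.
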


Combining Corollary~\ref{cor:analfix} with Lemma~\ref{lem:fix}, we see the following:

\begin{cor}\label{cor:commufix}
Let $M$ be a compact one--manifold and let $f,g\in\Diff^{\omega}(M)$ be such that $[f,g]=1$. Suppose moreover that both $f$ and $g$ have infinite order. Then for some $N,M\gg 0$ we have $\Fix(f^N)=\Fix(g^M)$.
\end{cor}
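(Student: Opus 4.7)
The plan is to reduce the equality $\Fix(f^N)=\Fix(g^M)$ to finiteness of the common periodic point set. Define $P_f:=\bigcup_{N\geq 1}\Fix(f^N)$ and $P_g:=\bigcup_{M\geq 1}\Fix(g^M)$. Once one knows that $P_f$ is finite and $P_f=P_g$, taking $N$ to be a common multiple of the $f$-periods of the (finitely many) points of $P_f$ gives $\Fix(f^N)=P_f$ (the reverse inclusion is automatic), and an analogous choice of $M$ gives $\Fix(g^M)=P_g$; the desired equality then follows.

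I would first observe that Corollary~\ref{cor:analfix} combined with the infinite-order hypothesis on $f$ and $g$ forces every $\Fix(f^N)$ and $\Fix(g^M)$ to be finite. Lemma~\ref{lem:fix} applied to the commuting pair $f^N,g$ then shows that $g$ permutes the finite set $\Fix(f^N)$, so the $g$-orbit of any $x\in\Fix(f^N)$ is finite and hence $x$ is periodic under $g$. This yields $P_f\subseteq P_g$, and by symmetry $P_f=P_g$.

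The heart of the argument is the finiteness of $P_f$. After replacing $f$ and $g$ by a common power if necessary, I may assume every connected component of the manifold is preserved, reducing to the case of either a closed interval or a circle. On an interval, every orientation-preserving analytic diffeomorphism is strictly increasing, so $f(y)>y$ forces $f^N(y)>y$ for all $N\geq 1$; hence $\Fix(f^N)=\Fix(f)$ for every $N$, and $P_f=\Fix(f)$ is finite by Corollary~\ref{cor:analfix}. On a circle, I would invoke the rotation number $\rho(f)$: if $\rho(f)$ is irrational then $f$ has no periodic points; if $\rho(f)=p/q$ in lowest terms, then $f^q$ has rotation number zero and thus admits a fixed point, so an appropriate lift $\widetilde{f^q}$ to $\R$ fixing a preimage of this point is an increasing homeomorphism. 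Any periodic point $y$ of $f$ of period $m$ forces $\rho(f^m)=mp/q\in\Z$ and hence $q\mid m$, and the monotonicity of $\widetilde{f^q}$ then forces $y\in\Fix(f^q)$, giving $P_f=\Fix(f^q)$, which is finite.

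The main obstacle is the circle case, where I must rule out the appearance of genuinely new periodic points as the iteration exponent grows. This is handled by the rotation-number computation, which pins down the translation numbers of lifts, together with the monotonicity of lifts of orientation-preserving diffeomorphisms of $S^1$ to $\R$.
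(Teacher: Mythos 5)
Your proof is correct and follows essentially the same route as the paper's: finiteness of fixed sets from Corollary~\ref{cor:analfix}, Lemma~\ref{lem:fix} to show each map permutes the other's (finite) fixed set, and passage to suitable powers to handle circle components. You are in fact more careful than the paper at the one delicate point --- showing that the periodic set of $f$ is exhausted by a single power $\Fix(f^q)$, via monotonicity on intervals and rotation numbers on circles --- which the paper's sketch elides.
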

\begin{proof}
The only potential complication here is when $M$ has a component homeomorphic to $S^1$ and one of $f$ or $g$ has periodic points. Replacing $f$ and $g$ by suitable powers, we may assume that both are fixed point free (in which case the conclusion of the corollary holds trivially), or $f$ has a finite collection of fixed points and $g$ has no periodic points. Lemma~\ref{lem:fix} implies that $g(\Fix f)=\Fix f$, so that $\Fix f$ is pointwise stabilized by $g$. Switching the roles of $f$ and $g$, we obtain the desired conclusion.
\end{proof}

Thus, a nontrivial analytic homeomorphism $f$ is \emph{fully supported} in the sense that the interior of $\Fix f$ is empty, and two infinite order commuting analytic homeomorphisms of a compact one--manifold have the same fixed points, possibly after passing to a power.

Let $G$ be a group and let $S\subset G$. The \emph{commutation graph} of $S$ is the graph whose vertices are the elements of $S$ and whose edge relation is given by commutation in $G$.

The following lemma also holds for $C^2$ diffeomorphisms (see~\cite{FF2001}), but for simplicity we state it for analytic diffeomorphisms.
\begin{lem}\label{lem:commgraph}
Let $M$ be a compact one--manifold and let $S\subset\Diff^{\omega}(M)\setminus 1$ be a collection of diffeomorphisms. Suppose that the commutation graph of $S$ is connected. Suppose moreover that each $g\in S$ has at least one fixed point on each component of $M$ which is homeomorphic to $S^1$. Then $G=\langle S\rangle$ is abelian.
\end{lem}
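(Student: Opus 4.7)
I first extract structural information from the hypotheses. For every $g\in S$, Corollary~\ref{cor:analfix} gives that $\Fix g$ is finite, while the hypothesis on $S^1$-components, together with the fact that every orientation-preserving diffeomorphism of a closed interval fixes its endpoints, shows that $\Fix g$ is nonempty on every component of $M$. A nontrivial orientation-preserving analytic diffeomorphism of $I$ or of $S^1$ that has a fixed point must have infinite order: monotonicity rules out interior periodic points on $I$, and cutting $S^1$ at a fixed point reduces the $S^1$ case to the $I$ case. For any two commuting generators $f,g\in S$, Corollary~\ref{cor:commufix} then yields $\Fix(f^N)=\Fix(g^M)$ for some $N,M\geq 1$, and by the monotonicity argument above $\Fix(f^N)=\Fix f$ and $\Fix(g^M)=\Fix g$, so $\Fix f=\Fix g$. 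Propagating along the connected commutation graph, there is a finite set $F\subset M$ with $\Fix g=F$ for every $g\in S$. Since every generator fixes $F$ pointwise and is orientation preserving, each component $J$ of $M\setminus F$ is preserved by every generator, hence by $G$.

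The heart of the argument is an induction on the length $\ell$ of a commutation path $g_0,g_1,\ldots,g_\ell$ in $S$ showing that $[g_0,g_\ell]=1$. The base case $\ell=1$ is the hypothesis. For $\ell\geq 2$, the induction hypothesis applied to the subpath $g_0,\ldots,g_{\ell-1}$ gives $[g_0,g_{\ell-1}]=1$, so together with $[g_{\ell-1},g_\ell]=1$ both $g_0$ and $g_\ell$ lie in the centralizer $C_G(g_{\ell-1})$. Fix any component $J$ of $M\setminus F$; by cutting $M$ along $F$ one identifies $\bar J$ with a closed interval, on whose interior $g_{\ell-1}$ is fixed-point-free. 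Kopell's Lemma (Theorem~\ref{thm:kopell}), applied to the pair $(g_{\ell-1},k)$ for any $k\in C_G(g_{\ell-1})$, forces $k|_{\bar J}$ to be either trivial or fixed-point-free on the interior of $\bar J$. In particular the finitely generated subgroup $\langle g_0,g_\ell\rangle|_J\leq\Homeo^+(J)$ acts freely on $J\cong\R$, so by H\"older's Theorem (Theorem~\ref{thm:holder}) it is abelian, and $[g_0,g_\ell]|_J=1$. Since $[g_0,g_\ell]\in\Diff^\omega(M)$ agrees with the identity on the infinite set $J$, Lemma~\ref{lem:analytic} yields $[g_0,g_\ell]=1$ on all of $M$.

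Applying this inductive claim to every pair of generators shows that all of $S$ pairwise commutes, so $G=\langle S\rangle$ is abelian. The principal obstacle is this inductive step: propagating commutation along a path rather than a single edge requires producing a genuinely free action on an interval, and this is where Kopell's Lemma (using the $C^2$-regularity inherent in $\Diff^\omega$ to rule out interior fixed points in centralizers) and H\"older's Theorem (converting free action into abelianness) work in tandem, with Lemma~\ref{lem:analytic} finally upgrading the conclusion from a single component to all of $M$.
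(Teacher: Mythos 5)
Your proof is correct and follows essentially the same route as the paper's: use Corollary~\ref{cor:commufix} and connectedness of the commutation graph to show all generators share a common fixed set, then on each complementary interval apply Kopell's Lemma to force centralizers to act freely and H\"older's Theorem to make them abelian, concluding by induction on distance in the commutation graph. Your write-up is simply a more detailed version of the paper's argument, with the reduction to components and the finite-order/periodic-point issues spelled out explicitly.
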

\begin{proof}
By Corollary~\ref{cor:commufix}, the set of fixed points of any element of $S$ are global fixed points of $G$, so we may consider the case where $G$ acts without global fixed points on a single interval $I$. Let $g\in S$ and suppose that there is an element $h\in Z_G(g)$ with a fixed point in the interior of $I$. Then the fact that $h$ is $C^2$ implies that $h$ is the identity, by Kopell's Lemma. Thus, $Z_G(g)$ acts freely on $I$ and is hence abelian by H\"older's Theorem. Thus if $g_1,g_2\in S$, induction on distance in the commutation graph of $S$ implies that $g_1$ and $g_2$ commute, hence $G$ is abelian.
\end{proof}

Combining the facts we have gathered we obtain the following:

\begin{thm}\label{thm:analcompact}
Let $A(\gam)<\Diff^{\omega}(S^1)$. Then every connected component of $\gam$ is complete. Moreover, every such right-angled Artin group embeds in $\Diff^{\omega}(S^1)$.
\end{thm}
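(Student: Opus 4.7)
The theorem has two directions: ``only if'' (every component of $\gam$ must be complete) and ``if'' (a disjoint union of cliques is always realized). For the forward direction I would argue by contradiction. Suppose some connected component $\Lambda\subseteq\gam$ is not complete, and pick a shortest path in $\Lambda$ between two non--adjacent vertices to obtain a triple $v_0,v_1,v_2$ with $[v_0,v_1]=[v_1,v_2]=1$ in $A(\gam)$ but $[v_0,v_2]\neq 1$; write $f_i\in\Diff^\omega(S^1)$ for the image of $v_i$ under the given embedding. Since $[v_0,v_2]\neq 1$, Baudisch's theorem gives $\langle v_0,v_2\rangle\cong F_2$, so $\langle v_0^N,v_2^M\rangle\cong F_2$ for all nonzero $N,M$. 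The argument then splits on the rotation number of $f_1$.

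If $f_1$ has rational rotation number $p/q$, then $f_1^q$ has a fixed point, and torsion--freeness of $A(\gam)$ together with Corollary~\ref{cor:analfix} make $\Fix(f_1^q)$ nonempty and finite. By Lemma~\ref{lem:fix}, $f_0$ and $f_2$ permute $\Fix(f_1^q)$, so suitable powers $f_0^N,f_2^M$ fix it pointwise and stabilize each closed arc $\bar J$ between consecutive points of $\Fix(f_1^q)$, on which $f_1^q$ is fixed--point free in the interior. The subgroup $H=\langle f_0^N|_{\bar J},f_2^M|_{\bar J}\rangle\subset\Diff^2(\bar J)$ centralizes $f_1^q|_{\bar J}$, so Kopell's Lemma (Theorem~\ref{thm:kopell}) forces every element of $H$ either to be the identity on $\bar J$ or to be fixed--point free in the interior. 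By Lemma~\ref{lem:analytic}, being the identity on $\bar J$ propagates to identity on $S^1$; combined with the injectivity of $A(\gam)\hookrightarrow\Diff^\omega(S^1)$ and $\langle v_0^N,v_2^M\rangle\cong F_2$, only the trivial word can be identity on $\bar J$. Hence $H\cong F_2$ acts freely on the interior of $\bar J$, and H\"older's Theorem forces $H$ to be abelian, a contradiction.

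If $f_1$ has irrational rotation number, then every non--identity power of $f_1$ is fixed--point free, so $\langle f_1\rangle$ acts freely on $S^1$; the $C^2$--strengthening of Denjoy's Theorem noted after Theorem~\ref{thm:denjoy} gives a homeomorphism $\phi$ with $\phi f_1\phi^{-1}=R_\alpha$ for some irrational $\alpha$. Density of the $R_\alpha$--orbit combined with continuity shows that $Z_{\Homeo^+(S^1)}(R_\alpha)=\operatorname{SO}(2)$, so $Z_{\Homeo^+(S^1)}(f_1)=\phi^{-1}\operatorname{SO}(2)\phi$ is abelian. Since $f_0,f_2\in Z(f_1)$, they commute in $\Homeo^+(S^1)$ and hence in $\Diff^\omega(S^1)$, once again contradicting $[v_0,v_2]\neq 1$.

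For the converse direction, a graph whose components are all complete yields $A(\gam)\cong\Z^{n_1}*\cdots*\Z^{n_k}$. Each $\Z^{n_i}$ embeds into $\Diff^\omega(S^1)$ as a dense rank--$n_i$ subgroup of an elliptic stabilizer in $\PSL(2,\R)<\Diff^\omega(S^1)$ (generated by rotations through $\Q$--linearly independent angles). Conjugating the $k$ subgroups by sufficiently large powers of a hyperbolic element of $\PSL(2,\R)$ separates their rotation centers in $\bH^2$, and a ping--pong argument on $S^1=\partial\bH^2$ verifies that the subgroup so generated is the desired free product. The trickiest part of the whole argument is the rational case of the forward direction, which weaves together Kopell's Lemma ($C^2$ rigidity), analytic continuation (global rigidity), Baudisch's theorem (promoting non--commutation into freeness), and H\"older's Theorem applied to a free $F_2$--action, with the punchline that $F_2$ is not abelian. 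The ping--pong for the converse is also somewhat delicate because elliptic elements of $\PSL(2,\R)$ have no fixed points on $S^1$, so one must use attracting arcs for hyperbolic conjugates rather than classical Schottky disks.
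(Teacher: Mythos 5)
Your forward direction is correct and is essentially the paper's argument, localized: the paper packages the same combination of Corollary~\ref{cor:commufix}, Kopell's Lemma, analytic continuation and H\"older's Theorem into Lemma~\ref{lem:commgraph} (an induction over the commutation graph of $V(\gam)$), whereas you run the base case on an induced path $v_0\sim v_1\sim v_2$ with $v_0\not\sim v_2$, which suffices since a connected non-complete graph contains such a path. Your explicit treatment of the irrational rotation number case via Denjoy and $Z_{\Homeo^+(S^1)}(R_\alpha)=\mathrm{SO}(2)$ is a point the paper's sketch glosses over (it simply says one may pass to powers so that the hypotheses of Lemma~\ref{lem:commgraph} hold), and it is a worthwhile addition.

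The converse direction, however, has a genuine gap. A single ping--pong argument cannot certify that your dense elliptic subgroups generate $\Z^{n_1}*\cdots*\Z^{n_k}$: a rank-$n$ dense subgroup of $\mathrm{SO}(2)$ necessarily contains rotations by angles arbitrarily close to $0$, i.e.\ elements arbitrarily close to the identity in $\PSL_2(\R)$, uniformly on $S^1$. Ping--pong for a free product requires disjoint sets $X_1,\dots,X_k$ with $g(X_j)\subset X_i$ for \emph{every} nontrivial $g$ in the $i$-th factor and every $j\neq i$; an element uniformly close to the identity cannot move a set with nonempty interior off itself, and no amount of conjugating the factors apart by a hyperbolic element fixes this, since the obstruction lives inside a single factor. (Taking the $X_i$ to be points fails too, as elliptic elements fix no point of $\partial\bH^2$.) This is precisely why the paper does not attempt a global ping--pong: it first reduces to embedding $\Z*\Z^n$ via the Kurosh Subgroup Theorem, then argues on the $\PSL_2(\R)$ representation variety of $\Z*\Z^n$ that for each fixed nontrivial word $w$ the representations killing $w$ form a proper subvariety (here a ping--pong adapted to the finitely many group elements appearing in $w$ does work, with constants depending on $w$), and concludes by a Baire category argument that a generic representation is faithful. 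You should replace your uniform ping--pong with such a genericity argument, or at minimum make the ping--pong constants word-dependent and then pass to a countable intersection of dense open sets.
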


Theorem~\ref{thm:analcompact} was originally proved by A. Akhmedov and M. Cohen~\cite{AkhmedovCohen2015}, and we sketch a proof here:

\iffalse
\begin{thm}\label{thm:analr}
Let $A(\gam)<\Diff^{\omega}(\R)$. Then every connected component of $\gam$ is complete. Moreover, every such right-angled Artin group embeds in $\Diff^{\omega}(\R)$.
\end{thm}
\fi

\begin{proof}[Sketch of proof of Theorem~\ref{thm:analcompact}]
First, suppose that $A(\gam)<\Diff^{\omega}(S^1)$. Notice that $\gam$ is the commutation graph of $V(\gam)$, viewed as a subset of $A(\gam)$. Replacing generators of $A(\gam)$ by suitable powers, we may assume that the hypotheses of Lemma~\ref{lem:commgraph} are satisfied by elements of $V(\gam)$. It follows that each component of $\gam$ is complete.

Now, let $\gam$ be a graph, each component of which is complete. If $n$ is the size of the largest component of $\gam$, it suffices to show $\Z*\Z^n<\Diff^{\omega}(S^1)$, by a standard application of the Kurosh Subgroup Theorem. One can show that $\Z*\Z^n$ can be realized as a subgroup of $\PSL_2(\R)<\Diff^{\omega}(S^1)$. To see this, consider two hyperbolic axes $\gamma$ and $\delta$ which share no endpoints at infinity. For each nontrivial $w\in\Z*\Z^n$, the points on the $\PSL_2(\R)$ representation variety of $\Z*\Z^n$ for which $w$ is not in the kernel form a proper subvariety, by an easy Ping--Pong Lemma argument (see~\cite{MR1786869,KoberdaIMS}). A standard Baire Category argument proves that $\Z*\Z^n<\PSL_2(\R)$.
\end{proof}

\section{Ubiquity, mapping class groups, and applications}\label{sec:mcg}

In this section, we discuss classes of groups  which contain many right-angled Artin groups.

\subsection{Mapping class groups}\label{subsec:mcg}

Let $S$ be an orientable surface with finite genus $g$ and a finite number $n$ of punctures, marked points, and boundary components. Recall that the mapping class group $\Mod(S)$ is defined to be the group of homotopy classes of orientation preserving homeomorphisms of $S$.

An important tool for studying mapping class groups if the \emph{curve graph} of $S$, denoted $\CC(S)$. This is the graph whose vertices are homotopy classes of essential embedded loops in $S$ which are not parallel to a puncture, marked point, or boundary component, and whose edge relation is given by disjointness (i.e. two loops are adjacent if they have representatives in their homotopy classes which are disjoint on $S$). A good measure of the complexity of $\CC(S)$ is the the number \[c(S):=3g-3+n,\] which coincides with the size of the largest complete subgraph of $\CC(S)$. In the literature on mapping class groups, authors often consider the \emph{curve complex}, which is just the flag complex of the curve graph. Note that the mapping class group acts on $\CC(S)$, more or less by definition, and the quotient is finite.

The curve graph provides a good bridge between mapping class groups and right-angled Artin groups, in light of the following result.

\begin{thm}[\cite{Koberda2012}, cf. Theorem~\ref{thm:gex}]\label{thm:raagmcg}
Let $\gam$ be a finite simplicial graph and let $\i\colon \gam\to\CC(S)$ be an injective map of graphs which is adjacency (and non--adjacency) preserving. Then for all $N\gg 0$ the map \[i_N\colon A(\gam)\to\Mod(S)\] given by $v\mapsto T_{i(v)}^N$ is injective, where here $T_{i(v)}$ denotes the Dehn twist about the curve $i(v)$.
\end{thm}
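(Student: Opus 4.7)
The plan is to first verify that $i_N$ is a well--defined homomorphism for every $N\geq 1$, and then establish injectivity for $N$ sufficiently large by exhibiting, for each nontrivial $w\in A(\gam)$, a curve on $S$ that is moved by $i_N(w)$ in $\CC(S)$.

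\emph{Well--definedness.} The defining relations of $A(\gam)$ are the commutators $[v,w]=1$ for $\{v,w\}\in E(\gam)$. The hypothesis on $\iota$ gives $\{i(v),i(w)\}\in E(\CC(S))$, so $i(v)$ and $i(w)$ admit disjoint representatives on $S$ and the corresponding Dehn twists commute in $\Mod(S)$; their $N$--th powers therefore commute as well, and $i_N$ extends to a homomorphism for every $N$.

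\emph{Injectivity via normal form and subsurface projections.} Let $1\neq w\in A(\gam)$ be reduced. Using the left--greedy normal form developed earlier (Propositions~\ref{prop:leftgreedyexist} and~\ref{prop:leftgreedy}), write $w=w_k\cdots w_1$ as a product of central words, and select $v_j\in\supp w_j$ with $[v_j,v_{j+1}]\neq 1$ in $A(\gam)$. Because $\iota$ also preserves non--adjacency, $i(v_j)$ and $i(v_{j+1})$ have essential intersection on $S$. The main tool is the Masur--Minsky annular subsurface projection $\pi_\alpha$ to $\CC(A_\alpha)\cong\Z$, together with the Behrstock inequality, which bounds by a universal constant $B$ the change in $\pi_\alpha$ caused by operations supported away from $\alpha$. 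The twist $T_\alpha^N$ acts on $\CC(A_\alpha)$ by translation of magnitude $N$. Choose an initial curve $c$ intersecting $i(v_1)$, and track the projections $\pi_{i(v_j)}(i_N(w_j\cdots w_1)\cdot c)$ inductively in $j$. The alternation built into the normal form guarantees that at each stage the ``active'' twist at $i(v_j)$ contributes translation of magnitude $\approx N$ to its annular projection, while the Behrstock bound controls the interference coming from vertices in the earlier syllables that lie in the link of $i(v_j)$. For $N$ larger than some explicit function of $B$ and $|V(\gam)|$, these projections are nonzero and cumulate, so $i_N(w)\cdot c\neq c$ and $i_N(w)\neq 1$.

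\emph{Main obstacle.} The delicate point is the combinatorial bookkeeping of how the syllables interact with the various annular projections, in particular handling vertices in $\supp w_j$ other than $v_j$, and verifying that the Behrstock interference is truly dominated by the twist translation with a \emph{uniform} $N$ that does not depend on the word. The crucial algebraic input is the alternation property of left--greedy normal forms, which prevents $w$ from hiding cancellations across syllables and ensures the tracked projections do not collapse. A more elementary alternative, when $\gam$ is a discrete graph, is Thurston's classical ping--pong on the space of projective measured laminations, which already shows that sufficiently high powers of Dehn twists about pairwise intersecting curves generate a free group; in principle the general case can then be reduced to combinations of this ping--pong and the evident commutations by induction on $|V(\gam)|$, though the subsurface projection argument is more conceptual and yields cleaner bounds on the required $N$.
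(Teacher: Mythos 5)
Note first that the survey does not actually prove this theorem: it explicitly declines (``the proof \ldots is rather involved and we will not give it here'') and defers to~\cite{Koberda2012}, so the comparison below is with that source. Your outline is correct in its skeleton --- well--definedness is the trivial half, and injectivity is obtained by putting $w$ in a normal form with an alternation property and then showing the image moves a curve --- but the measuring device you propose is genuinely different from Koberda's. The original argument uses elementary intersection--number estimates for powers of Dehn twists (the classical inequality $\bigl|\,i(T_\alpha^n\beta,\gamma)-|n|\,i(\alpha,\beta)\,i(\alpha,\gamma)\bigr|\le i(\beta,\gamma)$) together with convergence of $T_\alpha^N\beta$ to $\alpha$ in $\mathcal{PML}(S)$, run inductively along the central--word decomposition; the compactness of $\mathcal{PML}(S)$ is what produces the uniform $N$, at the cost of effectiveness. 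What you describe --- annular subsurface projections, bounded interference from twists disjoint from the active annulus, and an ordering of the active annuli along the word --- is essentially the Clay--Leininger--Mangahas route~\cite{CLM2012}, which does work, additionally yields quasi--isometric embeddings, and gives explicit bounds on $N$. One terminological correction: the statement that operations supported away from $\alpha$ move $\pi_\alpha$ by a bounded amount is the Lipschitz/bounded--displacement property of projections, not the Behrstock inequality; the Behrstock inequality is what you need for the harder issue of non--consecutive syllables whose supports \emph{do} cut $i(v_j)$, since it is what orders the annuli and prevents later syllables from undoing earlier progress.

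Two caveats. First, your ``main obstacle'' paragraph is an accurate list of exactly the points where the real work lives (letters of $w_j$ other than $v_j$, interference from distant syllables, uniformity of $N$), and as written none of them is carried out; the proposal is a correct plan rather than a proof. Second, the claim that the general case ``can then be reduced to combinations of this ping--pong and the evident commutations by induction on $|V(\gam)|$'' overreaches: there is no evident induction of this kind, and the mixed commutation structure --- a word whose syllables are supported on multicurves that partially overlap the next syllable's support --- is precisely what makes the theorem hard. If such a reduction existed, neither the $\mathcal{PML}$ argument nor the projection machinery would be needed. Either drop that sentence or treat the subsurface--projection argument as the actual proof and carry out the bookkeeping.
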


Theorem~\ref{thm:raagmcg} holds in somewhat greater generality, though we shall not require such a statement here. The proof of Theorem~\ref{thm:raagmcg}, while relying only on relatively elementary hyperbolic geometry and combinatorial group theory, is rather involved and we will not give it here. Converses to Theorem~\ref{thm:raagmcg} in the sense of Theorem~\ref{thm:gex} have been studied in~\cite{KK2014b,KKOsaka2016} (cf.~\cite{BCG2016}).

Theorem~\ref{thm:raagmcg} is not the first result about right-angled Artin subgroups of mapping class groups. Interest in the subgroup structure of mapping class groups has led many authors to study right-angled Artin subgroups of mapping class groups, including Crisp--Farb~\cite{CFPreprint}, Crisp--Paris~\cite{CP2001}, Crisp--Wiest~\cite{CW2007}, Clay--Leininger--Mangahas~\cite{CLM2012}, and Kuno~\cite{Kuno2017}.

We note the following corollaries, which are fairly easy consequences of Theorem~\ref{thm:raagmcg}:

\begin{cor}\label{cor:mcgp4}
Let $S$ and $c(S)$ be as above.
\begin{enumerate}
\item
We have $A(P_4)<\Mod(S)$ if and only if $c(S)\geq 2$;
\item
If $c(S)<2$ then some finite index subgroup of $\Mod(S)$ is a free group or a direct product of a free group with $\Z$.
\end{enumerate}
\end{cor}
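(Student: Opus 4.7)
The plan is to prove (1) and (2) in tandem, with Theorem~\ref{thm:raagmcg} as the main engine. For the "if" direction of (1), I will exhibit a full $P_4$-subgraph of $\CC(S)$ when $c(S)\geq 2$ and apply Theorem~\ref{thm:raagmcg}. I will prove (2) by a short case analysis on the finitely many small-complexity surfaces, and then the "only if" direction of (1) will follow from (2) together with the algebraic observation that $A(P_4)$ does not embed into any virtually free group or any virtually $F\times\Z$ group.

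When $c(S)\geq 2$, the curve graph $\CC(S)$ is connected and has infinite diameter (Masur--Minsky), so I may choose a geodesic $v_0, v_1, v_2, v_3$ of length three in $\CC(S)$. Since distances along a geodesic are additive, $d_{\CC(S)}(v_i, v_j) = |i-j|$, and in particular any two vertices at index-distance at least two are non-adjacent. Therefore $\{v_0, v_1, v_2, v_3\}$ spans a full copy of $P_4\subset \CC(S)$, and Theorem~\ref{thm:raagmcg} applied to this inclusion produces the injection $A(P_4)\hookrightarrow \Mod(S)$.

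For (2), the surfaces with $c(S)<2$ satisfy $3g+n\leq 4$, leaving only $(g,n)\in\{(0,0),\ldots,(0,4),(1,0),(1,1)\}$. For $3g+n\leq 3$ the mapping class group is finite (up to free homotopy, Dehn twists about boundary components are trivial), and the conclusion holds vacuously. For the three cases $\Sigma_{0,4}, \Sigma_{1,0}, \Sigma_{1,1}$ with $c(S)=1$, $\Mod(S)$ is commensurable with $SL_2(\Z)$: directly for $\Sigma_{1,0}$ and $\Sigma_{1,1}$ via the classical identifications $\Mod(T^2)\cong \Mod(\Sigma_{1,1})\cong SL_2(\Z)$, and for $\Sigma_{0,4}$ via the hyperelliptic double cover $T^2\to\Sigma_{0,4}$. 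Since $PSL_2(\Z)\cong \Z/2 \ast \Z/3$ contains the level-two principal congruence subgroup $\Gamma(2)\cong F_2$ of index six, $\Mod(S)$ is virtually free. If one adopts a convention in which a boundary component contributes a central Dehn twist, one obtains a central $\Z$-extension of a virtually free group; because $H^2(F,\Z)=0$ for any free group $F$, such an extension virtually splits, yielding a finite-index $F\times\Z$.

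Finally I deduce (1) $\Rightarrow$ from (2) by showing that $A(P_4)$ embeds into neither a free group nor an $F\times\Z$, and neither does any of its finite-index subgroups. The free case is ruled out since $A(P_4)\supset \Z^2=\langle a,b\rangle$. For $F\times\Z$, any homomorphism $\phi\colon A(P_4)\to F\times\Z$ that sends $b$ to a central element forces $[\phi(b),\phi(d)]=1$, and hence $[b,d]=1$ in $A(P_4)$ by injectivity, contradicting the non-adjacency of $b$ and $d$ in $P_4$. On the other hand, any non-central element of $F\times\Z$ has centralizer isomorphic to $\Z^2$, whereas $C_{A(P_4)}(b)=\langle a,b,c\rangle\cong F_2\times\Z$ contains the non-abelian free subgroup $\langle a,c\rangle$, which cannot embed in $\Z^2$. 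The same contradictions apply to any finite-index $H\leq A(P_4)$ upon replacing $b$ with a suitable power $b^N\in H$, since $C_H(b^N)\supseteq H\cap\langle a,b,c\rangle$ remains finite index in $F_2\times\Z$ and hence still non-abelian. Combined with (2), this rules out $A(P_4)<\Mod(S)$ whenever $c(S)<2$. The main obstacle in the plan is the structural case analysis for $\Mod(S)$ on small surfaces in the third paragraph, where one must keep careful track of conventions about boundary components versus punctures; the curve-graph step and the centralizer argument are both short and routine.
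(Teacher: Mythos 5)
Your argument is correct and follows exactly the route the paper intends: the paper offers no proof of this corollary, merely noting that it is an ``easy consequence'' of Theorem~\ref{thm:raagmcg}, and your three steps (a full $P_4$ in $\CC(S)$ from a length-three geodesic, the small-surface case analysis, and the centralizer obstruction) fill in that outline faithfully. Two small points. First, your opening claim in the case analysis --- ``for $3g+n\leq 3$ the mapping class group is finite'' --- is false for the torus: $\Sigma_{1,0}$ has $3g+n=3$ and $c(S)=0$, yet $\Mod(T^2)\cong\SL_2(\Z)$ is infinite. You then list $\Sigma_{1,0}$ again among the ``$c(S)=1$'' surfaces and handle it correctly there, so the proof survives, but the torus is misfiled twice; it belongs only in the infinite, virtually free bucket. (As you note yourself, with several boundary components the central extension is by $\Z^k$ rather than $\Z$, which is really an imprecision in the statement of item~(2) under the paper's convention; it does not affect the deduction of item~(1), since your centralizer argument rules out $F\times\Z^k$ just as well.) Second, your final paragraph can be compressed using results already in the paper: Proposition~\ref{prop:firaag} lets you place $A(P_4)$ directly inside the finite-index free or $F\times\Z$ subgroup, avoiding the discussion of finite-index subgroups of $A(P_4)$ altogether, and since free groups and $F\times\Z$ are right-angled Artin groups on cographs, Proposition~\ref{prop:cograph} immediately forbids $A(P_4)$ from embedding in either. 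Your hands-on centralizer argument is a perfectly good, self-contained substitute, but it re-proves a special case of machinery the paper has already set up.
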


\begin{cor}\label{cor:allraags}
Let $\gam$ be given. Then there exists a surface $S$ such that $A(\gam)<\Mod(S)$.
\end{cor}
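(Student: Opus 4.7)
The plan is to invoke Theorem~\ref{thm:raagmcg}: it suffices to produce, for an arbitrary finite simplicial graph $\gam$, a surface $S$ and an injective, full-subgraph map $\iota\co\gam\hookrightarrow\CC(S)$, i.e.\ an injection on vertex sets for which $\{v,w\}\in E(\gam)$ if and only if $\iota(v)$ and $\iota(w)$ are disjoint simple closed curves on $S$. Once such an $S$ is in hand, Theorem~\ref{thm:raagmcg} furnishes $N\gg 0$ so that $v\mapsto T_{\iota(v)}^N$ embeds $A(\gam)$ into $\Mod(S)$.

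To build $S$, I would use a plumbing construction keyed to $\gam$. For each vertex $v\in V(\gam)$, take a copy of a one-holed torus $T_v$ equipped with an essential non-separating simple closed curve $c_v$ and a properly embedded arc $\alpha_v$ meeting $c_v$ transversely in a single point. Starting from the disjoint union $\coprod_v T_v$, for each non-edge $\{v,w\}$ of $\gam$ I would plumb $T_v$ to $T_w$ along small square neighborhoods of subarcs of $\alpha_v$ and $\alpha_w$, choosing the squares pairwise disjoint as $\{v,w\}$ varies so that distinct plumbings do not interact. Attaching trivial tubes disjoint from all $c_v$ to make the result connected, and capping any leftover boundary with disks, yields a connected orientable surface $S$ of finite type on which the family $\{c_v\}_{v\in V(\gam)}$ lives.

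Next I would verify that $\iota\co v\mapsto c_v$ gives the required full-subgraph embedding. Each $c_v$ is essential and non-separating in $S$ because it is the core of an embedded one-holed torus subsurface, and the $c_v$ are pairwise non-isotopic since any two of them have geometric intersection number either $0$ or $1$. For $\{v,w\}\in E(\gam)$, no plumbing was performed between $T_v$ and $T_w$, so $c_v$ and $c_w$ lie in disjoint subsurfaces and are adjacent in $\CC(S)$. For $\{v,w\}\notin E(\gam)$, the plumbing contributes one transverse intersection between $c_v$ and $c_w$; since geometric intersection number is a homotopy invariant, they cannot be isotoped disjoint, so they are non-adjacent in $\CC(S)$. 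This yields the full-subgraph embedding $\iota\co\gam\hookrightarrow\CC(S)$ and the corollary follows by Theorem~\ref{thm:raagmcg}.

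The main point requiring care is the coherence of the plumbings: one must check that the plumbings indexed by distinct non-edges do not accidentally create intersections between curves associated with edges of $\gam$, nor cancel the intended intersection at a non-edge. This is where choosing pairwise disjoint plumbing squares is essential, for then each plumbing is a local modification whose effect on intersection numbers is confined to the single pair of curves involved. Once that local-to-global decoupling is established, the combinatorics of intersection numbers on $S$ reads off exactly as dictated by $\gam^c$, and the construction succeeds.
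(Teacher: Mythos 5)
This is the paper's intended argument: the corollary is stated as an immediate consequence of Theorem~\ref{thm:raagmcg} once $\gam$ is realized as a full subgraph of $\CC(S)$ for some $S$, and your plumbing construction is the standard way to do that. Two details in your write-up need repair, though neither changes the approach. First, since $\alpha_v$ meets $c_v$ in only one point, at most one of your pairwise disjoint squares along $\alpha_v$ can actually be crossed by $c_v$, so a vertex with two or more non-neighbors in $\gam$ would fail to acquire all the required intersections; the fix is to plumb along small square neighborhoods of pairwise disjoint subarcs of $c_v$ itself (with the core direction of $c_v$ glued to the transverse direction of $c_w$), so that every plumbing at $v$ contributes one transverse crossing of $c_v$ with the relevant $c_w$. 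Second, geometric intersection number $0$ does not show that two curves are non-isotopic (isotopic curves are disjoint up to isotopy), so your non-injectivity argument for adjacent pairs is incomplete; instead observe that each $c_v$ is the core of its own handle, so the classes $[c_v]$ are linearly independent in $H_1(S;\Z)$ and in particular the curves are pairwise non-isotopic and each is essential. With those adjustments the map $v\mapsto c_v$ is an injective, adjacency- and non-adjacency-preserving map $\gam\to\CC(S)$, and Theorem~\ref{thm:raagmcg} finishes the proof exactly as you say.
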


The following proposition is very easy yet rather useful:

\begin{prop}\label{prop:firaag}
Let $G$ be a group, let $H<G$ be a finite index subgroup, and let $A(\gam)<G$. Then $A(\gam)<H$.
\end{prop}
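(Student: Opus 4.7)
The plan is to reduce the claim to the following self-referential property of right-angled Artin groups: every finite index subgroup of $A(\gam)$ contains an isomorphic copy of $A(\gam)$. Once this is in hand, the proposition is immediate, since $K := A(\gam) \cap H$ has index at most $[G:H]$ in $A(\gam)$, and any copy of $A(\gam)$ sitting inside $K$ automatically sits inside $H$.

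Concretely, I would first set $N$ to be a common multiple of the orders of cosets in $A(\gam)/K$, for instance $N = [A(\gam):K]!$, so that $g^N \in K$ for every $g \in A(\gam)$; in particular $v^N \in K$ for every vertex $v \in V(\gam)$. Next I would observe that the assignment $v \mapsto v^N$ preserves each defining commutation relation of $A(\gam)$, and hence extends to a homomorphism $\phi \co A(\gam) \to A(\gam)$ whose image is contained in $K$.

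The heart of the argument, and its main obstacle, will be showing that $\phi$ is injective. For this I would invoke the Hermiller--Meier criterion recalled in Section~2: a nontrivial reduced word in $A(\gam)$ cannot be shortened by iteratively swapping adjacent commuting vertices and performing free reductions. Starting from a reduced word $1 \neq w \in A(\gam)$, I would verify that $\phi(w)$ remains reduced by pulling back each admissible shortening move on $\phi(w)$ to an admissible shortening move on $w$: a commutation of adjacent powers $u^N$ and $v^N$ in $\phi(w)$ lifts to commuting the $N$ occurrences of $u$ past the $N$ occurrences of $v$ in $w$, while a cancellation of some $v^N$ against an adjacent $v^{-N}$ in $\phi(w)$ lifts to cancelling some $v$ against a $v^{-1}$ in $w$. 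Hence any shortening of $\phi(w)$ would yield a shortening of $w$, contradicting reducedness. Consequently $\phi$ is injective, and $\phi(A(\gam)) \subseteq K \subseteq H$ provides the required copy of $A(\gam)$ inside $H$.
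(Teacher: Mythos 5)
Your proof is correct and follows essentially the same route as the paper: raise each generator to a factorial power so that it lands in $H$ (the paper uses the normal core of $H$ in $G$, you use $A(\gam)\cap H$, which amounts to the same thing), and use that $v\mapsto v^N$ is injective. The only difference is that you supply a Hermiller--Meier argument for the injectivity of $v\mapsto v^N$, which the paper simply asserts as "not difficult to see"; your argument for it is sound, though the phrase "orders of cosets in $A(\gam)/K$" should be replaced by the order of the permutation induced by $g$ on the cosets, since $K$ need not be normal.
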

\begin{proof}
It is not difficult to see that for each $N\neq 0$, we have $\langle v^N\mid v\in V(\gam)\rangle\cong A(\gam)$. Let $n=[G:H]$. Then the permutation action of $G$ on the cosets of $H$ in $G$ furnishes a normal subgroup $K\leq H<G$ which has index at most $n!$. If $g\in G$ is any element, we have $g^{n!}\in K$, whence the claim of the proposition.
\end{proof}

Corollary~\ref{cor:mcgp4} and Proposition~\ref{prop:firaag} together show that mapping class groups (and their finite index subgroups) either contain a copy of $A(P_4)$, or they contain no interesting right-angled Artin groups at all.

\subsection{Braid groups}

Recall that the braid group $B_n$ on $n$ strands has the presentation \[B_n\cong\langle s_1,\ldots,s_{n-1}\mid s_is_{i+1}s_i=s_{i+1}s_is_{i+1},\, [s_i,s_j]=1 \textrm{ whenever } |i-j|>1\rangle.\] It is standard that the braid group is identified with the mapping class group of a disk with $n$ punctures or marked points, so that the majority of the discussion of Subsection~\ref{subsec:mcg} applies. What is not so immediate is Corollary~\ref{cor:allraags} for braid groups. Given $\gam$, any na\"ive construction of $S$ generally results in a surface with positive genus, and it is unclear that one can arrange the surface $S$ to be planar. One cannot apply Theorem~\ref{thm:raagmcg}, since not every graph can be realized inside of the curve graph of a multiply punctured disk. Corollary~\ref{cor:allraags} does hold for braid groups, however.

The starting point is the following result of Crisp--Wiest~\cite{CW2007}:

\begin{thm}\label{thm:crispwiest}
Suppose $\gam^c$ is planar. Then for some $n\gg 0$, we have $A(\gam)<B_n$.
\end{thm}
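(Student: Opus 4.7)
The plan is to apply Theorem~\ref{thm:raagmcg} to the mapping class group of an $n$-times punctured disk, which contains $B_n$ as (essentially) an index-one subgroup. It therefore suffices to realise $\gam$ as a full subgraph of $\CC(D_n)$ for some punctured disk $D_n$, i.e.\ to find an injection $v\mapsto\alpha_v$ from $V(\gam)$ to isotopy classes of essential simple closed curves in $D_n$ such that $\alpha_v$ and $\alpha_w$ admit disjoint representatives if and only if $v$ and $w$ are adjacent in $\gam$.

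To build such a curve system, fix a planar embedding of $\gam^c$ inside a large disk $D$. Thicken each vertex $v$ of $\gam^c$ to a small closed sub-disk $D_v$ containing two punctures $v^{\pm}$, and thicken each edge $e=\{v,w\}$ of $\gam^c$ to a thin rectangular band $R_e$ joining $D_v$ to $D_w$ and containing one additional puncture $p_e$ near its centre. This produces a punctured disk $D_n$ with $n=2|V(\gam)|+|E(\gam^c)|$ punctures (add a few extra punctures in a remote corner of $D$, if needed, so that none of the curves constructed below will be peripheral or bound a trivial complement). For each $v\in V(\gam)$, define $\alpha_v$ to be the boundary of a regular neighbourhood of $D_v\cup\bigcup_{e\ni v}R_e^{(v)}$, where $R_e^{(v)}$ is the half of the band $R_e$ adjacent to $D_v$ and containing $p_e$. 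Thus $\alpha_v$ is a simple closed curve enclosing exactly $\{v^{\pm}\}\cup\{p_e:e\ni v\}$.

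Two essential simple closed curves in a punctured disk can be isotoped to be disjoint if and only if their enclosed puncture sets $A, A'\subset\{p_1,\ldots,p_n\}$ are either disjoint or nested; otherwise they cross. If $v$ is adjacent to $w$ in $\gam$, then $v$ is non-adjacent to $w$ in $\gam^c$, so the defining neighbourhoods of $\alpha_v$ and $\alpha_w$ can be chosen disjoint in $D$ (this is where planarity is used), hence $A_v\cap A_w=\varnothing$ and $\alpha_v,\alpha_w$ are disjoint. If $v$ is non-adjacent to $w$ in $\gam$, then $v$ and $w$ share an edge $e$ of $\gam^c$, and $p_e\in A_v\cap A_w$, $v^{\pm}\in A_v\setminus A_w$, $w^{\pm}\in A_w\setminus A_v$, with at least one further puncture outside both; the four regions are nonempty, so $A_v$ and $A_w$ cross and $\alpha_v,\alpha_w$ intersect essentially. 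This yields the desired full embedding $\gam\hookrightarrow\CC(D_n)$, and Theorem~\ref{thm:raagmcg} then supplies an $N\gg 0$ such that $v\mapsto T_{\alpha_v}^N$ defines an injection $A(\gam)\hookrightarrow\Mod(D_n)$, and therefore into $B_n$.

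The principal obstacle is controlling the global intersection pattern of all the $\alpha_v$ simultaneously, and this is exactly the point at which planarity of $\gam^c$ becomes indispensable. Planarity is precisely the hypothesis that allows the entire graph $\gam^c$ to be drawn in the disk with no edge crossings, so that the neighbourhoods used to define $\alpha_v$ and $\alpha_w$ can be taken pairwise disjoint whenever $v,w$ are non-adjacent in $\gam^c$. Absent planarity, any drawing of $\gam^c$ in the disk would introduce spurious edge-crossings that would force extra essential intersections among the $\alpha_v$ and destroy the full-subgraph property required by Theorem~\ref{thm:raagmcg}.
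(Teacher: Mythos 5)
Your proof is correct and follows essentially the same route as the paper's sketch: embed $\gam^c$ in a disk, thicken the star of each vertex into a punctured subsurface so that overlaps record adjacency in $\gam^c$, take boundary curves realizing $\gam$ as a full subgraph of the curve graph of the punctured disk, and invoke Theorem~\ref{thm:raagmcg}. (One small caveat: your ``disjoint or nested puncture sets'' criterion is an iff only in the direction you actually use---crossing puncture sets force essential intersection---since curves with nested or equal enclosed puncture sets need not be isotopic to disjoint position; the other case is covered by your explicitly disjoint neighbourhoods, so nothing is lost.)
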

\begin{proof}[Sketch of proof]
Embed $\gam^c$ into a disk $D$. For each vertex $v\in V(\gam)$, we choose a subdisk $D_v\subset D$ so that $D_v\cap D_w=\varnothing$ precisely then $\{v,w\}\notin E(\gam^c)$. We then introduce some punctures so that each $D_v$ becomes a subsurface of negative Euler characteristic, and so that all intersections of subdisks are essential. We then choose essential embedded curves $\gamma_v\subset D_v$ which are not parallel to a puncture, and such that $\gamma_v\cap\gamma_w=\varnothing$ if and only if $D_v\cap D_w=\varnothing$. Applying Theorem~\ref{thm:raagmcg}, we get the conclusion of the result.
\end{proof}

With some more care and a slightly different choice of mapping classes, one can say something slightly stronger: one can arrange the subgroup $A(\gam)<B_n$ to be quasi--isometrically embedded.

Theorem~\ref{thm:crispwiest} suggests that one might attempt to embed a given right-angled Artin group $A(\gam)$ into a larger group $A(\Lambda)$, where $\Lambda^c$ is planar. We now sketch how one can do this.

We assume that $\gam^c$ is connected, which is a harmless assumption since it merely assumes that $A(\gam)$ does not split as a direct product of two right-angled Artin subgroups. Let $X$ be the universal cover of $\gam^c$, viewed as a simplicial $1$--complex, and write $p\colon X\to \gam^c$ for the covering map. If $T\subset X$ is a connected subgraph then $T$ is contractible and hence planar. Fix a vertex $x_0\in X$ and let $T_N$ be the $N$--neighborhood of $x_0$ in $X$. There is a natural, well--defined homomorphism \[\phi_N\colon A(\gam)\to A(T_N^c)\] given by \[\phi_N(v)=\prod_{x\in p^{-1}(v)\cap T_N} x.\] If $p^{-1}(v)\cap T_N$ is empty then we define this product to be the identity.

\begin{thm}[\cite{KK2015}]\label{thm:antitree}
If $N$ is large enough, then the map $\phi_N\colon A(\gam)\to A(T_N^c)$ is injective.
\end{thm}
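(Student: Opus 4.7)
The plan is to verify well-definedness first and then establish injectivity using left-greedy normal forms together with the tree structure of $X$.

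For well-definedness, the key observation is that two distinct preimages $x,y$ of a single vertex $v \in V(\gam^c)$ are never adjacent in $X$---otherwise the edge $\{x,y\}$ would project to a self-loop at $v$ in the simple graph $\gam^c$. Hence distinct preimages of $v$ are adjacent in $T_N^c$, so they pairwise commute in $A(T_N^c)$, making the product $\phi_N(v)$ unambiguous. Similarly, if $\{v,w\} \in E(\gam)$ then $\{v,w\} \notin E(\gam^c)$, so no lift of $v$ is adjacent to a lift of $w$ in $X$; all such lifts then pairwise commute in $A(T_N^c)$, and therefore $[\phi_N(v),\phi_N(w)]=1$. This yields a well-defined homomorphism $\phi_N \colon A(\gam) \to A(T_N^c)$.

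For injectivity, I would pick a reduced $1 \neq w \in A(\gam)$ and place it in left-greedy normal form $w = w_k\cdots w_1$ using Proposition~\ref{prop:leftgreedyexist}, selecting pivots $v_i \in \supp w_i$ with $[v_i,v_{i+1}]\neq 1$ via Proposition~\ref{prop:leftgreedy}. Since consecutive pivots are connected by an edge of $\gam^c$, this sequence lifts through $p$ to a combinatorial path $\tilde v_1,\ldots,\tilde v_k$ in $X$, which lies in $T_N$ once $N$ exceeds the length of $w$. Expanding $\phi_N(w)$ as a word in the generators of $A(T_N^c)$, I would apply the Hermiller--Meier criterion: $\phi_N(w)=1$ would mean the expanded word reduces to the empty word via commutation swaps of adjacent-in-$T_N^c$ letters and free cancellations. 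The pivot lifts $\tilde v_i$ appear as a distinguished \emph{backbone} in the expansion, and consecutive backbone letters $\tilde v_i,\tilde v_{i+1}$ do not commute in $A(T_N^c)$ because they are adjacent in the tree $T_N$. Tracking this backbone through all admissible Hermiller--Meier moves---and using that distinct lifts of a single vertex project disjointly under the commutation relations---shows it cannot be dissolved, forcing $\phi_N(w)\neq 1$.

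The main obstacle is upgrading this per-element argument to a uniform statement: a single $N$ witnessing injectivity for \emph{every} nontrivial element of $A(\gam)$. The naive bound $N(w)$ grows with the length of $w$, whereas the theorem asserts an $N$ depending only on $\gam$. This is the principal subtlety and presumably requires exploiting the recursive tree structure of $X$ rather than its metric size. One promising route is to apply Theorem~\ref{thm:gex}(1) by constructing an embedding of $\gam$ into the extension graph $(T_N^c)^e$ for $N$ suitably large, obtaining an abstract inclusion $A(\gam)\hookrightarrow A(T_N^c)$ and then identifying this with $\phi_N$ up to an automorphism of $A(\gam)$. An alternative route is to produce a retraction $r_N \colon A(T_N^c) \to A(\gam)$ satisfying $r_N \circ \phi_N = \mathrm{id}$, which would make the uniformity immediate; the challenge is to define such an $r_N$ compatibly with the relations of $A(T_N^c)$, and I expect this to be the most delicate combinatorial step.
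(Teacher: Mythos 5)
The survey does not actually reproduce a proof of this theorem (it only records that the argument of~\cite{KK2015} ``relies on cancellation theory in right-angled Artin groups''), so you should be measured against that source. Your well-definedness argument is correct and standard. The injectivity sketch, however, has a genuine gap at its core: the claim that the backbone of pivot lifts ``cannot be dissolved'' under Hermiller--Meier moves is precisely the hard content, and as stated it would fail without substantial further input. The expansion of $\phi_N(w)$ contains \emph{every} lift in $T_N$ of every letter of $w$, not just the pivots; each lift $\tilde v$ of a vertex $v$ recurs once per occurrence of $v^{\pm1}$ in $w$, and since distinct lifts of $v$ pairwise commute there is no canonical ``backbone occurrence'' to track. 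Worse, boundary effects work against you: since $X$ is a tree, a lift $\tilde v$ at distance exactly $N$ from $x_0$ has only one of its $X$-neighbors inside $T_N$, so in $A(T_N^c)$ it commutes with every lift of every other vertex save that single neighbor --- in particular with lifts of vertices $u$ for which $[v,u]\neq 1$ in $A(\gam)$. Such boundary letters can therefore cancel even when the corresponding letters of $w$ do not cancel in $A(\gam)$. Any correct argument must quantify over all lifts and control these boundary configurations; this is exactly where the cancellation theory of~\cite{KK2015} does its work, and your sketch asserts the conclusion rather than proving it.

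The second gap is the one you flag yourself: uniformity of $N$. Your per-element argument yields only an $N(w)$ growing with the length of $w$. Monotonicity does hold --- the retraction $A(T_{N'}^c)\to A(T_N^c)$ killing the outer vertices satisfies $\pi\circ\phi_{N'}=\phi_N$, so $\ker\phi_{N'}\subseteq\ker\phi_N$ for $N'\geq N$ --- but a decreasing chain of normal subgroups with trivial intersection need not terminate, so this does not produce a single $N$. Neither of your proposed repairs closes the gap. The map sending each lift to its $p$-image is not a homomorphism $A(T_N^c)\to A(\gam)$: two vertices of $T_N$ that are non-adjacent in the tree (hence commute in $A(T_N^c)$) can project to vertices joined by an edge of $\gam^c$ (hence non-commuting in $A(\gam)$); and even where it made sense it would send $\phi_N(v)$ to $v^{m}$ with $m=|p^{-1}(v)\cap T_N|$, not to $v$. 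The extension-graph route would at best produce \emph{some} embedding of $A(\gam)$ into $A(T_N^c)$, not injectivity of the specific map $\phi_N$, which is what Theorem~\ref{thm:crispwiest} needs downstream. So the uniform bound genuinely requires the global cancellation argument of~\cite{KK2015} rather than a formal reduction.
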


Again, one can prove that $\phi_N$ is a quasi--isometric embedding, if $N\gg 0$. The proof of Theorem~\ref{thm:antitree} relies on cancellation theory in right-angled Artin groups, and we will not reproduce it here. Combining Theorem~\ref{thm:antitree} with Theorem~\ref{thm:crispwiest}, we obtain the following:

\begin{thm}\label{thm:raagbraid}
Let $\gam$ be a finite simplicial graph. Then for all $n\gg 0$, we have $A(\gam)<B_n$.
\end{thm}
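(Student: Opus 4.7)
The plan is to compose the two embeddings produced by Theorem~\ref{thm:antitree} and Theorem~\ref{thm:crispwiest}, so the task reduces to arranging that the intermediate right-angled Artin group has a defining graph whose complement is planar.

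First I would reduce to the case where $\gam^c$ is connected. If $\gam^c$ decomposes into connected components with vertex sets $V_1,\ldots,V_k$, then $\gam$ is the join $\gam_1*\cdots*\gam_k$ of the induced subgraphs $\gam_i$ on $V_i$, and $A(\gam)\cong A(\gam_1)\times\cdots\times A(\gam_k)$. Supposing each $A(\gam_i)$ embeds in some $A(\Lambda_i)$ with $\Lambda_i^c$ planar, the product embedding exhibits $A(\gam)$ as a subgroup of $A(\Lambda_1*\cdots*\Lambda_k)$, whose complement is the disjoint union $\Lambda_1^c\sqcup\cdots\sqcup\Lambda_k^c$, still planar. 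So it suffices to treat each $\gam_i$ individually.

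Now assume $\gam^c$ is connected, and let $X$ denote its universal cover, which is a tree because $X$ is a simply connected $1$--complex. Fix a vertex $x_0\in X$ and let $T_N\subset X$ be the $N$--neighborhood of $x_0$; this is a finite subtree of $X$. For $N$ sufficiently large, Theorem~\ref{thm:antitree} produces an injective homomorphism $\phi_N\colon A(\gam)\to A(T_N^c)$. Setting $\Lambda=T_N^c$, the complement $\Lambda^c=T_N$ is a tree and is therefore planar, so Theorem~\ref{thm:crispwiest} yields an embedding $A(\Lambda)<B_n$ for some $n\gg 0$. Composing with $\phi_N$ gives the desired injection $A(\gam)<B_n$.

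The argument itself has no real obstacles once Theorems~\ref{thm:antitree} and~\ref{thm:crispwiest} are in hand; the whole proof is just the observation that the output of Theorem~\ref{thm:antitree} — a right-angled Artin group on the complement of a tree — is exactly the shape of defining graph to which Theorem~\ref{thm:crispwiest} applies. The substantive technical difficulty lies upstream, in the construction and injectivity of the tree--replacement map $\phi_N$, which rests on cancellation theory in right-angled Artin groups.
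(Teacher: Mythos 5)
Your proposal is correct and is essentially the paper's own argument: Theorem~\ref{thm:raagbraid} is obtained there precisely by composing the embedding $\phi_N\colon A(\gam)\to A(T_N^c)$ of Theorem~\ref{thm:antitree} with the embedding of Theorem~\ref{thm:crispwiest}, which applies because $(T_N^c)^c=T_N$ is a finite tree and hence planar. Your explicit reduction to the case of connected $\gam^c$ via the join decomposition is a detail the paper dismisses as a ``harmless assumption,'' and you have filled it in correctly.
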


In~\cite{KK2015}, Kim and the author provided rather poor bounds on $n$ in Theorem~\ref{thm:raagbraid} as a function of $|V(\gam)|$, but these were greatly improved by Lee--Lee~\cite{LeeLee2016}.

\subsection{Related groups}

In this subsection, we note several other natural classes of groups which are rich enough to contain lots of right-angled Artin groups. Mapping class groups are helpful in some situations:

\begin{prop}\label{prop:autfn}
If $n\geq 3$ then the groups $\Out(F_n)$ and $\Aut(F_n)$ contains $A(P_4)$.
\end{prop}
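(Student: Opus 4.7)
My plan is to exhibit, for each $n \geq 3$, a surface $S$ with $\pi_1(S) \cong F_n$ whose mapping class group contains a copy of $A(P_4)$, and then transport this copy into $\Out(F_n)$ and $\Aut(F_n)$ through the classical Dehn--Nielsen--Baer embeddings.

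First, I would take $S = S_{1, n-1}$, the orientable surface of genus one with $n-1$ punctures. Then $\pi_1(S) \cong F_n$ (rank $2g + p - 1 = n$), and the complexity is $c(S) = 3 - 3 + (n-1) = n - 1 \geq 2$. By Corollary~\ref{cor:mcgp4} (equivalently, by selecting any $P_4$ subgraph of $\CC(S)$, whose existence is guaranteed by $c(S) \geq 2$, and invoking Theorem~\ref{thm:raagmcg}), there is an embedding $\iota \colon A(P_4) \hookrightarrow \Mod(S)$ sending each generator to a sufficiently high power of a Dehn twist about a chosen simple closed curve in $S$.

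To land in $\Out(F_n)$, I would invoke the Dehn--Nielsen--Baer theorem for punctured surfaces of negative Euler characteristic: the natural homomorphism $\Mod(S) \to \Out(\pi_1(S)) \cong \Out(F_n)$ is injective, so composing with $\iota$ yields $A(P_4) < \Out(F_n)$. To land in $\Aut(F_n)$, I would pick a basepoint $* \in S$ in the complement of the four curves used to build $\iota$ and use the based analogue: the pointed mapping class group $\Mod(S, *)$ acts faithfully on $\pi_1(S, *) \cong F_n$, giving $\Mod(S, *) \hookrightarrow \Aut(F_n)$. The four Dehn twist generators of $\iota(A(P_4))$ admit representatives supported in annular neighborhoods disjoint from $*$, and these lift to $\Mod(S, *)$ and produce the desired embedding $A(P_4) \hookrightarrow \Mod(S, *) \hookrightarrow \Aut(F_n)$.

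The main obstacle is verifying that lifting the generators of $\iota(A(P_4))$ through the Birman exact sequence $1 \to \pi_1(S, *) \to \Mod(S, *) \to \Mod(S) \to 1$ does not destroy the $A(P_4)$ structure. Commutation of adjacent generators is immediate once their Dehn twist representatives are chosen with pairwise disjoint annular supports avoiding $*$; non-commutation of the remaining pairs is inherited from $\Mod(S)$ via the surjection $\Mod(S, *) \twoheadrightarrow \Mod(S)$ together with the already-established faithfulness of $\iota$. A minor variant, in case one prefers to avoid the based formulation, is to first show $A(P_4) < \Aut(F_3)$ using $(S_{1,2}, *)$ and then push forward along the inclusions $\Aut(F_3) \hookrightarrow \Aut(F_n)$ (extension by identity on extra generators) and $\Aut(F_3) \hookrightarrow \Out(F_n)$ (whose triviality-of-kernel is a short free-group centralizer computation).
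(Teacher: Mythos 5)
Your proposal is correct and follows essentially the same route as the paper, which realizes $A(P_4)$ in the mapping class group of a twice--punctured torus (respectively, a twice--punctured torus with a marked point) via Corollary~\ref{cor:mcgp4} and views that group as a subgroup of $\Out(F_3)$ (respectively $\Aut(F_3)$). Your use of $S_{1,n-1}$ for general $n$, and your care with the Birman exact sequence and the stabilization $\Aut(F_3)\hookrightarrow\Aut(F_n)$, are just explicit elaborations of the same idea.
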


This can be seen be considering the mapping class group of a twice--punctured torus and a twice--punctured torus with a marked point, viewed as subgroups of $\Out(F_3)$ and $\Aut(F_3)$ respectively.

There are many other natural classes of groups which contain $A(P_4)$ but which do not contain subgroups commensurable with mapping class groups. Recall that the \emph{Torelli group} $\mathcal{I}(S)$ is the kernel of the homology representation \[\Mod(S)\to\Aut(H_1(S,\Z)).\] The \emph{Johnson kernel} $\mathcal{K}(S)<\mathcal{I}(S)$ is the group generated by Dehn twists about separating loops in $S$.

\begin{prop}\label{prop:torelli}
If the genus of $S$ is at least $3$, then the Torelli group $\mathcal{I}(S)$ and the Johnson kernel $\mathcal{K}(S)$ contain $A(P_4)$.
\end{prop}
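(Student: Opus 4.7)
The plan is to invoke Theorem~\ref{thm:raagmcg}. Since Dehn twists about separating simple closed curves lie in $\mathcal{K}(S)$ by definition and $\mathcal{K}(S)\subset\mathcal{I}(S)$, it suffices to exhibit four separating curves $a,b,c,d$ on $S$ realizing $P_4$ as an induced subgraph of the curve graph $\CC(S)$: the pairs $\{a,b\}, \{b,c\}, \{c,d\}$ should be disjoint while $\{a,c\}, \{b,d\}, \{a,d\}$ must each intersect essentially. Applying Theorem~\ref{thm:raagmcg} with $N\gg 0$ will then produce an injection $A(P_4)\hookrightarrow\mathcal{K}(S)\subset\mathcal{I}(S)$.

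I would first treat the case $g=3$ and reduce higher-genus cases to it via a subsurface embedding $\Sigma_{3,1}\hookrightarrow S$ with separating boundary: a simple closed curve inside $\Sigma_{3,1}$ that separates $\Sigma_{3,1}$ automatically separates $S$, since the complementary piece is attached along a single boundary component. On $\Sigma_3$, I would start with two disjoint separating curves $a$ and $b$ in the nested configuration, cutting $\Sigma_3$ into three pieces $L\cong\Sigma_{1,1}$, $M\cong\Sigma_{1,2}$, and $R\cong\Sigma_{1,1}$ with $a=L\cap M$ and $b=M\cap R$. Then I would choose a nonseparating simple closed curve $\gamma\subset L\cup M\cong\Sigma_{2,1}$ that is disjoint from $R$ and has $i(\gamma,a)=2$---the minimum possible since $a$ is homologically trivial---and set $c:=T_\gamma^N(a)$ for $N\gg 0$. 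Then $c$ is separating (being the homeomorphic image of $a$), disjoint from $b$ (since $T_\gamma$ is supported in $L\cup M$), and satisfies $i(c,a)\geq N\cdot i(\gamma,a)^2-i(a,a)=4N$ by the standard intersection inequality for powers of Dehn twists.

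The crucial remaining step is producing $d$. I would analogously set $d:=T_\delta^M(b)$ for a nonseparating curve $\delta$ lying in the genus-two component of $\Sigma_3\setminus c$ and satisfying $i(\delta,a), i(\delta,b)\geq 2$. Then $d$ is separating, disjoint from $c$, and the bound $i(T_\delta^M b,\alpha)\geq M\cdot i(\delta,\alpha)\cdot i(\delta,b)-i(\alpha,b)$ applied with $\alpha\in\{a,b\}$ yields $i(d,a), i(d,b)\geq 4M$. The main obstacle is verifying that such a $\delta$ exists: by choosing $\gamma$ above to lie entirely in $L\cup M$ and to miss $R$, one arranges that the genus-two component of $\Sigma_3\setminus c$ contains $b$, all of $R$, and sufficient arcs of $a$ to accommodate a simple closed curve $\delta$ that crosses both $a$ and $b$ essentially while avoiding $c$. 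Once $\delta$ is in hand, the rest of the argument is intersection-number bookkeeping via the inequality above.
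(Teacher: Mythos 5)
The paper states this proposition without proof, but your strategy---realize $P_4$ as an induced subgraph of $\CC(S)$ using only \emph{separating} curves and apply Theorem~\ref{thm:raagmcg}, so that the high twist powers land in $\mathcal{K}(S)\subset\mathcal{I}(S)$---is clearly the intended argument, and your reduction to genus $3$ together with the twist-power construction of the configuration (including the parity observation $i(\gamma,a)\geq 2$ and the standard lower bound for $i(T_\gamma^N a,\cdot)$) is sound. The one point you rightly flag, the existence of $\delta$, does go through: the genus-two component $Q$ of $\Sigma_3\setminus c$ contains $b$, all of $R$, and a nonempty family of essential arcs of $a$ (essential because $a$ and $c$ are in minimal position, so there are no bigons or half-bigons), and a simple closed curve in $Q$ that crosses one such arc twice without creating bigons and runs over the handle of $R$, crossing $b$ twice, furnishes the required $\delta$; one should also note, as a final bookkeeping step, that the four curves are pairwise non-isotopic (the disjoint pairs cobound genus-one subsurfaces rather than annuli), which is needed for the map $P_4\to\CC(S)$ to be injective.
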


The Torelli group and Johnson filtration fit inside of a more general filtration of $\Mod(S)$, called the \emph{Johnson filtration} (cf.~\cite{BassLubotzky1994,FM2012}. For simplicity, we will define it here for a closed surface with a marked point $p$, so that $\Mod(S,p)$ can be identified with a finite index subgroup of $\Aut(\pi_1(S,p))$.

Let $\gamma_k(S)$ be the $k^{th}$ term of the lower central series of $\pi_1(S)$, so that $\gamma_1(S)=\pi_1(S)$ and $\gamma_k(S)=[\pi_1(S),\gamma_{k-1}(S)]$. We write \[\mathcal{J}_k(S)=\ker\{\Mod(S)\to\Aut(\pi_1(S)/\gamma_k(S))\},\] and then $\{\mathcal{J}_k(S)\}_{k\geq 1}$ is the Johnson filtration of $\Mod(S)$. Since $\pi_1(S)$ is residually nilpotent, we have that \[\bigcap_k\mathcal{J}_k(S)=\{1\}.\] Note that with this definition, we have that $\mathcal{J}_1(S)=\Mod(S)$, that $\mathcal{J}_2(S)=\mathcal{I}(S)$, and that (though this is not obvious) $\mathcal{J}_3(S)=\mathcal{K}(S)$.

\begin{prop}\label{prop:johnson}
Let $k\geq 4$, and let $S$ have genus at least $5$. Then $A(P_4)<\mathcal{J}_k(S)$.
\end{prop}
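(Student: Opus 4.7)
The plan is to exhibit four mapping classes in $\mathcal{J}_k(S)$ whose supports realize the graph $P_4$, and then invoke a subsurface-supported strengthening of Theorem~\ref{thm:raagmcg}. This is needed because Dehn twists alone will not suffice: even for a separating curve $\alpha$, the twist $T_\alpha$ lies in $\mathcal{K}(S)=\mathcal{J}_3(S)$ but is detected nontrivially by the first Morita homomorphism $\mathcal{J}_3/\mathcal{J}_4\to(\text{a free abelian group})$, so no power of $T_\alpha$ enters $\mathcal{J}_4$.

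Using the hypothesis $g(S)\geq 5$, I would first find four essential subsurfaces $\Sigma_a,\Sigma_b,\Sigma_c,\Sigma_d$ of $S$, each of positive genus, whose disjointness pattern is $P_4$: the pairs $(\Sigma_a,\Sigma_b)$, $(\Sigma_b,\Sigma_c)$, $(\Sigma_c,\Sigma_d)$ are isotopable to be disjoint, while the remaining three pairs meet essentially. Genus five leaves enough room to arrange such a configuration.

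Next, in each $\Sigma_x$ I would construct a nontrivial $f_x\in\mathcal{J}_k(S)$ supported in $\Sigma_x$. Since the Johnson filtration is a central series with $[\mathcal{J}_i,\mathcal{J}_j]\subset\mathcal{J}_{i+j}$, an iterated commutator of depth on the order of $\log k$, whose entries are Torelli-type mapping classes supported in $\Sigma_x$ (for instance bounding-pair maps inside $\Sigma_x$), lies in $\mathcal{J}_k(S)$. Nontriviality holds because each $\Sigma_x$ has enough genus to carry a nonvanishing iterated commutator in its own Torelli group; one verifies this using the residual torsion-free nilpotence of the target of the Johnson homomorphism. Then, by a subsurface-supported refinement of Theorem~\ref{thm:raagmcg} (in the spirit of Clay--Leininger--Mangahas), for $N\gg 0$ the subgroup $\langle f_a^N,f_b^N,f_c^N,f_d^N\rangle$ is isomorphic to $A(P_4)$. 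Because $\mathcal{J}_k(S)$ is closed under powers and generated subgroups, this copy of $A(P_4)$ lies entirely in $\mathcal{J}_k(S)$.

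The principal obstacle is showing that subsurface-supported iterated commutators actually belong to the ambient $\mathcal{J}_k(S)$ and remain nontrivial. This requires comparing the lower central series of $\pi_1(S)$ with its restriction to $\pi_1(\Sigma_x)$ and controlling how the Johnson filtration of a subsurface sits inside the global one; the assumption $g(S)\geq 5$ is what provides enough room both to realize the $P_4$ configuration of subsurfaces and to house deep enough commutators within each $\Sigma_x$.
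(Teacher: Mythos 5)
The paper states Proposition~\ref{prop:johnson} without proof, so there is nothing of the author's to compare against; your strategy---replace Dehn twists by elements of $\mathcal{J}_k(S)$ supported on four subsurfaces arranged in a $P_4$ intersection pattern, then apply the general form of Theorem~\ref{thm:raagmcg}---is the natural and surely intended one, and your opening observation that no power of a separating twist reaches $\mathcal{J}_4(S)$ (the graded quotients of the Johnson filtration are torsion--free) correctly explains why the proposition does not follow from Theorem~\ref{thm:raagmcg} as literally stated. One simplification: the step you single out as the ``principal obstacle'' is not an obstacle. You never need the Johnson filtration of a subsurface or any comparison of lower central series of $\pi_1(\Sigma_x)$ and $\pi_1(S)$. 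Inside each $\Sigma_x$ choose two intersecting curves each of which is separating \emph{in $S$} (or two bounding pairs of $S$); the corresponding twists lie in $\mathcal{I}(S)=\mathcal{J}_2(S)$, are supported in $\Sigma_x$, and generate a nonabelian free group. Iterated commutators of these lie in the \emph{ambient} $\mathcal{J}_k(S)$ by the N--series property of the filtration (with the paper's indexing this reads $[\mathcal{J}_i,\mathcal{J}_j]\subset\mathcal{J}_{i+j-1}$ rather than $\mathcal{J}_{i+j}$, a harmless shift), and they are nontrivial because basic commutators in a free group are nontrivial.

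The genuine gap is in the step you pass over quickly: the final application of a ``subsurface--supported refinement.'' The Clay--Leininger--Mangahas framework requires the $f_x$ to be pseudo-Anosov on their supporting subsurfaces, which your iterated commutators will generally not be; the applicable tool is the general version of Theorem~\ref{thm:raagmcg} from~\cite{Koberda2012} (the ``somewhat greater generality'' alluded to after its statement), and that theorem produces the right-angled Artin group on the co-intersection graph of the \emph{actual} canonical reduction systems and pseudo-Anosov supports of the $f_x$, not of the ambient subsurfaces $\Sigma_x$. An iterated commutator of elements supported in $\Sigma_x$ can have support properly contained in $\Sigma_x$, and if, say, the supports of $f_a$ and $f_c$ shrink until they are disjoint, you acquire an extra commutation and obtain only a proper quotient of $A(P_4)$ (in the worst case $\Z^4$). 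So you must additionally arrange, and verify, that each $f_x$ has a component of its support meeting $\Sigma_x\cap\Sigma_y$ for each pair $\{x,y\}$ that is supposed to fail to commute---for instance by choosing the two generating curves in $\Sigma_x$ to fill $\Sigma_x$ and checking that the relevant commutator acts with full support there. That verification, together with the count showing that four genus--two subsurfaces overlapping consecutively in genus--one pieces fit exactly into genus $4\cdot 2-3\cdot 1=5$, is where the real content of the proof lies.
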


Right-angled Artin groups are also linear over $\Z$ and hence occur inside of $\SL_n(\Z)$ for $n\gg 0$. The reader may consult Humphries~\cite{Humphries1994} and Hsu--Wise~\cite{HsuWise1999} for the original proofs of this fact. Another way to see this is to use a result of Davis--Januszkiewicz~\cite{DJ2000} which shows that a given right-angled Artin group embeds as a finite index subgroup a right-angled Coxeter group, and right-angled Coxeter groups are always linear over $\Z$.

\begin{prop}\label{prop:sln}
For $n\geq 8$, we have $A(P_4)<\SL_n(\Z)$.
\end{prop}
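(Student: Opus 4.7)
My plan is to follow the hint already given in the paragraph preceding the statement, namely to combine the Davis--Januszkiewicz embedding with the Tits (geometric) representation of a right-angled Coxeter group. The target dimension $n=8$ is exactly twice the number of vertices of $P_4$, which matches the size of the Coxeter group produced by the Davis--Januszkiewicz doubling, so the choice of bound suggests this route.

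\textbf{Step 1: Doubling.} Given a finite graph $\gam$, form the graph $\gam^{\pm}$ whose vertex set is $\{v^+,v^-:v\in V(\gam)\}$, and whose edges are $\{v^+,v^-\}$ for each vertex $v$, together with $\{v^{\epsilon},w^{\delta}\}$ for every edge $\{v,w\}\in E(\gam)$ and every choice of signs $\epsilon,\delta\in\{\pm\}$. Let $W(\gam^{\pm})$ be the corresponding right-angled Coxeter group, with generators $s_{v^{\pm}}$. The Davis--Januszkiewicz theorem~\cite{DJ2000} says that the assignment $v\mapsto s_{v^+}s_{v^-}$ defines an injective homomorphism $A(\gam)\hookrightarrow W(\gam^{\pm})$ onto a subgroup of finite index. (Well-definedness is easy: when $\{v,w\}\in E(\gam)$, all four products $s_{v^{\epsilon}}s_{w^{\delta}}$ are commuting pairs by construction of $\gam^{\pm}$.) For $\gam=P_4$ this gives a Coxeter group on eight generators.

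\textbf{Step 2: Tits/geometric representation.} Any right-angled Coxeter group on $N$ generators has a canonical faithful reflection representation $W(\Lambda)\hookrightarrow\GL_N(\Z)$ in which each generator $s_{v}$ acts as the reflection through a hyperplane with integer Gram matrix; the entries of the corresponding reflection matrices are integers because every off-diagonal entry of the Gram matrix is $0$ or $-1$ (corresponding to $m_{ij}\in\{2,\infty\}$). Applying this with $N=8$ to $W(P_4^{\pm})$ yields a faithful representation $W(P_4^{\pm})\hookrightarrow\GL_8(\Z)$.

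\textbf{Step 3: Landing in $\SL_8$.} Each Coxeter generator acts as a reflection and so has determinant $-1$. The image of a generator of $A(P_4)$ under the composition of Steps 1 and 2 is a product of two reflections, hence has determinant $+1$. Therefore the composite homomorphism $A(P_4)\to W(P_4^{\pm})\to\GL_8(\Z)$ takes values in $\SL_8(\Z)$, and it is injective because each step is. Finally, embedding $\SL_8(\Z)\hookrightarrow\SL_n(\Z)$ block-diagonally for $n\geq 8$ gives the desired inclusion.

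The only real content is the Davis--Januszkiewicz injectivity, which is assumed here as a cited input; there is essentially no obstacle beyond bookkeeping. The mild conceptual point worth recording is the determinant count in Step 3, which is what pins down $\SL$ rather than $\GL$, and the numerology $2\cdot|V(P_4)|=8$, which is what produces the specific bound $n\geq 8$.
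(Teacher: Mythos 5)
Your overall route (Davis--Januszkiewicz doubling, then the Tits reflection representation, then a determinant count to land in $\SL$ rather than $\mathrm{GL}$) is exactly the one the surrounding paragraph intends, and Steps 2 and 3 are fine. But Step 1 contains a genuine error that breaks the argument: you have included the edge $\{v^+,v^-\}$ in $\gam^{\pm}$, so $s_{v^+}$ and $s_{v^-}$ commute and hence $(s_{v^+}s_{v^-})^2=s_{v^+}^2s_{v^-}^2=1$. The image of each Artin generator is then an involution, so the map $A(\gam)\to W(\gam^{\pm})$ is not injective even on a single cyclic factor; the subgroup you generate is a quotient of the right-angled \emph{Coxeter} group on $\gam$, not of $A(\gam)$. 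The whole point of the Davis--Januszkiewicz construction is that $s_{v^+}$ and $s_{v^-}$ must \emph{not} commute, so that $\langle s_{v^+},s_{v^-}\rangle$ is infinite dihedral and $s_{v^+}s_{v^-}$ has infinite order.

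The correct doubled graph is the following: take vertices $\{s_v,t_v : v\in V(\gam)\}$, with edges $\{t_v,t_w\}$ and $\{s_v,t_w\}$ for all $v\neq w$, and $\{s_v,s_w\}$ exactly when $\{v,w\}\in E(\gam)$ --- and crucially no edge $\{s_v,t_v\}$. With this graph the assignment $v\mapsto s_vt_v$ is a well-defined injection of $A(\gam)$ onto a finite-index (index $2^{|V(\gam)|}$) subgroup, and your Steps 2 and 3 then go through verbatim: the Tits form of a right-angled Coxeter group has integer entries (off-diagonal entries $0$ or $-1$ according to $m_{ij}\in\{2,\infty\}$), the representation is faithful by Tits' theorem, each generator of $A(P_4)$ maps to a product of two reflections and so has determinant $+1$, and the block-diagonal inclusion $\SL_8(\Z)\hookrightarrow\SL_n(\Z)$ finishes the argument. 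Your observation that $2\cdot|V(P_4)|=8$ explains the bound $n\geq 8$ is correct once the doubling is repaired.
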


\subsection{Actions on compact one--manifolds}

Actions of mapping class groups on compact one--manifolds (especially the circle) are of interest from two perspectives, namely existence and non--existence.

The perspective of existence derives from a classical result of Nielsen (see~\cite{CB1988,MR2264130}).

\begin{thm}\label{thm:nielsen}
Let $S=S_{g,1}$ be a closed surface of genus $g\geq 2$ with one marked point. Then there is an injective homomorphism $\Mod(S)\to\Homeo^+(S^1)$. Moreover, this action has no global fixed points, and every orbit is dense.
\end{thm}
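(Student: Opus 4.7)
The plan is to realize $S^1$ as the Gromov boundary of $\pi_1(S_g)$ and exploit the fact that every self-homeomorphism of $S_{g,1}$ lifts to a quasi-isometry of the universal cover, which then extends canonically to the boundary circle. Fix a hyperbolic metric on $S_g$ so that the universal cover is identified with $\mathbb{H}^2$ and $\pi_1(S_g,p)$ acts as a Fuchsian group $\Gamma<\PSL_2(\R)$, giving a canonical identification $\partial\pi_1(S_g)=\partial\mathbb{H}^2\cong S^1$. The resulting circle carries a natural orientation coming from the orientation of $S_g$.

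The first task is to construct the homomorphism. Given $f\in\Homeo^+(S_{g,1})$, let $\widetilde f\colon\mathbb{H}^2\to\mathbb{H}^2$ be the unique lift fixing a chosen preimage $\widetilde p$ of $p$. Since $f$ fixes $p$, the map $\widetilde f$ is $\pi_1$-equivariant in the sense that $\widetilde f\circ\gamma=f_*(\gamma)\circ\widetilde f$ for every $\gamma\in\pi_1(S_g,p)$, where $f_*$ denotes the induced automorphism on $\pi_1$. Because $\widetilde f$ is a quasi-isometry of $\mathbb{H}^2$ (it descends to a homeomorphism of a compact hyperbolic surface and so is bi-Lipschitz on a fundamental domain), it extends to a homeomorphism $\partial\widetilde f\colon\partial\mathbb{H}^2\to\partial\mathbb{H}^2$ by the standard boundary extension theorem for hyperbolic spaces. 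The assignment $f\mapsto\partial\widetilde f$ is well-defined on homotopy classes fixing $p$, because isotopic lifts differ by bounded amounts and hence have the same boundary extension; and it is a homomorphism because compositions of lifts are lifts of compositions (with suitable basepoint choices).

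The crucial step is injectivity, which is where I expect the most care. I would route it through the Dehn--Nielsen--Baer theorem, which identifies $\Mod(S_{g,1})$ with an index-$2$ subgroup of $\Aut(\pi_1(S_g,p))$. Suppose $[f]$ acts trivially on $S^1$. Equivariance of $\widetilde f$ passes to the boundary:
\[
\partial\widetilde f\circ\gamma|_{S^1}=f_*(\gamma)|_{S^1}\circ\partial\widetilde f
\]
for every $\gamma\in\Gamma$. If $\partial\widetilde f=\id$, this forces $\gamma$ and $f_*(\gamma)$ to induce the same homeomorphism of $S^1$ for every $\gamma$. Since $\Gamma$ is non-elementary and torsion-free, its action on $\partial\mathbb{H}^2$ is faithful (an element fixing all boundary points would have to fix at least three boundary points, which is impossible for a nontrivial isometry of $\mathbb{H}^2$), so $f_*=\id$ in $\Aut(\pi_1(S_g,p))$, and Dehn--Nielsen--Baer gives $[f]=1$. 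One checks separately, using continuity of the boundary extension and the fact that $\widetilde f$ preserves an orientation coming from $f$, that the image lies in $\Homeo^+(S^1)$.

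Finally, to verify that the action has no global fixed points and that every orbit is dense, I would restrict to the point-pushing subgroup from the Birman exact sequence: for $g\geq 2$, the natural map $\pi_1(S_g,p)\hookrightarrow\Mod(S_{g,1})$ is injective, and one checks that its image acts on $S^1=\partial\pi_1(S_g)$ precisely via the Fuchsian action of $\Gamma$ on $\partial\mathbb{H}^2$. This action is known to be minimal (loxodromic fixed points are dense, and each orbit accumulates on every fixed point) and has no global fixed point since $\Gamma$ is non-elementary. Both properties immediately lift to the action of the larger group $\Mod(S_{g,1})$, completing the proof.
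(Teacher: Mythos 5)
Your proposal is correct and follows essentially the same route as the paper's own (very brief) sketch: lift a representative homeomorphism to the preferred lift on $\mathbb{H}^2$ fixing a chosen preimage of the marked point, and extend the resulting quasi-isometry to $\partial\mathbb{H}^2\cong S^1$. The additional details you supply --- injectivity via the faithfulness of the Fuchsian boundary action together with Dehn--Nielsen--Baer, and minimality/absence of global fixed points via the point-pushing subgroup acting as $\Gamma$ on the boundary --- are exactly the standard way to complete the argument the paper leaves implicit.
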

\begin{proof}[Sketch of proof]
Let $\psi\in\Mod(S)$, and lift $\psi$ to a homeomorphism $\Psi$ of $S$. Choose a preferred lift of the marked point of $S$ in the universal cover $\bH^2$ of $S$, and let $\yt{\Psi}$ be the lift of $\Psi$ to $\bH^2$ fixing this lift. Then the action (by quasi--isometries) of $\yt{\Psi}$ on $\bH^2$ extends to the boundary $\partial\bH^2\cong S^1$, and is independent of the choice of lift $\Psi$ of $\psi$, whence the result.
\end{proof}

While Nielsen's action of $\Mod(S)$ on $S^1$ is ``nice enough" analytically in the sense that the action is by \emph{quasi--symmetric} homeomorphisms, there is an essential non--differentiability to this action. For instance, one can verify directly that if $\gamma$ is a simple loop on $S$ and $z$ is an endpoint at infinity of a lift of $\gamma$ to $\bH^2$, then the action of the Dehn twist about $\gamma$ is not differentiable at $z$.

The question of whether or not Nielsen's action (or indeed any action of the mapping class group at all) can be smoothed naturally leads to the second perspective on mapping class group actions on the circle. This perspective derives from a broad analogy between mapping class groups and lattices in semisimple Lie groups. The mapping class group exhibits some behavior of a lattice in a rank one lattice, and some behavior of a higher rank lattice. Many rank one lattices have faithful actions on the circle, whereas higher rank lattices never do:

\begin{thm}[Witte--Morris~\cite{Witte1994}, Burger--Monod~\cite{BM1999}, Ghys~\cite{Ghys1999}]\label{thm:lattices1}
Let $G$ be a higher rank lattice in a non--split semisimple Lie group of rank at least two.
\begin{enumerate}
\item
Any $C^0$ action of $G$ on $S^1$ has a finite orbit;
\item
Any $C^1$ action of $G$ on $S^1$ factors through a finite group.
\end{enumerate}
\end{thm}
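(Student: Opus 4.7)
The plan is to prove the two parts separately. Part (1) is essentially a statement about bounded cohomology, while part (2) is deduced from (1) together with Thurston Stability.

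For (1), I would employ the \emph{bounded Euler class} formalism due to Ghys. To any orientation-preserving action $\rho \colon G \to \Homeo^+(S^1)$ one associates a class $e_b(\rho) \in H^2_b(G,\mathbb{Z})$, and Ghys's theorem asserts that $e_b$ is a complete invariant of $\rho$ up to semi-conjugacy. The crucial input from the higher-rank hypothesis is the Burger--Monod vanishing theorem: for a higher rank lattice $G$ in a non-split semisimple Lie group, the image of $e_b(\rho)$ in $H^2_b(G,\mathbb{R})$ vanishes, forcing $e_b(\rho)$ itself to be torsion. By Ghys's classification a torsion bounded Euler class corresponds to an action semi-conjugate to a homomorphism $G \to \mathrm{SO}(2)$ with finite image; that higher rank lattices have finite abelianization (Margulis' normal subgroup theorem, or Kazhdan's property (T)) is exactly what ensures this rotation image is finite rather than merely virtually cyclic. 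A finite rotation action on $S^1$ has finite orbits, and the degree-one monotone semi-conjugacy pushes such an orbit back to a finite $\rho$-invariant set (concretely, the endpoints of the non-degenerate preimage arcs of one rotation orbit form a finite $\rho$-invariant set).

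For (2), I would start from (1): any $C^1$ action $\rho \colon G \to \Diff^1(S^1)$ has a finite orbit $O$. Let $G' \leq G$ be the finite-index subgroup fixing $O$ pointwise. Since property (T) passes to finite-index subgroups, every $G_0 \leq G'$ satisfies $H^1(G_0,\mathbb{R}) = 0$. I claim $G'$ acts trivially on all of $S^1$, from which it follows that $\rho$ factors through the finite quotient $G/G'$, as desired. Consider a component $J$ of $S^1 \setminus O$, an open arc both of whose endpoints are fixed by $G'$. Write $a$ for one endpoint and set
\[
c := \sup\{\, t \in \overline{J} : G' \text{ fixes } [a,t] \text{ pointwise}\,\}.
\]
Thurston Stability applied at the global fixed point $a$ to the finitely generated group $G'$ says that either the germ group $G'_a$ is trivial, or $H^1(G'_a,\mathbb{R}) \neq 0$. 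Since $G'_a$ is a quotient of $G'$, a nontrivial $H^1(G'_a,\mathbb{R})$ would yield a nontrivial homomorphism $G' \to \mathbb{R}$, contradicting $H^1(G',\mathbb{R}) = 0$. Hence $G'$ acts trivially on a right-neighborhood of $a$, so $c > a$. If $c$ were interior to $\overline{J}$, then $c$ would be a global fixed point of $G'$ (as the pointwise fixed set is closed), and repeating the Thurston Stability argument at $c$ would produce a right-neighborhood of $c$ on which $G'$ acts trivially, contradicting the maximality of $c$. Therefore $c$ equals the opposite endpoint of $\overline{J}$, the subgroup $G'$ acts trivially on $\overline{J}$, and since the argument applies to every component of $S^1 \setminus O$, the whole action of $G'$ is trivial.

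The main obstacle is (1): Burger--Monod's vanishing of continuous bounded cohomology in degree two for higher rank lattices is a deep result, and matching this vanishing to the bounded Euler class of an arbitrary $C^0$ action (via Ghys's semi-conjugacy classification) is the heart of the argument. By contrast, step (2) is a fairly mechanical combination of (1), Thurston Stability, and the vanishing $H^1(G',\mathbb{R}) = 0$; the only subtlety is the continuation argument ensuring that a germ-level triviality propagates across each complementary arc.
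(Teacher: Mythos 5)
Your proposal is correct and follows essentially the same route as the paper, which itself cites Ghys and Burger--Monod for part (1) (via the bounded Euler class and the vanishing of second bounded cohomology) and only sketches the deduction of (2) from (1) using Thurston Stability together with Property $(T)$; your continuation argument across the complementary arcs just fills in the paper's ``one then shows easily'' step. The only quibble is your parenthetical claim that \emph{every} subgroup $G_0\leq G'$ has $H^1(G_0,\R)=0$ --- false for arbitrary subgroups --- but harmless, since you only ever use $H^1(G',\R)=0$ and the fact that the germ groups are quotients of $G'$.
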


The second part of Theorem~\ref{thm:lattices1} can be deduced from the first. Let $p$ be a fixed point of the $C^1$ action of a finite index subgroup $G_0<G$. Then Thurston Stability implies that the germ of the action of $G_0$ at $p$ is either trivial or admits a nontrivial homomorphism to $\R$. But $G_0$, being a lattice in higher rank, has Kazhdan's Property $(T)$ (see~\cite{ValetteBook2008}) and thus admits no such homomorphisms. One then shows easily that that $G_0$ acts by the identity.

Theorem~\ref{thm:lattices1} is nicely complemented by a result of Navas:

\begin{thm}[\cite{Navas2002ASENS}]\label{thm:navasT}
Let $G$ be a countable group with Kazhdan's Property $(T)$. Then any $C^{1+\alpha}$ action of $G$ on the circle factors through a finite group, provided $\alpha>1/2$.
\end{thm}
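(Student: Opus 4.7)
The plan is to reduce the theorem to the Delorme--Guichardet characterization of Property $(T)$: a countable group $G$ has Property $(T)$ if and only if every affine isometric action of $G$ on a Hilbert space has a global fixed point, equivalently $H^1(G,\pi)=0$ for every unitary representation $\pi$. The strategy is to manufacture, out of a putative $C^{1+\alpha}$ action of $G$ on $S^1$ with $\alpha>1/2$, an affine isometric action on a suitable Hilbert space whose fixed vectors encode $G$-invariant probability measures absolutely continuous with respect to Lebesgue, and then exploit such a measure to push the action into the rotation group.

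The analytic heart of the argument is the construction of the Hilbert space $\mathcal{H}$ and the cocycle. Following Reznikov, I would take $\mathcal{H}$ to be a fractional Sobolev space on $S^1$ (morally $H^{1/2}$, normed by the conformally natural double integral $\iint |f(x)-f(y)|^2 / |x-y|^2\, dx\, dy$), on which the given action of $G$ by $C^{1+\alpha}$ diffeomorphisms induces a unitary representation $\pi$ after an appropriate Jacobian twist. Out of the logarithmic derivative $g\mapsto \log g'$ one then defines a $1$-cocycle $b\colon G\to \mathcal{H}$. The crucial estimate is that the H\"older bound $|g'(x)-g'(y)|\leq C|x-y|^\alpha$ makes $b(g)$ genuinely lie in $\mathcal{H}$; quantitatively, this requires convergence of an integral whose integrand behaves like $|x-y|^{2\alpha}\cdot |x-y|^{-2}$, which holds exactly when $\alpha>1/2$. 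Applying the Delorme--Guichardet theorem to the resulting affine action $g\cdot \xi = \pi(g)\xi + b(g)$, Property $(T)$ forces the existence of a fixed vector $\xi_0\in\mathcal{H}$.

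Unpacking the construction, $\xi_0$ corresponds to a $G$-invariant probability measure $\mu$ on $S^1$ that is absolutely continuous with respect to Lebesgue measure, with a H\"older continuous density. After conjugating by the homeomorphism $\phi\colon S^1\to S^1$ that pushes $\mu$ forward to Lebesgue measure, the action of $G$ preserves Lebesgue; but any orientation-preserving homeomorphism of $S^1$ preserving Lebesgue measure is a rotation, so the action factors through a homomorphism $G\to SO(2)$. Since $SO(2)$ is amenable and a Property $(T)$ group has only finite amenable quotients (otherwise, pulling back the regular representation of the quotient would exhibit almost-invariant vectors without invariant ones), the image of $G$ in $SO(2)$ is finite, as required.

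The hardest part is the analytic construction outlined in the second paragraph: pinning down a Hilbert space coarse enough to admit a unitary $\Diff^{1+\alpha}(S^1)$-action yet fine enough to detect the derivative cocycle in $L^2$, and verifying the sharp integrability threshold $\alpha > 1/2$, which reflects the fundamental incompatibility between H\"older regularity and $L^2$ integrability against the conformally invariant kernel $|x-y|^{-2}$. A secondary technical point is converting the abstract fixed vector $\xi_0$ into an honest positive probability density, rather than merely a formal element of $\mathcal{H}$; this may necessitate first restricting the action to its unique minimal set and arguing separately that any non-minimal dynamics forces a finite orbit (and hence factoring through a finite group for free).
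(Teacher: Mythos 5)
The paper does not prove this theorem; it is quoted from Navas's paper \cite{Navas2002ASENS} with a bare citation, so your proposal can only be measured against the argument in the literature. Your architecture is the right one --- Delorme--Guichardet, a unitary representation attached to the conformally invariant kernel $dx\,dy/|x-y|^2$, a derivative cocycle whose square-integrability is exactly the threshold $\alpha>1/2$, and the fact that a countable Kazhdan group has no infinite amenable quotients --- but two steps, as written, do not go through.

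First, your representation and your cocycle are not compatible. The additive cocycle $g\mapsto\log g'$ satisfies the cocycle identity for the \emph{composition} action on functions on $S^1$, which is not unitary on the $H^{1/2}$-type space; once you introduce the Jacobian twist to make the action unitary, $\log g'$ is no longer a cocycle for it. This is precisely why Navas (following Reznikov and the Liouville-measure picture) works on $L^2\bigl(S^1\times S^1\setminus\Delta,\ dx\,dy/|x-y|^2\bigr)$ with the multiplicative cocycle $b(g)(x,y)=\sqrt{g'(x)g'(y)}\,|x-y|/|g(x)-g(y)|-1$, formally $\pi(g)\mathbf{1}-\mathbf{1}$; the H\"older exponent enters in checking that this lies in $L^2$ against the kernel $|x-y|^{-2}$. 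Second, and more seriously, the fixed vector $\xi_0$ does \emph{not} directly yield a $G$-invariant probability measure on $S^1$ absolutely continuous with respect to Lebesgue --- if it did, the theorem would follow in one line. What one actually obtains is an invariant Radon measure $|\mathbf{1}-\xi_0|^2\,dx\,dy/|x-y|^2$ on the space of \emph{distinct pairs}, a geodesic-current-like object whose analysis (atoms give a finite orbit; the non-atomic case requires controlling the mass near the diagonal before one can conjugate into rotations or invoke Sacksteder/H\"older-type arguments) constitutes the bulk of Navas's paper. You flag this as a ``secondary technical point,'' but it is the main point; the reduction to a finite orbit, and then from a finite orbit to a finite image (via Thurston stability and the vanishing of $H^1(G,\R)$ for Kazhdan groups), must also be carried out and is not free.
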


In light of Theorem~\ref{thm:nielsen}, it is clear that no immediate generalization of Theorem~\ref{thm:lattices1} holds for mapping class groups. Increasing the regularity of the actions, we have the following results:

\begin{thm}[Farb--Franks, Ghys (see~\cite{FF2001})]\label{thm:ffghys}
Let $S$ be a surface of genus at least $3$. Then any $C^2$ action of $\Mod(S)$ on a compact one--manifold is trivial.
\end{thm}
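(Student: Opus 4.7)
The plan is to combine Theorem~\ref{thm:p4} with the rigidity of the normal subgroup structure of $\Mod(S)$ for $g \geq 3$. After passing to the finite-index subgroup of $\Mod(S)$ preserving each component of $M$ (which still contains copies of $A(P_4)$ by Proposition~\ref{prop:firaag}), I reduce to the case of connected $M$, so $M = I$ or $M = S^1$.

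The key algebraic input is that $\Mod(S)$ is generated by finitely many Dehn twists about non-separating simple closed curves, all of which form a single conjugacy class in $\Mod(S)$ (by Humphries and Lickorish). Consequently, any homomorphism $\phi\colon \Mod(S) \to \Diff^2(M)$ satisfying $\phi(T_c) = 1$ for even a single non-separating $c$ must be trivial; equivalently, if $\phi$ is nontrivial, then $\phi(T_c^N) \neq 1$ for every non-separating simple closed curve $c$ and every $N \geq 1$ (recalling that $T_c$ has infinite order).

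Since $g \geq 3$ gives $c(S) = 3g - 3 \geq 6$, I can choose four non-separating simple closed curves $a, b, c, d$ on $S$ whose intersection pattern in $\CC(S)$ realizes a copy of $P_4$; by Theorem~\ref{thm:raagmcg}, for all sufficiently large $N$ the subgroup $H = \langle T_a^N, T_b^N, T_c^N, T_d^N \rangle$ is isomorphic to $A(P_4)$. Composing $H \hookrightarrow \Mod(S)$ with a putative nontrivial $\phi$ and applying Theorem~\ref{thm:p4}, the restriction $\phi|_H \colon A(P_4) \to \Diff^2(M)$ cannot be injective, so it has some nontrivial kernel element $w$; by the previous step, no single generator $T_v^N$ can lie in this kernel.

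The main obstacle is to upgrade this isolated kernel element to the statement that some non-separating Dehn twist actually lies in $\ker \phi$. My strategy is to invoke the change-of-coordinates principle for mapping class groups, so that the $\Mod(S)$-conjugates of $w$ generate a nontrivial normal subgroup $K \trianglelefteq \Mod(S)$ contained in $\ker \phi$. A normal-subgroup rigidity result for $\Mod(S)$ in genus at least three (in the spirit of Ivanov's theorem that any nontrivial normal subgroup of $\Mod(S)$ contains a pure mapping class of nontrivial Nielsen--Thurston type) should then force $K$ to contain a non-separating Dehn twist, whereupon the first step kills $\phi$ entirely. Making this final deduction precise --- in particular, identifying exactly which normal subgroup of $\Mod(S)$ is generated by the $\Mod(S)$-orbit of a nontrivial $A(P_4)$-word in powers of Dehn twists --- is the technical heart of the proof.
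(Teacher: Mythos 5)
Your proposal has a genuine gap, and it sits exactly where you locate it: the passage from ``some nontrivial element $w$ of the $A(P_4)$ subgroup lies in $\ker\phi$'' to ``$\phi$ is trivial.'' Two problems. First, your ``equivalently'' is not an equivalence: knowing that $\phi(T_c)=1$ for a single non-separating $c$ kills $\phi$ (all such twists are conjugate and generate), but $\phi(T_c^N)=1$ does not, because the normal closure of $T_c^N$ in $\Mod(S)$ is a \emph{proper} normal subgroup for large $N$ (by Dahmani--Ghys--Osin it is even free and does not contain any Dehn twist), so a homomorphism can kill $T_c^N$ without being trivial. Second, and more fatally, the normal-subgroup rigidity you hope to invoke is false in the generality you need: $\Mod(S)$ has many nontrivial normal subgroups containing no non-separating Dehn twist and indeed no Dehn twist at all --- the Torelli group, the Johnson kernel, the higher Johnson filtration terms $\mathcal{J}_k(S)$, and purely pseudo-Anosov normal closures. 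So the normal closure $K$ of your kernel element $w$ need not contain any twist, and the argument cannot close. This is not an accident: Theorem~\ref{thm:p4} only obstructs \emph{faithfulness}, and the paper uses it to prove exactly the weaker conclusion that no finite index subgroup of $\Mod(S)$ embeds in $\Diff^2(M)$ (Theorem~\ref{thm:virtualmcg}), explicitly contrasting this with Theorem~\ref{thm:ffghys}, which ``requires the full mapping class group.''

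The actual Farb--Franks/Ghys argument sketched in the paper is different in kind. It uses the braid relation $T_aT_bT_a=T_bT_aT_b$ satisfied by Dehn twists about curves intersecting once; Kopell's Lemma forces the images of such twists under a $C^2$ action to commute, and a braid relation between commuting elements forces the images to be equal. Propagating this over a generating set of non-separating twists (all conjugate, all linked by such relations) shows the image of $\Mod(S)$ is abelian, and then the triviality of $H_1(\Mod(S);\Z)$ in genus at least $3$ finishes. Note that this argument genuinely needs the relations of the full group --- braid relations do not survive in finite index subgroups --- which is precisely why the RAAG route you chose can only yield non-faithfulness statements. If you want to keep your framework, the honest conclusion it delivers is Theorem~\ref{thm:virtualmcg}(1), not Theorem~\ref{thm:ffghys}.
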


Theorem~\ref{thm:ffghys} was generalized by Parwani~\cite{Parwani2008} to $C^1$ actions:

\begin{thm}\label{thm:parwani}
Let $S$ be a surface of genus at least $6$. Then any $C^1$ action of $\Mod(S)$ on a compact one--manifold is trivial.
\end{thm}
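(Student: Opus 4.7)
The plan is to split the proof into two steps: first, produce a global fixed point of the action $\rho\colon\Mod(S)\to\Diff^1(M)$, and second, use Thurston Stability to propagate triviality from a neighborhood of this fixed point to all of $M$. Since $\rho$ takes values in orientation-preserving diffeomorphisms and $\Mod(S)$ is perfect for $g\geq 3$ (Powell's theorem), any permutation of the connected components of $M$ induced by the action is trivial, so we may assume $M$ is connected, i.e., either $M=I$ or $M=S^1$.

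For the propagation step (which is the easier step), suppose $p\in M$ is a global fixed point, and let $\Mod(S)_p$ denote the germ of $\rho(\Mod(S))$ at $p$. By Thurston Stability, either $\Mod(S)_p$ is trivial or $H^1(\Mod(S)_p,\R)\neq 0$. But $\Mod(S)_p$ is a quotient of $\Mod(S)$, hence perfect, and in particular has trivial first cohomology with real coefficients. Thus $\Mod(S)_p$ is trivial, and some open neighborhood of $p$ is pointwise fixed by all of $\rho(\Mod(S))$. The set of global fixed points of $\rho(\Mod(S))$ is therefore open (by this argument applied at each such point) and closed (as the intersection of the closed sets $\Fix\rho(g)$), hence, by connectedness of $M$, equal to all of $M$. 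This shows $\rho$ is trivial.

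For the fixed-point step, the case $M=I$ is free: both endpoints of $I$ are fixed by any orientation-preserving diffeomorphism. The case $M=S^1$ is the main difficulty. The plan here is to exploit the rotation number $\mathrm{rot}\colon\Homeo^+(S^1)\to\R/\Z$, a homogeneous quasi-morphism restricting to a homomorphism on every amenable subgroup. Using the lantern relation, the chain relation, and the abundance of commutator relations among Dehn twists available once the genus is at least $6$, one derives linear constraints on the rotation numbers of the Dehn twist images; since conjugate Dehn twists share the same rotation number and $\Mod(S)$ is perfect, these constraints force every Dehn twist to have rotation number zero and hence a genuine fixed point in $S^1$. A further argument, using that the fixed-point sets of Dehn twists about disjoint curves must meet and that Dehn twists about nonseparating curves normally generate $\Mod(S)$, produces a common fixed point of the whole image $\rho(\Mod(S))$.

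The main obstacle is this last construction, namely producing a common fixed point in the case $M=S^1$; the hypothesis $g\geq 6$ enters precisely here, to ensure enough disjoint subsurfaces of sufficient complexity to supply the lantern- and chain-type Dehn twist relations needed to control the rotation numbers simultaneously.
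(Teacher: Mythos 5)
This is a result the survey quotes from Parwani~\cite{Parwani2008} without proof; the surrounding text only indicates that the dynamical engine is Thurston Stability together with the vanishing of $H^1(\Mod(S),\R)$. Measured against that, roughly half of your outline is right and standard. The propagation step (Thurston Stability applied to the germ at a global fixed point, which is a finitely generated perfect quotient, followed by the open--closed argument) is correct and is exactly the mechanism the paper alludes to, and it disposes of $M=I$ completely. Your lantern-relation computation is also correct: $\mathrm{rot}$ is conjugation-invariant and additive on the abelian subgroups generated by twists about disjoint curves, and a lantern with all seven curves nonseparating gives $4r=3r$ in $\R/\Z$, so every nonseparating twist has rotation number $0$ and hence a fixed point. (Two small slips: perfectness does not force the permutation of the components of $M$ to be trivial, since $\Mod(S)$ has nonabelian finite simple quotients; the statement should be read with $M$ connected. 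Also the chain relations play no role in this step.)

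The genuine gap is the passage from ``every Dehn twist has a fixed point'' to ``the whole image has a global fixed point.'' Everything you establish up to that point is purely $C^0$, and Nielsen's action (Theorem~\ref{thm:nielsen}) is a faithful, minimal $C^0$ action on $S^1$ in which every Dehn twist does fix points (the endpoints of lifts of its curve) and commuting twists have intersecting fixed sets, yet there is no global fixed point and no finite orbit. So no continuation along the lines you sketch --- pairwise intersections of fixed sets plus normal generation --- can succeed without re-injecting the $C^1$ hypothesis, and in any case pairwise nonempty intersections of the infinitely many, mostly non-commuting fixed sets do not produce a common point. This missing step is where the real content of Parwani's argument lies: the hypothesis $g\ge 6$ is there not to supply lantern relations (genus $3$ already does that) but to split off two disjoint genus-$3$ subsurfaces whose mapping class groups are perfect and commute with each other; the key lemma concerns $C^1$ actions on the circle of products $G_1\times G_2$ of finitely generated groups with $H^1(G_i;\R)=0$, proved via Thurston Stability, and it forces one factor to act with a global fixed point and hence (by your propagation step) trivially. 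Normal generation of $\Mod(S)$ by the conjugates of a twist supported in that subsurface then finishes the proof.
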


As might be expected, the main dynamical tool used to prove Theorem~\ref{thm:ffghys} is Kopell's Lemma, and for Theorem~\ref{thm:parwani} it is Thurston Stability. In both of these results, the essential facts about mapping class groups that are used are the fact that Dehn twists about curves intersecting once satisfy braid relations, and that sufficiently complicated mapping class groups have trivial abelianization. Kopell's Lemma can be used to show that braid relations collapse under $C^2$ actions of sufficiently complicated mapping class groups, and the lack of nontrivial homomorphisms from mapping class groups to $\R$ allows one to apply Thurston Stability.

Theorems~\ref{thm:ffghys} and~\ref{thm:parwani} require the full mapping class group, whereas a lattice in a Lie group is only defined up to commensurability. Braid relations disappear in finite index subgroups of mapping class groups, and the existence or non--existence of nontrivial homomorphisms to $\R$ for finite index subgroups of mapping class groups is a well--known open problem. However, using right-angled Artin groups (and precisely Theorem~\ref{thm:p4}), one can prove analogues of Theorem~\ref{thm:lattices1} for mapping class groups and other groups:

\begin{thm}\label{thm:virtualmcg}
Let $M$ be a compact one--manifold. No finite index subgroup of the following groups admits an injective homomorphism to $\Diff^2(M)$:
\begin{enumerate}
\item
The mapping class group $\Mod(S)$ for $c(S)\geq 2$;
\item
The braid group $B_n$ for $n\geq 4$;
\item
The Torelli group and Johnson kernel in genus at least $3$;
\item
Higher terms of the Johnson filtration in genus at least $5$;
\item
The groups $\Aut(F_n)$ and $\Out(F_n)$ for $n\geq 3$;
\item
The group $\SL_n(\Z)$ for $n\geq 8$.
\end{enumerate}
\end{thm}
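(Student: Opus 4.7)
The plan is to reduce everything to Theorem~\ref{thm:p4}, which asserts that $A(P_4)$ admits no faithful $C^2$ action on a compact one--manifold. The strategy has three ingredients: (i) verify that each of the six listed groups contains a copy of $A(P_4)$; (ii) use Proposition~\ref{prop:firaag} to push $A(P_4)$ down into every finite index subgroup; (iii) invoke Theorem~\ref{thm:p4} to rule out any faithful map of such a finite index subgroup into $\Diff^2(M)$.

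For step (i), all of the necessary embeddings are already recorded earlier in the paper. Corollary~\ref{cor:mcgp4} gives $A(P_4)<\Mod(S)$ whenever $c(S)\geq 2$, which covers item (1). For item (2), I would identify $B_n$ with the mapping class group of a disk with $n$ punctures; the complexity invariant for this surface is $n-2$, so $n\geq 4$ puts us in the regime $c\geq 2$ and Corollary~\ref{cor:mcgp4} again applies. Items (3), (4), (5), and (6) follow respectively from Propositions~\ref{prop:torelli}, \ref{prop:johnson}, \ref{prop:autfn}, and~\ref{prop:sln}, each of which was established earlier precisely to feed into this type of argument.

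For step (ii), let $G$ be any of the groups listed in items (1)--(6) and let $H<G$ be a finite index subgroup. Since $A(P_4)<G$ by step (i), Proposition~\ref{prop:firaag} yields $A(P_4)<H$. For step (iii), suppose toward contradiction that there is an injection $\phi\co H\to\Diff^2(M)$. Restricting $\phi$ to the subgroup $A(P_4)<H$ gives a faithful homomorphism $A(P_4)\to\Diff^2(M)$, directly contradicting Theorem~\ref{thm:p4}. This contradiction establishes the theorem.

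There is no genuine obstacle; the theorem is essentially a harvest of the preceding machinery. The only place where one must exercise care is item (2), since Theorem~\ref{thm:raagbraid} only asserts the existence of \emph{some} $n$ for which $A(P_4)$ embeds in $B_n$, whereas we need the sharp statement $n\geq 4$. This is why I would argue through the mapping class group description of $B_n$ and Corollary~\ref{cor:mcgp4} rather than through Theorem~\ref{thm:raagbraid} directly: the complexity bound $c=n-2\geq 2$ is exactly the hypothesis $n\geq 4$ appearing in the statement.
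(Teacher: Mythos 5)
Your proposal is correct and is exactly the argument the paper intends: combine the $A(P_4)$--embeddings from Corollary~\ref{cor:mcgp4} and Propositions~\ref{prop:torelli}, \ref{prop:johnson}, \ref{prop:autfn}, and~\ref{prop:sln} with Proposition~\ref{prop:firaag} to pass to finite index subgroups, and then apply Theorem~\ref{thm:p4}. Your care with item (2), going through $B_n\cong\Mod(D_n)$ and the count $c(D_n)=n-2\geq 2$ rather than through Theorem~\ref{thm:raagbraid}, is precisely the right way to get the sharp bound $n\geq 4$.
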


Of course the conclusion of Theorem~\ref{thm:virtualmcg} for $\SL_n(\Z)$ is already implied by Theorem~\ref{thm:lattices1}. In fact, Witte--Morris~\cite{Witte1994} has proved that for $n\geq 3$, the group $\SL_n(\Z)$ is not virtually left orderable, so that any $C^0$ action on the circle already factors through a finite group.

The action of the entire group $\Aut(F_n)$ or $\Out(F_n)$ on the circle always factors through a finite group, as was shown in~\cite{BridsonVogtmann2003}.

As for mapping class groups and $C^0$ actions, it is easy to see that if $S$ is closed then $\Mod(S)$ generally contains noncyclic finite groups and hence admits no faithful action on the circle. If $S$ has a marked point then $\Mod(S)$ still has torsion and hence admits no faithful action on $\R$. Since mapping class groups are virtually torsion--free, these obstructions vanish after passing to a finite index subgroup. The following question is one of the most well--known open questions in orderable groups:

\begin{que}
Let $S$ be a closed surface. Does there exist a finite index subgroup $G<\Mod(S)$ and an injective homomorphism $G\to\Homeo^+(\R)$?
\end{que}

We remark briefly that mapping class groups of surfaces with nonempty boundary do admit faithful actions on $\R$~\cite{HT1985,CB1988}.

\section{Braid groups and virtual specialness}

An important and often very difficult question in CAT(0) geometry is to determine whether or not a particular group or class of groups can be the fundamental group of a locally CAT(0) space, i.e. if they are \emph{CAT(0) groups}. Some of the first progress towards determining whether (general) Artin groups can be CAT(0) was made by Deligne~\cite{Deligne1972} and later by Charney--Davis~\cite{CharneyDavis1995}. Most mapping class groups cannot be CAT(0) groups, since this would violate the Flat Torus Theorem (cf. Subsections~\ref{subsec:solvable} and~\ref{subsec:nonex}). Braid groups do not have the obstructions to being CAT(0) groups that mapping class groups do, and in fact CAT(0) structures for braid groups have been found for up to 6 strands (see~\cite{HaettelKielak2016} for the most recent advance).

CAT(0) structures on braid groups as they have been produced are generally not virtual special cubulations, and in general we do not know the following:

\begin{que}\label{que:vspecial}
Are braid groups virtually special? Do braid groups embed as subgroups of right-angled Artin groups?
\end{que}

Question~\ref{que:vspecial} has attracted some attention recently and has been partially answered. Huang--Jankiewicz--Przytycki~\cite{HJP2016} give an if and only if condition for the existence of a cocompact cubulation of a two--dimensional Artin group. In the most recent version of his preprint, Haettel~\cite{Haettel2015} claims a complete classification of virtual cocompact cubulations of Artin groups, in particular showing that $B_n$ is not virtually cocompactly cubulated whenever $n\geq 4$.

A potential approach to resolving Question~\ref{que:vspecial} is through actions on one--manifolds:

\begin{que}\label{que:vbraid}
Let $G<B_n$ be a finite index subgroup, where $n\geq 4$. Does there exist an injection $G\to\Diff^{\infty}(\R)$?
\end{que}

\begin{prop}
Suppose the answer to Question~\ref{que:vbraid} is no. Then braid groups are not virtually special.
\end{prop}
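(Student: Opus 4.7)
The plan is to prove the contrapositive. Suppose, for a contradiction, that braid groups are virtually special. Then for some $n \geq 4$, the group $B_n$ contains a finite index subgroup $G$ which is itself special. By definition, $G = \pi_1(X)$ for a locally $\CAT(0)$ cube complex $X$ whose hyperplanes satisfy the four avoidance conditions of Subsection~\ref{subsec:vspecial}. By Theorem~\ref{thm:wise}, $X$ admits a local isometry to the Salvetti complex $S(\gam)$ for some finite simplicial graph $\gam$. Since a local isometry of cube complexes induces an inclusion on fundamental groups, this yields an injective homomorphism $G \hookrightarrow \pi_1(S(\gam)) \cong A(\gam)$.

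Next, I would invoke Theorem~\ref{thm:raagr}, which furnishes an injective homomorphism $A(\gam) \hookrightarrow \Diff^{\infty}(\R)$. Composing with the embedding $G \hookrightarrow A(\gam)$ produced above, one obtains an injection $G \hookrightarrow \Diff^{\infty}(\R)$ whose source is a finite index subgroup of $B_n$. This is precisely an affirmative answer to Question~\ref{que:vbraid}, contradicting the standing hypothesis that the answer is negative, and thereby completing the proof.

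The argument involves no substantive obstacle: it is a two-step composition of the characterization of special groups as subgroups of right-angled Artin groups (Theorem~\ref{thm:wise}) with the smooth embedding of right-angled Artin groups into $\Diff^{\infty}(\R)$ (Theorem~\ref{thm:raagr}). The only point requiring a moment's thought is to read Question~\ref{que:vbraid} as asking for the existence of some finite index subgroup admitting such an injection, which is exactly what virtual specialness furnishes. If one instead insists on the stronger reading that every finite index subgroup should embed, the contrapositive still goes through after a small additional observation: for any finite index $G \leq B_n$, the intersection $G \cap G_0$ with the special finite index subgroup $G_0$ is itself a subgroup of $A(\gam)$ and hence of $\Diff^{\infty}(\R)$, so any counterexample $G$ to the question must already differ in an essential way from the special subgroup supplied by virtual specialness.
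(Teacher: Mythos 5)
Your argument is correct and is essentially the paper's proof: virtual specialness puts a finite index subgroup of $B_n$ inside some $A(\gam)$ (you just make the appeal to Theorem~\ref{thm:wise} explicit where the paper takes it as read), and Theorem~\ref{thm:raagr} then embeds that subgroup in $\Diff^{\infty}(\R)$, contradicting a negative answer to Question~\ref{que:vbraid}. Your closing aside about the ``every finite index subgroup'' reading is inessential and does not actually close that case (knowing $G\cap G_0$ embeds does not by itself embed $G$), but the reading the paper intends is the existential one, for which your main argument suffices.
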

\begin{proof}
Suppose that $B_n$ is virtually special. Then some finite index subgroup $G<B_n$ can be embedded as a subgroup of a right-angled Artin group $A(\gam)$. By Theorem~\ref{thm:raagr}, we have that $A(\gam)$ occurs as a subgroup of $\Diff^{\infty}(\R)$, a contradiction.
\end{proof}

\section{Higher dimensional actions}

In this final section, we consider the problem of finding right-angled Artin groups inside of diffeomorphism groups of manifolds in dimension at least two.

\begin{thm}\label{thm:surfaceraag}
Let $S$ be a two--manifold and let $\gam$ be a finite simplicial graph. Then $A(\gam)<\Diff^{\infty}(S)$.
\end{thm}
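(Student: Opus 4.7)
The plan is to import the algebraic embedding into a braid group furnished by Theorem~\ref{thm:raagbraid} and realize it by honest diffeomorphisms supported in a smoothly embedded disk in $S$. By Theorem~\ref{thm:raagbraid} there is an injection $A(\gam)\hookrightarrow B_n$ for some $n\gg 0$. Unpacking its construction (Theorems~\ref{thm:crispwiest} and~\ref{thm:antitree}, both ultimately resting on Theorem~\ref{thm:raagmcg}), this injection has the concrete form $v\mapsto T_{\gamma_v}^N$ for some $N\gg 0$, where $\{\gamma_v\}_{v\in V(\gam)}$ is a family of essential simple closed curves on a marked disk $(D,\{p_1,\ldots,p_n\})$ satisfying $\gamma_v\cap\gamma_w=\varnothing$ if and only if $\{v,w\}\in E(\gam)$.

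Since $S$ is two-dimensional, I would smoothly embed the marked disk $(D,\{p_1,\ldots,p_n\})$ into $S$ as a closed subdisk with $n$ distinguished interior points. For each $v\in V(\gam)$, choose a representative diffeomorphism $\tau_v$ of the mapping class $T_{\gamma_v}^N$ that is supported in a small tubular neighborhood $\nu(\gamma_v)$ of $\gamma_v$, chosen to avoid $\partial D$ and every marked point; for each edge $\{v,w\}\in E(\gam)$, use the disjointness of $\gamma_v$ and $\gamma_w$ to arrange $\nu(\gamma_v)\cap\nu(\gamma_w)=\varnothing$. Extending each $\tau_v$ by the identity on $S\setminus D$ produces a smooth diffeomorphism of $S$. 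For commuting pairs one has $\supp\tau_v\cap\supp\tau_w=\varnothing$, so $[\tau_v,\tau_w]=1$ already in $\Diff^\infty(S)$, and hence $v\mapsto\tau_v$ extends to a homomorphism $\Phi\colon A(\gam)\to\Diff^\infty(S)$.

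Injectivity follows by post-composing $\Phi$ with the natural map $\Diff^\infty(S)\to\Mod(D,\{p_1,\ldots,p_n\})=B_n$ which restricts any diffeomorphism preserving $D$ and each marked point to $D$ and takes its mapping class. By construction this composite equals the Koberda embedding $v\mapsto T_{\gamma_v}^N$ of Theorem~\ref{thm:raagmcg}, which is injective; hence $\Phi$ is injective. The only technical point in this argument, namely arranging the $\tau_v$ to be compactly supported in $D^{\circ}$, identity near $\partial D$ and near each marked point, and disjointly supported for commuting vertices, is routine given that the underlying curves $\gamma_v$ are already disjoint for commuting pairs; all of the genuine difficulty has been absorbed into the existing Theorems~\ref{thm:raagbraid} and~\ref{thm:raagmcg}.
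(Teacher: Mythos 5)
Your argument is essentially the paper's own proof: reduce to the planar-complement braid picture, realize the generators by diffeomorphisms supported in an embedded marked disk with pairwise disjoint supports for commuting pairs, and get injectivity by restricting back to the mapping class group of that disk, where the composite is the known injective embedding of Theorem~\ref{thm:raagmcg}. One small correction: for general $\gam$ the embedding $A(\gam)\to B_n$ of Theorem~\ref{thm:raagbraid} sends a vertex $v$ to a \emph{multitwist} $\prod_{x\in p^{-1}(v)\cap T_N}T_{\gamma_x}^N$ rather than a single twist $T_{\gamma_v}^N$; this is harmless, since adjacency in $T_N$ projects to adjacency in $\gam^c$, so the multicurves attached to commuting vertices of $\gam$ are still disjoint, and the paper sidesteps the issue entirely by first replacing $A(\gam)$ with the overgroup $A(T_N^c)$, whose generators are honest single twists.
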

\begin{proof}
Let $\gam$ be given. By Theorem~\ref{thm:antitree}, there is a planar graph $T$ such that $A(\gam)<A(T^c)$, so it suffices to prove the result for graphs whose complements are planar.

So, fix a planar graph $T$ and an open disk $D\subset S$, and choose disks $D_v\subset D$ for each vertex of $T$ such that $D_v\cap D_w=\varnothing$ if and only if $\{v,w\}\notin E(T)$. Choose some number of marked points on $D$ and mapping classes $\{\psi_v\}_{v\in V(T)}$ as in the proof of Theorem~\ref{thm:crispwiest}, so that these mapping classes generate a copy of $A(T^c)$. We lift each $\psi_v$ to a $C^{\infty}$ diffeomorphism $\Psi_v$ supported on $D_v$. We claim that $\langle\Psi_v\mid v\in V(T)\rangle\cong A(T^c)$.

First note that there is a natural map \[f\colon A(T^c)\to\langle\Psi_v\mid v\in V(T)\rangle<\Diff^{\infty}(D)\] given by $v\mapsto \Psi_v$. Moreover, there is a natural surjection $g\colon \langle\Psi_v\mid v\in V(T)\rangle\to A(T^c)$ given by $\Psi_v\mapsto \psi_v$. Observe that the composition $g\circ f$ of these two maps is manifestly an isomorphism, whence $f$ is an isomorphism.
\end{proof}

It is not difficult to see that Theorem~\ref{thm:surfaceraag} can be improved to manifolds of arbitrary dimension $\geq 2$, by embedding a given right-angled Artin group in the group of $C^{\infty}$ diffeomorphisms of a small ball.

For a surface (or for higher dimensional manifolds), one may insist on group actions that preserve more structure than just a $C^{\infty}$ structure. For instance, if $(M,\omega)$ is a symplectic manifold, one may ask if a given group can be realized as a group of (Hamiltonian) symplectomorphisms of $M$. In this direction we have the following improvement of Theorem~\ref{thm:surfaceraag} due to M. Kapovich:

\begin{thm}[\cite{Kapovich2012}]\label{thm:raagsinham}
Let $\gam$ be a finite simplicial graph and let $(M,\omega)$ be a symplectic manifold. Then $A(\gam)$ embeds in the group of Hamiltonian symplectomorphisms of $M$.
\end{thm}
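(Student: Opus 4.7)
The plan is to adapt the proof of Theorem~\ref{thm:surfaceraag}, using Hamiltonian flows in a Darboux ball in place of disk-supported diffeomorphisms. By Theorem~\ref{thm:antitree}, $A(\gam)$ embeds in $A(T^c)$ for some finite tree $T$, so it suffices to treat right-angled Artin groups on complements of planar graphs. Fix a Darboux ball $B \subset M$; any compactly supported Hamiltonian symplectomorphism of $B$ extends by the identity to a Hamiltonian symplectomorphism of $M$ (extend the generating Hamiltonian by zero), so it is enough to produce an injection $A(T^c) \hookrightarrow \operatorname{Ham}_c(B)$.

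Since $A(T^c)$ is countable, I would use a residual strategy. Enumerate the non-identity elements $\{w_n\}_{n \ge 1}$ of $A(T^c)$, pick pairwise disjoint open sub-balls $B_n \subset B$, and construct local homomorphisms $\rho_n \colon A(T^c) \to \operatorname{Ham}_c(B_n)$ with $\rho_n(w_n) \ne 1$. The global homomorphism $\rho(g) := \prod_n \rho_n(g)$, generated by a locally finite sum $H_v := \sum_n H^n_v$ of Hamiltonians, is then faithful because $\rho(w_n)$ restricts to $\rho_n(w_n) \ne 1$ on $B_n$.

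To build each $\rho_n$, I would symplectically lift the proof of Theorem~\ref{thm:raaghomeo}. Write $w_n = u_k \cdots u_1$ in left greedy normal form (Proposition~\ref{prop:leftgreedyexist}) and choose representatives $v_i \in \supp u_i$ with $[v_i, v_{i+1}] \ne 1$ (Proposition~\ref{prop:leftgreedy}). Pick a chain of open symplectic sub-balls $J^n_1, \dots, J^n_k \subset B_n$ whose consecutive pairs overlap in a ball while non-consecutive pairs are disjoint. Since $\operatorname{Ham}_c$ of a connected symplectic ball acts transitively on its points, I can select $\phi^n_i \in \operatorname{Ham}_c(J^n_i)$ carrying a basepoint $x_1 \in J^n_1 \setminus J^n_2$ along the chain via $x_{i+1} := \phi^n_i(x_i) \in J^n_{i+1}$, ending at $x_{k+1} \ne x_1$. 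Setting $\rho_n(v) := \prod_{v_i = v} (\phi^n_i)^{\epsilon_i}$, with $\epsilon_i$ the sign of $v_i$ in $u_i$, yields a well-defined homomorphism: the left greedy condition forbids vertices $v, w$ with $[v, w] = 1$ from occupying adjacent indices in the sequence $(v_i)$, so the supports of $\rho_n(v)$ and $\rho_n(w)$ are disjoint and commute automatically, while $\rho_n(w_n)(x_1) = x_{k+1} \ne x_1$ witnesses non-triviality.

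The main obstacle I anticipate is analytic rather than algebraic: when $M$ is compact, the disjoint sub-balls $B_n$ must accumulate, and one must arrange for the global Hamiltonian $H_v = \sum_n H^n_v$ to be smooth at the accumulation locus. I would handle this by simultaneously shrinking the chain $\{J^n_i\}$ inside $B_n$ and slowing each flow by a time reparametrization of its Hamiltonian (which preserves the time-$1$ displacement), arranging $\|H^n_v\|_{C^n}$ to decay faster than any polynomial in $n$. The algebraic content of faithfulness, in which the combinatorial machinery of left greedy normal forms is used, then transfers from the one-dimensional case essentially verbatim.
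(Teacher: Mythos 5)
The survey does not actually prove Theorem~\ref{thm:raagsinham}; it only cites Kapovich, whose published argument arranges the supports of the generators according to the non-edges of $\gam$ and verifies faithfulness by a different criterion. Your route --- transplanting the residual, left-greedy-normal-form construction of Theorems~\ref{thm:raaghomeo} and~\ref{thm:raagr} into a nested family of sub-balls of a single Darboux ball --- is a genuinely different and, as far as I can see, viable alternative. Its main virtue is that the Hamiltonian property comes almost for free: every map in sight is the time-one flow of a compactly supported function in a Darboux chart, extended by zero, and the countably many disjointly supported pieces assemble into a single autonomous Hamiltonian per generator. Two remarks on the structure: the initial reduction via Theorem~\ref{thm:antitree} is unnecessary (planarity of $T$ plays no role in a chain-of-balls construction, which works for arbitrary $\gam$ directly), and the well-definedness argument you give is exactly right --- commuting generators never occupy adjacent indices in the sequence $(v_i)$, so their supports are disjoint.

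Two points deserve more care than ``transfers essentially verbatim.'' First, the one-dimensional orbit computation in Theorem~\ref{thm:raaghomeo} leans on monotonicity (increasing intervals, increasing homeomorphisms), which has no symplectic analogue. What saves you is a combinatorial fact you should make explicit: left greedy normal form forces $\supp u_i\cap\supp u_{i+1}=\varnothing$ (a common vertex would commute with all of $\supp u_{i+1}$, violating the defining condition), so when the orbit point sits in $J^n_{i-1}\cap J^n_i$ the only factor of $\rho_n(u_i)$ whose support contains it is $(\phi^n_i)^{\pm}$; the induction then closes with $x_{k+1}\in J^n_k\setminus J^n_{k-1}$ and no monotonicity is needed. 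Second, your proposed fix for smoothness at the accumulation locus is half right: a time reparametrization that preserves the time-one map cannot reduce any $C^k$ norm (the reparametrized Hamiltonian satisfies $\sup_t\|\widetilde H_t\|\geq\|H\|$ since the derivative of the reparametrization averages to one), so ``slowing the flow'' does nothing. The mechanism that actually works is the other half of your sentence: shrink the displacements $|x_{i+1}-x_i|=\delta_n$ much faster than the radii $s_n$ of the supporting balls $J^n_i$, e.g. $\delta_n\leq 2^{-n}s_n^{\,n}$. Rescaling a fixed profile shows that both $\|\rho_n(v)-\mathrm{id}\|_{C^k}$ and $\|H^n_v\|_{C^k}$ are then $O(\delta_n/s_n^k)$, so the generators converge to the identity in $C^\infty$ and the summed Hamiltonians are smooth and flat at the accumulation point. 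Since faithfulness only requires the displacement to be nonzero, not of definite size, there is no analogue here of the Kopell/Mean Value obstruction that blocks this strategy on compact one--manifolds, and the argument goes through.
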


\section*{Acknowledgements}
The author is partially supported by Simons Foundation Collaboration Grant number 429836. The author is grateful to the referee for pointing out several references and for noting several corrections, and for generally improving the manuscript.

%%%%%%%%%%%%%%%%%%%%%%%%%%%%%%%%%%%%%%%%%%%%%%%%%%%%%%%%%%%%%%%

%%%%%%%%%%%%%%%%%%%%%%%%%%%%%%%%%%%%%%%%%%%%%%%%%%%%%%%%%%%%%%%

%%%%%%%%%%%%%%%%%%%%%%%%%%%%%%%%%%%%%%%%%%%%%%%%%%%%%%%%%%%%%%

%%%%%%%%%%%%%%%%%%%%%%%%%%%
% END of body
%%%%%%%%%%%%%%%%%%%%%%%%%%%

\bibliographystyle{amsplain}
\bibliography{ref}

\end{document}